\def\clap#1{\hbox to 0pt{\hss#1\hss}}
\DeclareFontFamily{OMS}{rsfs}{\skewchar\font'60}
\DeclareFontShape{OMS}{rsfs}{m}{n}{<-5>rsfs5 <5-7>rsfs7 <7->rsfs10 }{}
\DeclareSymbolFont{rsfs}{OMS}{rsfs}{m}{n}
\DeclareSymbolFontAlphabet{\scr}{rsfs}
 \newcommand{\C}{\mathbb{C}}
\renewcommand{\O}{\sO}
\newcommand{\sA}{\scr{A}}
\newcommand{\sE}{\scr{E}}
\newcommand{\sF}{\scr{F}}
\newcommand{\sG}{\scr{G}}
\newcommand{\sI}{\scr{I}}
\newcommand{\sL}{\scr{L}}
\newcommand{\sO}{\scr{O}}
\newcommand{\sQ}{\scr{Q}}
\newcommand{\sS}{\scr{S}}
\newcommand{\sT}{\scr{T}}
\newcommand{\bC}{\mathbb{C}}
\newcommand{\bN}{\mathbb{N}}
\newcommand{\bQ}{\mathbb{Q}}
\DeclareMathOperator{\Alb}{Alb}
\DeclareMathOperator{\complete}{complete}
\DeclareMathOperator{\restX}{rest}
\DeclareMathOperator{\codim}{codim}
\DeclareMathOperator{\Hom}{Hom}
\DeclareMathOperator{\Id}{Id}
\DeclareMathOperator{\Image}{Image}
\DeclareMathOperator{\Pic}{Pic}
\DeclareMathOperator{\rank}{rank}
\DeclareMathOperator{\reg}{reg}
\DeclareMathOperator{\Sym}{Sym}
\DeclareMathOperator{\supp}{supp}
\DeclareMathOperator{\Aut}{Aut}
\newcommand{\into}{\hookrightarrow}
\newcommand{\wtilde}{\widetilde}
\newcommand{\what}{\widehat}
\newcounter{thisthm}
\newcommand{\iref}[1]{(\thesection.\the\value{thisthm}.\the\value{#1})}
\theoremstyle{plain}    
\newtheorem{thm}{Theorem}[section]
\numberwithin{equation}{thm}
\numberwithin{figure}{section}
\theoremstyle{plain}    
\newtheorem{cor}[thm]{Corollary}
\newtheorem{lem}[thm]{Lemma}
\newtheorem{problem}[thm]{Problem}
\theoremstyle{plain}    
\newtheorem{prop}[thm]{Proposition}
\newtheorem{proclaim-special}[thm]{\specialthmname}
\theoremstyle{remark}
\newtheorem{setup}[thm]{Setup} 
\newtheorem{ex}[thm]{Example}
\newtheorem{defn}[thm]{Definition}
\newtheorem{rem}[thm]{Remark}
\newtheorem{obs}[thm]{Observation}
\newtheorem{claim}[thm]{Claim} 
\newtheorem*{claim*}{Claim} 
\newtheorem{notation}[thm]{Notation}
\newtheorem{conclusion}[thm]{Conclusion}
\newtheoremstyle{bozont-remark}{3pt}{3pt}%
     {}
     {}
     {\it}
     {.}
     {.5em}
     {\thmname{#1}\thmnumber{ #2}: \thmnote{\sc #3}}
\theoremstyle{bozont-remark}
\def\factor#1.#2.{\left. \raise 2pt\hbox{$#1$} \right/\hskip -2pt\raise
  -2pt\hbox{$#2$}} 
\newlength{\swidth}
\newenvironment{enumerate-p}{
  \begin{enumerate}}
  {\setcounter{equation}{\value{enumi}}\end{enumerate}}
\definecolor{tomato}{RGB}{180,62,39}
\definecolor{forrest}{RGB}{81,133,49}
\definecolor{lighttomato}{RGB}{253,65,65}
\definecolor{lightforrest}{RGB}{145,237,87}
\definecolor{mygreen}{RGB}{40,104,69}
\definecolor{mygreen2}{RGB}{3,149,39}
\definecolor{darkolivegreen}{RGB}{102,118,75}
\definecolor{cranegreen}{RGB}{102,118,75}
\definecolor{mydarkblue}{RGB}{10,92,153}
\definecolor{myblue}{RGB}{57,222,186}
\definecolor{pinkish}{RGB}{213,83,222}
\definecolor{colD}{RGB}{213,83,222}
\definecolor{defb}{RGB}{213,83,222}
\definecolor{goldenrod}{RGB}{225,115,69}
\definecolor{mauve}{RGB}{224, 176, 255}
\definecolor{fuchsia}{RGB}{255, 0, 255}
\definecolor{lavender}{RGB}{230, 230, 250}
\definecolor{gold}{RGB}{255, 215, 0}
\definecolor{orange}{RGB}{255, 127, 0}
\definecolor{maroon}{RGB}{123, 17, 19}
\definecolor{brightmaroon}{RGB}{195, 33, 72}
\definecolor{richmaroon}{RGB}{176, 48, 96}
\definecolor{green}{RGB}{3,149,39}
\title{Singular spaces with trivial canonical class}
\dedicatory{Dedicated to Professor Shigefumi Mori on the occasion
     of his $60^{\rm th}$ birthday}
\author{Daniel Greb}
\author{Stefan Kebekus}
\author{Thomas Peternell}
\thanks{All three authors were supported in part by the DFG-Forschergruppe 790
  ``Classification of Algebraic Surfaces and Compact Complex Manifolds''. During
  the preparation of parts of this paper, the first named author enjoyed the
  hospitality of the Mathematics Department at Princeton University and the
  Department of Pure Mathematics \& Mathematical Statistics at University of
  Cambridge. He gratefully acknowledges the support of the
  Baden--Württemberg--Stiftung through the ``Eliteprogramm für
  Postdoktorandinnen und Postdoktoranden''.}
\address{Daniel Greb, Mathematisches Institut, Albert-Ludwigs-Universität
  Freiburg, Eckerstraße 1, 79104 Freiburg im Breisgau, Germany}
\email{\href{mailto:daniel.greb@math.uni-freiburg.de}{daniel.greb@math.uni-freiburg.de}}
\urladdr{\href{http://home.mathematik.uni-freiburg.de/dgreb}{http://home.mathematik.uni-freiburg.de/dgreb}}
\address{Stefan Kebekus, Mathematisches Institut, Albert-Ludwigs-Universität
  Freiburg, Eckerstraße 1, 79104 Freiburg im Breisgau, Germany}
\email{\href{mailto:stefan.kebekus@math.uni-freiburg.de}{stefan.kebekus@math.uni-freiburg.de}}
\urladdr{\href{http://home.mathematik.uni-freiburg.de/kebekus}{http://home.mathematik.uni-freiburg.de/kebekus}}
\address{Thomas Peternell, Institut für Mathematik, Universität Bayreuth,
  95440~Bayreuth, Germany}
\email{\href{mailto:thomas.peternell@uni-bayreuth.de}{thomas.peternell@uni-bayreuth.de}}
\urladdr{\href{http://www.staff.uni-bayreuth.de/~btm109/peternell/pet.html}{http://www.staff.uni-bayreuth.de/$\sim$btm109/peternell/pet.html}}
\date{\today}
\keywords{Beauville-Bogomolov Decomposition Theorem, Varieties of Kodaira Dimension Zero, Calabi-Yau, holomorphic-symplectic Varieties, Minimal Model Theory}
\subjclass[2010]{14J32, 14E30, 32J27}
\begin{document}

\begin{abstract}
  The classical Beauville-Bogomolov Decomposition Theorem asserts that any
  compact Kähler manifold with numerically trivial canonical bundle admits an
  étale cover that decomposes into a product of a torus, and irreducible,
  simply-connected Calabi-Yau-- and holomorphic-symplectic manifolds. The
  decomposition of the simply-connected part corresponds to a decomposition of
  the tangent bundle into a direct sum whose summands are integrable and stable
  with respect to any polarisation.

  Building on recent extension theorems for differential forms on singular
  spaces, we prove an analogous decomposition theorem for the tangent sheaf of
  projective varieties with canonical singularities and numerically trivial
  canonical class.

  In view of recent progress in minimal model theory, this result can be seen as
  a first step towards a structure theory of manifolds with Kodaira dimension
  zero. Based on our main result, we argue that the natural building blocks for
  any structure theory are two classes of canonical varieties, which generalise
  the notions of irreducible Calabi-Yau-- and irreducible holomorphic-symplectic
  manifolds, respectively.
\end{abstract}

\maketitle

\vspace{0.3cm}
\tableofcontents
\newpage
\section{Introduction}

\subsection{Introduction and main result}
\label{ssec:IMR}

The minimal model program aims to reduce the birational study of projective
manifolds with Kodaira dimension zero to the study of associated \emph{minimal
  models}, that is, normal varieties $X$ with terminal singularities whose
canonical divisor is numerically trivial. The ideal case, where the minimal
variety $X$ is smooth, is described in the fundamental Decomposition Theorem of
Beauville and Bogomolov.

\begin{thm}[\protect{Beauville-Bogomolov Decomposition, \cite{Bea83} and references there}]\label{bb}%
  Let $X$ be a compact Kähler manifold whose canonical divisor is numerically
  trivial. Then there exists a finite étale cover, $X' \to X$ such that $X'$
  decomposes as a product
  \begin{equation}\label{eq:BBdc}
    X' = T \times \prod_{\nu} X_{\nu}    
  \end{equation}
  where $T$ is a compact complex torus, and where the $X_{\nu} $ are irreducible
  and simply-connected Calabi-Yau-- or holomorphic-symplectic manifolds.
\end{thm}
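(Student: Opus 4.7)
The plan is to combine Yau's solution of the Calabi conjecture with Bochner-type vanishing theorems and the de~Rham--Berger holonomy classification for Ricci-flat Kähler manifolds. The proof is standard and will proceed in three steps: produce a Ricci-flat metric, split off the torus factor by analysing parallel $1$-forms on the universal cover, and split the remaining simply-connected piece by holonomy.

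First, since the canonical class is numerically trivial we have $c_1(X)_{\bR}=0$, so Yau's theorem provides a Ricci-flat Kähler metric $g$ in every Kähler class on $X$. The Bochner formula then forces every holomorphic $1$-form to be parallel with respect to $g$. Applied to the universal cover $\widetilde{X}$, which inherits a complete Ricci-flat Kähler metric, the Cheeger--Gromoll splitting theorem yields an isometric and holomorphic decomposition $\widetilde{X}\cong \bC^{q}\times\widetilde{Y}$, with $\widetilde{Y}$ compact, simply-connected, Ricci-flat Kähler, and carrying no non-zero holomorphic $1$-forms.

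Second, I would descend this product structure to a finite étale cover of $X$. The deck group $\pi_{1}(X)$ acts on $\bC^{q}\times\widetilde{Y}$ preserving both factors, and a Bieberbach-type argument, exploiting the compactness of $X$ and of $\widetilde{Y}$, produces a finite-index subgroup of $\pi_{1}(X)$ that acts trivially on $\widetilde{Y}$ and by translations on $\bC^{q}$. The quotient by this subgroup yields the desired finite étale cover $X'=T\times Y$, where $T$ is a compact complex torus and $Y$ is compact, simply-connected, Ricci-flat Kähler with numerically trivial canonical class and no holomorphic $1$-forms. To split $Y$ further, I would use holonomy: the restricted Riemannian holonomy of the Ricci-flat Kähler metric on $Y$ lies in $U(\dim_{\bC}Y)$, and by de~Rham's decomposition theorem $Y$ splits as a Riemannian, and in fact holomorphic, product $Y=\prod_{\nu}X_{\nu}$ according to the irreducible summands of the holonomy representation. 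Berger's classification, combined with the Ricci-flat Kähler condition and the vanishing of holomorphic $1$-forms on $Y$, then forces the holonomy of each $X_{\nu}$ to be either $SU(n_{\nu})$, so that $X_{\nu}$ is an irreducible Calabi-Yau manifold, or $Sp(n_{\nu}/2)$, so that $X_{\nu}$ is irreducible holomorphic-symplectic.

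The main obstacle is the descent step in the middle paragraph: one has to keep careful track of how $\pi_{1}(X)$ acts on $\bC^{q}\times\widetilde{Y}$ in order to extract a finite-index subgroup whose quotient is \emph{globally} a product $T\times Y$, rather than merely a twisted fibre bundle. Yau's existence theorem and Berger's holonomy classification enter the argument as black boxes; the remaining ingredients, namely Bochner, Cheeger--Gromoll and de~Rham, are applied to a single Ricci-flat Kähler metric produced in the first step.
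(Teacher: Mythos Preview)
The paper does not give its own proof of this theorem; it is quoted as a known result from \cite{Bea83}, and the authors explicitly remark that ``the only known proof of Theorem~\ref{bb} heavily uses Kähler-Einstein metrics and the solution of the Calabi conjecture.'' Your outline is precisely that standard proof---Yau's Ricci-flat metric, Bochner plus Cheeger--Gromoll to split off the flat factor, a Bieberbach-type argument for the descent, and the de~Rham/Berger holonomy classification for the simply-connected piece---so there is nothing to compare; you have reproduced the approach the paper is citing.
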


\begin{rem}
  The decomposition~\eqref{eq:BBdc} induces a decomposition of the tangent
  bundle $T_{X'}$ into a direct sum whose summands have vanishing Chern class,
  and are integrable in the sense of Frobenius' theorem. Those summands that
  correspond to the $X_{\nu}$ are slope-stable with respect to any ample
  polarisation.
\end{rem}

In view of recent progress in minimal model theory, an analogue of
Theorem~\ref{bb} for minimal models would clearly be a substantial step towards
a complete structure theory for varieties of Kodaira dimension zero. However,
since the only known proof of Theorem~\ref{bb} heavily uses Kähler-Einstein
metrics and the solution of the Calabi conjecture, a full generalisation of
Beauville-Bogomolov Decomposition Theorem~\ref{bb} to the singular setting is
difficult.

\subsubsection*{Main result}

The main result of our paper is the following Decomposition Theorem for the
tangent sheaf of minimal varieties with vanishing first Chern class. Presenting
the tangent sheaf as a direct sum of integrable subsheaves which are stable with
respect to any polarisation, Theorem~\ref{decoII} can be seen as an
infinitesimal analogue of the Beauville-Bogomolov Decomposition~\ref{bb} in the
singular setting.

\begin{thm}[Decomposition of the tangent sheaf]\label{decoII}%
  Let $X$ be a normal projective variety with at worst canonical singularities,
  defined over the complex numbers. Assume that the canonical divisor of $X$ is
  numerically trivial: $K_X \equiv 0$. Then there exists an Abelian variety $A$
  as well as a projective variety $\wtilde X$ with at worst canonical
  singularities, a finite cover $f: A \times \wtilde X \to X$, étale in
  codimension one, and a decomposition
  $$
  \sT_{\wtilde X} \cong \bigoplus \sE_i
  $$
  such that the following holds.
  \begin{enumerate-p}
  \item\label{il:A} The $\sE_i$ are integrable saturated subsheaves of
    $\sT_{\wtilde X}$, with trivial determinants.
  \end{enumerate-p}
  Further, if $g: \what X \to \wtilde X$ is any finite cover, étale in
  codimension one, then the following properties hold in addition.
  \begin{enumerate-p}
    \setcounter{enumi}{\value{equation}}
  \item\label{il:B} The sheaves $(g^* \sE_i)^{**}$ are slope-stable with respect
    to any ample polarisation on $\what X$.
  \item\label{il:C} The irregularity of $\what X$ is zero, $h^1\bigl( \what X,\,
    \sO_{\what X} \bigr) = 0$.
  \end{enumerate-p}
\end{thm}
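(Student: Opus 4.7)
The plan is to first split off an abelian factor via an Albanese-type argument, and then decompose the residual piece using stability properties of the reflexive tangent sheaf. I would begin by analysing the (possibly rational) Albanese map $\alpha\colon X \ratmap \Alb(X)$ of the canonical variety $X$. Applying the extension results for reflexive differentials on klt spaces alluded to in the abstract, together with an appropriate singular analogue of Kawamata's splitting theorem for varieties of Kodaira dimension zero, one should obtain a finite cover $f\colon A \times \wtilde X \to X$, étale in codimension one, in which $A$ is an abelian variety and $\wtilde X$ is a projective variety with at worst canonical singularities, $K_{\wtilde X} \equiv 0$, and vanishing irregularity. Property~(C) for further quasi-étale covers $\what X \to \wtilde X$ should then follow by combining $K_{\what X} \equiv 0$, the extension of reflexive one-forms from the smooth locus, and the rigidity/universality of the Albanese construction in the singular setting: any nonzero reflexive one-form on $\what X$ would produce a nontrivial map to an abelian variety that would have to factor through an abelian factor of $\wtilde X$, of which there is none.

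Next, to decompose $\sT_{\wtilde X}$: since $K_{\wtilde X} \equiv 0$, the reflexive tangent sheaf has trivial determinant, so I would invoke a polystability theorem for reflexive sheaves with vanishing first Chern class on canonical varieties. Such a result — a singular analogue of the Bando--Siu circle of ideas — is presumably one of the paper's key technical ingredients, established via semistable reduction on a resolution and a careful analysis of how slope and stability descend under birational contractions with trivial discrepancy. Applying it yields the desired decomposition $\sT_{\wtilde X} = \bigoplus \sE_i$ into stable saturated summands with trivial determinants, giving the algebraic half of~(A). For the integrability claim, the Lie bracket induces a map $\sE_i \otimes \sE_j \to \sT_{\wtilde X}/\sE_j$ for $i \ne j$; the summands $\sE_i$ and $\sE_j$ have slope zero and are stable, so by standard Hom-vanishing between semistable sheaves of the same slope this map must vanish, forcing each $\sE_i$ to be closed under the bracket.

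The hardest point will be~(B): showing that each reflexive pullback $(g^* \sE_i)^{**}$ remains slope-stable for \emph{every} further cover $g\colon \what X \to \wtilde X$ étale in codimension one, with respect to \emph{every} ample polarisation. Stability is delicate under pullback along quasi-étale morphisms — a destabilising subsheaf on $\what X$ need not descend — and the decomposition could in principle strictly refine after further covers. The strategy I would pursue is to arrange $\wtilde X$ to be, in a suitable sense, \emph{maximally quasi-étale}: a cover for which the algebraic fundamental group of the regular locus stabilises under all further quasi-étale maps. On such a cover, any destabilising subsheaf of $(g^* \sE_i)^{**}$ would descend to a destabilising subsheaf of $\sE_i$ itself, contradicting stability on $\wtilde X$. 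Establishing the existence of this maximal cover for varieties with canonical singularities and $K \equiv 0$ — likely via finiteness results for quasi-étale covers coming from the extension theorems for differential forms — and ensuring that stability is independent of the polarisation chosen on $\what X$ will be the technical core of this last step.
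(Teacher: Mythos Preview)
Your overall architecture---split off an abelian factor via Kawamata's Albanese argument, then decompose the tangent sheaf of the residual piece---matches the paper's. The way you propose to execute the decomposition, however, is quite different from what the paper actually does, and a couple of your sub-arguments have genuine gaps.

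\medskip

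\textbf{The decomposition mechanism.} You invoke a singular Bando--Siu-type polystability theorem as a black box. The paper uses no such transcendental input. Its engine is instead the \emph{non-degeneracy of the wedge product} on reflexive forms (Proposition~\ref{prop:forms-1}), proved via Hodge theory on a resolution combined with the Extension Theorem~\ref{thm:ext}. This yields an explicit construction of a complement: given a saturated subsheaf $\sE \subset \sT_X$ with trivial determinant, one pairs the determinant form of $\sE$ with a complementary reflexive form and writes down a splitting by hand (Corollary~\ref{cor:split}). The destabilising subsheaf is shown to have torsion determinant by combining Miyaoka's generic semipositivity, the Campana--Peternell pseudoeffectivity criterion, and a numerical-triviality argument on a $\bQ$-factorialisation; one then passes to the index-one cover. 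This is the paper's main technical contribution, and your proposal bypasses it entirely.

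\medskip

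\textbf{Integrability.} Your Lie-bracket argument is not correct as stated. The obstruction to integrability of $\sE_i$ is the $\sO_X$-linear map $\bigwedge^2 \sE_i \to \sT_{\wtilde X}/\sE_i \cong \bigoplus_{j\ne i}\sE_j$, not a map $\sE_i\otimes\sE_j \to \sT_{\wtilde X}/\sE_j$. Both source and target have slope zero and are semistable, so Hom-vanishing does \emph{not} force the map to be zero. The paper instead appeals to Demailly's Frobenius-integrability theorem for holomorphic $p$-forms, applied on a resolution (Theorem~\ref{thm:integrable}).

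\medskip

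\textbf{Strong stability~(B).} Your ``maximally quasi-étale cover'' strategy is not what the paper does (and establishing the existence of such a cover in this generality is itself nontrivial). The paper's argument is purely by maximality of the number of summands: choose a cover on which the decomposition has the maximal number of factors; if some $\sE_i$ fails to be strongly stable, a further cover carries a destabilising subsheaf of slope zero, to which the splitting machinery of Theorem~\ref{deco} applies again, producing a strictly finer decomposition and contradicting maximality. This avoids fundamental-group considerations altogether.
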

The decomposition found in Theorem~\ref{decoII} satisfies an additional
uniqueness property. For a precise statement, see Remark~\vref{rem:uniqueness}.
Taking $g$ to be the identity, we see that the irregularity of $\wtilde X$ is
zero, and that the summands $\sE_i$ are stable with respect to any polarisation.

\subsubsection*{Other results}

In the course of the proof, we show the following two additional results,
pertaining to stability of the tangent bundle, and to wedge products of
differentials forms that are defined on the smooth locus of a minimal model. We
feel that these results might be of independent interest.

\begin{prop}[Stability of $\sT_X$ does not depend on polarisation, Proposition~\ref{prop:indepSS}]
  Let $X$ be a normal projective variety having at worst canonical
  singularities. Assume that $K_X$ is numerically equivalent to zero.  If the
  tangent sheaf $\sT_X$ is slope-stable with respect to one polarisation, then
  it is also stable with respect to any other polarisation. \qed
\end{prop}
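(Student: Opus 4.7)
My approach is to combine Miyaoka-type generic semipositivity with a first-order analysis of the intersection form on the ample cone, closed off by the Hodge Index Theorem.

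First, since $K_X \equiv 0$ is in particular nef, the generic semipositivity theorem of Miyaoka --- in the form available for varieties with canonical singularities via the extension results for reflexive differentials that are central to this paper --- applies to $\Omega^{[1]}_X$. This says that for every ample class $H$ on $X$ and every proper saturated subsheaf $\sF \subset \sT_X$ one has
$$
c_1(\sF) \cdot H^{n-1} \leq 0, \qquad n := \dim X.
$$
Consequently $\sT_X$ is $\mu$-semistable with respect to \emph{every} polarisation, and the only thing that can possibly fail when one changes the polarisation is strict stability.

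Assume now, for contradiction, that $\sT_X$ is $H_0$-stable but fails to be $H_1$-stable for some other polarisation $H_1$. By the previous paragraph, $\sT_X$ is $H_1$-semistable, so there must exist a proper saturated subsheaf $\sF \subset \sT_X$ with $c_1(\sF) \cdot H_1^{n-1} = 0$. Consider the polynomial function
$$
f(H) := c_1(\sF) \cdot H^{n-1}
$$
on $N^1(X)_{\bR}$. By generic semipositivity $f \leq 0$ on the open ample cone, and $f(H_1) = 0$, so $H_1$ is an interior local maximum of $f$. Differentiating at $H_1$ in an arbitrary direction $H' \in N^1(X)_{\bR}$ yields $(n-1)\,c_1(\sF) \cdot H_1^{n-2} \cdot H' = 0$ for every $H'$, so that $c_1(\sF) \cdot H_1^{n-2} \equiv 0$ as a $1$-cycle class on $X$.

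In particular, $c_1(\sF)^2 \cdot H_1^{n-2} = 0$, and we already have $c_1(\sF) \cdot H_1^{n-1} = 0$. The Hodge Index Theorem --- applied on $X$ directly, or on a resolution of singularities followed by a push-pull argument --- now forces $c_1(\sF) \equiv 0$ numerically. But then $c_1(\sF) \cdot H_0^{n-1} = 0$ as well, contradicting the assumed $H_0$-stability of $\sT_X$, which required this intersection number to be strictly negative.

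The only nontrivial input is the generic semipositivity of $\Omega^{[1]}_X$ in the canonical singular setting; the rest of the argument is an elementary first-order analysis combined with the Hodge Index Theorem. I expect the writing-up difficulty to lie almost entirely in citing and correctly applying this singular version of Miyaoka's theorem.
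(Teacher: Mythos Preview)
Your argument is clean and essentially complete \emph{when $X$ is $\bQ$-factorial}: Miyaoka gives $c_1(\sF)\cdot H^{n-1}\le 0$ for all ample $H$; the interior-maximum at $H_1$ forces the gradient to vanish, so $c_1(\sF)\cdot H_1^{n-2}\cdot H'=0$ for every $H'\in N^1(X)_\bR$; plugging in $H'=c_1(\sF)$ yields $c_1(\sF)^2\cdot H_1^{n-2}=0$; and then Kleiman's criterion (exactly the statement the paper quotes in Lemma~4.1) gives $c_1(\sF)\equiv 0$. This is a genuinely different and more elementary route than the paper's, which instead proves that $(\det\sF)^*$ is \emph{pseudoeffective} via the Campana--Peternell theorem (Proposition~5.3) and then appeals to the tailored numerical-triviality criterion of Proposition~4.2.

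The gap is that the proposition does \emph{not} assume $X$ is $\bQ$-factorial, and your key move---setting $H'=c_1(\sF)$---requires $c_1(\sF)$ to lie in $N^1(X)_\bR$, i.e.\ to be $\bQ$-Cartier. For canonical singularities this can fail. Your proposed fix, ``on a resolution followed by a push-pull argument'', does not work: on a resolution $\pi\colon\wtilde X\to X$ the class $\pi^*H_1$ is only big and nef, hence lies on the \emph{boundary} of the nef cone of $\wtilde X$, so the interior-maximum argument no longer applies; and the Hodge-index implication ``$D\cdot H^{n-1}=D^2\cdot H^{n-2}=0\Rightarrow D\equiv 0$'' is simply false for big and nef $H$ (any $\pi$-exceptional divisor is a counterexample). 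The same obstruction prevents you from running the calculus argument directly on a $\bQ$-factorialisation $\phi\colon X'\to X$: there $c_1(\sF')$ is $\bQ$-Cartier, but $\phi^*H_1$ is again only big and nef, so you lose the interior zero. This is precisely why the paper goes to a $\bQ$-factorialisation \emph{and} replaces your ``$\le 0$ against all ample $H^{n-1}$'' by the stronger input that $-c_1(\sF')$ is pseudoeffective; Proposition~4.2 is then exactly the device that handles a pseudoeffective class with zero intersection against the pull-back of an ample tuple. In short: your approach is correct and pleasant in the $\bQ$-factorial case, but the passage to the general case is where the real content lies, and it is not the routine step you suggest.
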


\begin{prop}[Non-degeneracy of the wedge product, Proposition~\ref{prop:forms-1}]
  Let $X$ be a normal $n$-dimensional projective variety $X$ having at worst
  canonical singularities. Denote the smooth locus of $X$ by $X_{\reg}$. Suppose
  that the canonical divisor is trivial. If $0 \leq p \leq n$ is any number,
  then the natural pairing given by the wedge product on $X_{\reg}$,
  $$
  \bigwedge : H^0\bigl( X_{\reg},\, \Omega^p_{X_{\reg}} \bigr) \times H^0\bigl(
  X_{\reg},\, \Omega^{n-p}_{X_{\reg}} \bigr) \longrightarrow H^0\bigl(
  X_{\reg},\, \omega_{X_{\reg}} \bigr) \cong H^0\bigl( X,\, \omega_X \bigr)
  \cong \bC,
  $$
  is non-degenerate. \qed
\end{prop}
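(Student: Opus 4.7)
The plan is to reduce the non-degeneracy of the wedge pairing to a pointwise statement on holonomy invariants, using the extension theorems for reflexive differentials together with the theory of singular Ricci-flat Kähler-Einstein metrics.

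I begin with the standard reflexive setup. Since $X$ is normal, $H^0(X_{\reg}, \Omega^p_{X_{\reg}}) = H^0(X, \Omega^{[p]}_X)$ where $\Omega^{[p]}_X := (\Omega^p_X)^{**}$, and the trivialization $\sigma$ of $\omega_X$ induces a canonical isomorphism of reflexive sheaves $\Omega^{[n-p]}_X \cong (\Omega^{[p]}_X)^\vee$. The Extension Theorem of Greb--Kebekus--Kov\'acs--Peternell, used throughout this paper, identifies these spaces with spaces of holomorphic forms on any strong log resolution $\pi \colon \wtilde X \to X$, so that the pairing becomes a bilinear form on the finite-dimensional vector space $H^0(\wtilde X, \Omega^p_{\wtilde X})$ with values in $\bC$.

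Next, I endow $X_{\reg}$ with the singular Ricci-flat Kähler-Einstein metric $g_{\mathrm{KE}}$ provided by Eyssidieux--Guedj--Zeriahi, which exists because $X$ has canonical singularities and $K_X \equiv 0$. A Bochner-type vanishing argument shows that every holomorphic reflexive form on $X$ is parallel with respect to the Chern connection of $g_{\mathrm{KE}}$; in particular, any non-zero such form is nowhere vanishing on $X_{\reg}$, and $\sigma$ itself is parallel. Since wedge products of parallel forms are again parallel, non-degeneracy of the pairing reduces to its non-degeneracy on the subspace of holonomy-invariant tensors in $\bigwedge^\bullet T^*_{X, x_0}$ at a fixed smooth point $x_0 \in X_{\reg}$.

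Finally, Berger's classification combined with the de Rham decomposition implies that the holonomy of $g_{\mathrm{KE}}$ is a product of factors of type $\mathrm{U}(1)^k$, $\mathrm{SU}(m)$, and $\mathrm{Sp}(\ell)$. In each case a direct check confirms that the induced wedge pairing on invariants is non-degenerate: abelian factors give the classical wedge duality on $\bigwedge^\bullet V$; $\mathrm{SU}(m)$-factors produce only constants and the factorwise volume form as invariants, which pair to the top form; and symplectic factors give invariants generated by powers of an $\mathrm{Sp}(\ell)$-invariant symplectic form $\omega_s$, with $\omega_s^k \wedge \omega_s^{\ell-k} \ne 0$ for $0 \le k \le \ell$. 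The main obstacle is the analytical input: establishing parallelism of reflexive holomorphic forms under the Chern connection of the singular Kähler-Einstein metric; once this is in place, the pointwise reduction by holonomy makes non-degeneracy a direct consequence of Berger's classification.
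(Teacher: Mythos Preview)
Your approach is genuinely different from the paper's, and considerably heavier. The paper's proof is purely Hodge-theoretic and works entirely on a smooth resolution $\pi\colon \wtilde X \to X$: after pulling back a trivialising section $\sigma$ of $\omega_X$ to $\tau \in H^0(\wtilde X, \omega_{\wtilde X})$, one shows that wedging with $\tau$ induces isomorphisms $\phi_q\colon H^{0,q}(\wtilde X) \to H^{n,q}(\wtilde X)$ for all $q$. This reduces to comparing $H^q(\wtilde X, \sO_{\wtilde X}) \cong H^q(X, \sO_X)$ (rational singularities) with $H^q(\wtilde X, \omega_{\wtilde X}) \cong H^q(X, \omega_X)$ (Grauert--Riemenschneider), and on $X$ the corresponding map is multiplication by the unit $\sigma$. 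Hodge conjugation and Serre duality on the smooth manifold $\wtilde X$ then give a perfect pairing $(\alpha, \beta) \mapsto \int_{\wtilde X} \alpha \wedge \beta \wedge \overline\tau$, from which non-degeneracy of the holomorphic wedge product follows at once. No K\"ahler--Einstein metrics, no Bochner argument, no holonomy.

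Your route invokes singular Ricci-flat metrics, a Bochner-type parallelism statement, and Berger's classification. The step you yourself flag as the ``main obstacle''---parallelism of reflexive holomorphic forms for the singular K\"ahler--Einstein metric on $X_{\reg}$---is a substantial analytic result in its own right (the metric is typically incomplete, and the integration by parts near $X_{\sing}$ is delicate); it is harder than the proposition you are proving. More seriously, the holonomy decomposition for singular Ricci-flat spaces in the subsequent literature was developed \emph{building on} the results of the present paper, of which this proposition is an ingredient via Corollary~\ref{cor:split} and Theorem~\ref{deco}; so invoking that machinery here carries a real risk of circularity. Granting the analytic input, the pointwise holonomy argument is correct; but as written you have replaced an elementary Hodge-theoretic computation by an outline whose central step is both deeper and logically downstream of the result.
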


\subsubsection*{Singular analogues of Calabi-Yau and irreducible symplectic manifolds}

Based on the Decomposition Theorem~\ref{decoII}, we will argue in
Section~\ref{subsect:fromtangentbundletoX} that the natural building blocks for
any structure theory of projective manifolds with Kodaira dimension zero are
canonical varieties with \emph{strongly stable} tangent sheaf. Strong stability,
introduced in Definition~\vref{def:strongStab}, is a formalisation of condition
(\ref{decoII}.\ref{il:B}) that appears in the Decomposition
Theorem~\ref{decoII}.

In dimension no more than five, we show that canonical varieties with strongly
stable tangent sheaf fall into two classes, which naturally generalise the
notions of irreducible Calabi-Yau-- and irreducible holomorphic-symplectic
manifolds, respectively. There is ample evidence to suggest that this dichotomy
should hold in arbitrary dimension.

\subsubsection*{Outline of the paper}

The proof of Theorem~\ref{decoII} relies on recent extension results for
differential forms on singular spaces, which we recall in
Section~\ref{sec:reflDiff} below.  There are three additional preparatory
sections, Sections \ref{sec:known}--\ref{sect:stability}, where we recall
structure results for varieties with trivial canonical bundle, and discuss
stability properties of the tangent sheaf on varieties with numerically trivial
canonical divisor. Some of the material in these sections is new.

Using the extension result together with Hodge-theoretic arguments, we will show
in Section~\ref{sec:kawamata} that the wedge-product induces perfect pairings of
reflexive differential forms. This will later on be used to split the inclusion
of certain subsheaves of the tangent sheaf. The results obtained there
generalise ideas of Bogomolov \cite{Bogomolov74}, but are new in the singular
setting, to the best of our knowledge. With these preparations in place,
Theorem~\ref{decoII} is then shown in Section~\ref{sect:deco}.

Based on our main results, the concluding
Section~\ref{subsect:fromtangentbundletoX} discusses possible approaches towards
a more complete structure theory of singular varieties with trivial canonical
bundle, and proves first results in this direction.

\subsubsection*{Global Convention}

Throughout the paper we work over the complex number field. In the discussion of
sheaves, the word ``stable'' will always mean ``slope-stable with respect to a
given polarisation''. Ditto for semistability.

\subsection{Acknowledgements}

The authors would like to thank Tommaso de Fernex, Alex Küronya and Keiji Oguiso
for a number of discussions, and for answering our questions. The authors thank
the referee for reading the manuscript with great care. The first named author
wants to thank János Kollár and Burt Totaro for their invitations and for useful
comments concerning the topics discussed in this paper.

\section{Reflexive differentials on normal spaces}
\label{sec:reflDiff}

\subsection{Differentials, reflexive tensor operations}

Given a normal variety $X$, we denote the sheaf of Kähler differentials by
$\Omega^1_X$. The tangent sheaf will be denoted by $\sT_X =
(\Omega^1_X)^*$. Reflexive differentials, as defined below, will play an
important role in the discussion.

\begin{defn}[\protect{Reflexive differential forms, cf.~\cite[Sect.~2.E]{GKKP10}}]\label{def:reflDiff}
  Let $X$ be a normal variety, let $X_{\reg}$ be the smooth locus of $X$ and
  $\imath: X_{\reg} \hookrightarrow X$ its open embedding into $X$. If $0 \leq p
  \leq \dim X$ is any number, we denote the reflexive hull of the
  $p^{\textrm{th}}$ exterior power of $\Omega^1_X$ by
  $$
  \Omega^{[p]}_X := \left( \wedge^p \Omega^1_X \right)^{**} = \imath_* \Omega^p_{X_{\reg}}.
  $$
  We refer to sections in $\Omega^{[p]}_X$ as \emph{reflexive $p$-forms on $X$}.
\end{defn}

\begin{rem}[Reflexive differentials and dualising sheaf]
  In the setting of Definition~\ref{def:reflDiff}, recall that the
  Grothendieck dualising sheaf $\omega_X$ is always reflexive. We obtain that
  $$
  \Omega^{[\dim X]}_X = \omega_X = \sO_X\bigl(K_X\bigr).
  $$
\end{rem}

\begin{notation}[Reflexive tensor operations]\label{not:relfxive}
  Let $X$ be a normal variety and $\sA$ a coherent sheaf of $\O_X$-modules, of
  rank $r$. For $n\in \bN$, set $\sA^{[n]} := (\sA^{\otimes
    n})^{**}$. Further, set $\det \sA = \bigl( \wedge^r \sA \bigr)^{**}$. If
  $\pi: X' \to X$ is a morphism of normal varieties, set $\pi^{[*]}(\sA) :=
  \bigl( \pi^*\sA \bigr)^{**}$.
\end{notation}

\subsection{Extension results for differential forms on singular spaces}
\label{ssec:extension}

One of the main ingredients for the proof of the Decomposition
Theorem~\ref{decoII} is the following extension result for differential forms on
singular spaces, recently shown in joint work of the authors and Sándor
Kovács. In its simplest form, the extension theorem asserts the following.

\begin{thm}[\protect{Extension Theorem, \cite[Thm.~1.5]{GKKP10}}]\label{thm:ext}
  Let $X$ be a quasi-projective complex algebraic variety and $D$ an effective
  $\mathbb Q$-divisor on $X$ such that the pair $(X,D)$ is Kawamata log
  terminal (klt). If $\pi: \wtilde X \to X$ is any resolution of singularities
  and $0 \leq p \leq \dim X$ any number, then the push-forward sheaf $\pi_*
  \Omega^p_{\wtilde X}$ is reflexive. \qed
\end{thm}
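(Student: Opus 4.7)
The statement asserts reflexivity of $\pi_*\Omega^p_{\wtilde X}$ on a klt pair. Since $\pi$ restricts to an isomorphism on a big open subset of $X_{\reg}\setminus\supp D$, one always has a canonical inclusion $\pi_*\Omega^p_{\wtilde X}\hookrightarrow \Omega^{[p]}_X$, so the content of the theorem is the reverse inclusion: every reflexive $p$-form on $X$ must pull back to an \emph{honestly} holomorphic form on the resolution, with no poles anywhere along the exceptional locus. The plan is to split this into a log-extension step and a step that kills the log poles.

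First I would reduce to a local study around an arbitrary (possibly singular) point $x\in X$, and choose a strong log resolution $\pi:\wtilde X\to X$ of $(X,D)$ such that the reduced divisor $E:=\Exc(\pi)\cup \pi^{-1}_*\supp D$ has simple normal crossings. The klt hypothesis translates, via the adjunction formula $K_{\wtilde X}+\wtilde D = \pi^*(K_X+D)+\sum a_i E_i$, into the discrepancy bound $a_i>-1$ for every $\pi$-exceptional component and every component lying over $\supp D$. Given $\sigma\in\Omega^{[p]}_X$ near $x$, its pullback $\pi^*\sigma$ is a rational $p$-form on $\wtilde X$ that is regular on $\wtilde X\setminus E$; what must be shown is that it extends across $E$ with no poles at all.

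The second step is to extend $\pi^*\sigma$ as a form with at worst logarithmic poles along $E$, i.e.\ to a section of $\Omega^p_{\wtilde X}(\log E)$. This is the ``easy half'' of the extension, and I would argue it by combining Deligne's theory of the logarithmic deRham complex on the snc pair $(\wtilde X,E)$ with the observation that the discrepancy inequality $a_i>-1$ forces $\pi^*\sigma$ to have at most moderate growth along each $E_i$; any rational form with moderate growth along an snc divisor lies in $\Omega^p_{\wtilde X}(\log E)$ by the standard growth criterion for log poles.

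The heart of the proof, and where I expect the main obstacle to lie, is the third step: showing that the residues of $\pi^*\sigma$ along each component $E_i$ actually vanish, so that $\pi^*\sigma\in \Omega^p_{\wtilde X}$. Here the strict klt inequality $a_i>-1$ must be used in a nontrivial way. My strategy would be a cyclic-cover construction: pass to a suitable index-one cover ramified along a multiple of $E$, thereby reducing the statement about log differentials on $(\wtilde X, E)$ to a statement about ordinary differentials on a klt space of smaller complexity; iterating, one arrives at a situation where the residues are controlled by a Hodge-theoretic object. The required vanishing should ultimately reduce to a Steenbrink- or Kollár-type vanishing, for instance of $R^q\pi_*\bigl(\Omega^p_{\wtilde X}(\log E)(-E)\bigr)$, which, combined with the short exact residue sequences of the snc pair $(\wtilde X, E)$ and a weight argument on the mixed Hodge structure of $\wtilde X\setminus E$, forces every residue of $\pi^*\sigma$ to be zero. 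The combinatorial bookkeeping required to feed the discrepancy data into the residue vanishing, and the need to work with pairs $(X,D)$ rather than just with $X$, is what makes this step genuinely delicate.
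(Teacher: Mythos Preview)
The paper does not prove this theorem. Theorem~\ref{thm:ext} is stated with a terminal \qed\ and cited verbatim from \cite[Thm.~1.5]{GKKP10}; the present paper uses it as a black box and offers no argument of its own. So there is nothing in this paper to compare your proposal against.

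That said, your outline is in the spirit of the proof in \cite{GKKP10}, which does proceed via a log resolution, reduction to the logarithmic complex, residue sequences, cyclic covers, and Steenbrink-type vanishing. But you should be aware that the actual argument occupies the bulk of an eighty-page paper, and two of your steps are considerably harder than you suggest. In Step~2, the inference ``discrepancies $>-1$ imply moderate growth, hence log poles'' is not a valid shortcut: the discrepancy condition is a statement about a single top-degree form, namely $\pi^*(K_X+D)$, and does not by itself bound the pole order of an arbitrary pulled-back reflexive $p$-form along $E$. Extending $\pi^*\sigma$ to a section of $\Omega^p_{\wtilde X}(\log E)$ is already a substantial theorem (it is the log-canonical extension result, proved separately in \cite{GKKP10} and in earlier work for surfaces and isolated singularities), and it requires a genuine induction with cutting-down and residue arguments rather than a growth estimate. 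In Step~3, the residue vanishing is indeed the crux, but the reduction you describe is schematic: the cyclic-cover step does not simply lower ``complexity'', and the vanishing one needs is not a single Kollár-type statement but a carefully organised package of local vanishing theorems for the sheaves $R^q\pi_*\Omega^p_{\wtilde X}(\log E)$ coupled with a duality argument. Your sketch identifies the right cast of characters, but it is an outline of a research programme rather than a proof; for the purposes of this paper the correct move is to cite \cite{GKKP10} and move on, which is exactly what the authors do.
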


Using Definition~\ref{def:reflDiff}, the Extension Theorem~\ref{thm:ext} can
be reformulated, saying that $ \pi_* \Omega^p_{\tilde X} = \Omega^{[p]}_X$
for all $p \leq \dim X$.  Equivalently, if $E \subset \wtilde X$ denotes the
$\pi$-exceptional set, then the Extension Theorem asserts that for any open
set $U \subset X$, any $p$-form on $\pi^{-1}(U) \setminus E$ extends across
$E$, to give a $p$-form on $\pi^{-1}(U)$. In other words, it asserts that the
natural restriction map
$$
\Omega^p_{\wtilde X} \bigl( \pi^{-1}(U) \bigr) \to \Omega^p_{\wtilde X} \bigl(
\pi^{-1}(U) \setminus E \bigr)
$$
is surjective. We refer to the original papers \cite[Sect.~1]{GKKP10} and
\cite{GKK08} or to the survey \cite{Keb11} for an in-depth discussion.

\section{Irregularity and Albanese map of canonical varieties}
\label{sec:known}

The Albanese map is one important tool in the study of varieties with trivial
canonical divisor. The following invariant is relevant in its investigation.

\begin{defn}[Augmented irregularity]
  Let $X$ be a normal projective variety. We denote the irregularity of $X$ by
  $q(X) := h^1\bigl( X,\, \sO_X \bigr)$ and define the \emph{augmented
    irregularity} as
  $$
  \wtilde q(X) := \max \bigl\{q(\wtilde X) \mid \wtilde X \to X \ \text{a
    finite cover, étale in codimension one} \bigr\} \in \mathbb N \cup \{ \infty
  \}.
  $$
\end{defn}

\begin{rem}[Irregularity and the Albanese map]
  If $X$ is a projective variety with canonical singularities, recall from
  \cite[Sect.~8]{Kawamata85} that the Albanese map $\alpha_X : X \to \Alb(X)$ is
  well defined, and that $\dim \Alb(X) = q(X)$. Better still, if $\pi : \wtilde
  X \to X$ is any resolution of singularities, then the Albanese map
  $\alpha_{\wtilde X}$ of $\wtilde X$ agrees with $\alpha_X$. In other words,
  there exists a commutative diagram as follows,
  $$
  \xymatrix{%
    \wtilde X \ar[d]_{\pi\text{, desing.}} \ar[rrr]^{\alpha_{\wtilde
        X}}_{\text{Albanese map}} &&& \Alb(\wtilde X) \ar@{=}[d]  \\
    X \ar[rrr]^{\alpha_X}_{\text{Albanese map}} &&& \Alb(X). }
  $$
\end{rem}

The following result of Kawamata describes the Albanese map of varieties whose
canonical divisor is numerically trivial. As we will see in
Corollary~\ref{cor:qgleichnull}, this often reduces the study of varieties with
trivial canonical class to those with $\wtilde q(X) = 0$.

\begin{prop}[\protect{Fibre space structure of the Albanese map, \cite[Prop.~8.3]{Kawamata85}}]\label{Kaw}
  Let $X$ be a normal $n$-dimensional projective variety $X$ with at worst
  canonical singularities. Assume that $K_X$ is numerically trivial.  Then $K_X$
  is torsion, the Albanese map $\alpha: X \to \Alb(X)$ is surjective and has the
  structure of an étale-trivial fiber bundle.

  In other words, there exists a number $m \in \mathbb N^+$ such that
  $\sO_X(m\cdot K_X) \cong \sO_X$. Furthermore, there exists a finite étale
  cover $B \to \Alb(X)$ from an Abelian variety $B$ to $\Alb(X)$ such that the
  fiber product over $\Alb(X)$ decomposes as a product
  $$
  X \times_{\Alb(X)} B \cong F \times B,
  $$
  where $F$ is a normal projective variety. \qed
\end{prop}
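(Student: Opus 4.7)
The plan is to address the three claims in sequence: surjectivity of $\alpha_X$, torsion of $K_X$, and the étale-trivial fibre bundle structure after a finite étale cover $B \to \Alb(X)$. I begin by passing to a resolution $\pi : Y \to X$. Since $X$ has canonical singularities, $K_Y = \pi^* K_X + E$ with $E$ effective and $\pi$-exceptional, so $K_Y$ is pseudoeffective with $\kappa(Y) = 0$, and $\Alb(Y) = \Alb(X)$; hence I work entirely with the smooth fibration $\alpha_Y : Y \to A := \Alb(Y)$.

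For surjectivity, I would apply Ueno's structure theorem for subvarieties of abelian varieties to $Z := \alpha_Y(Y) \subset A$. If $Z \neq A$, Ueno exhibits a fibration $Z \to W$ with $W$ of general type and fibres translates of a fixed abelian subvariety $A' \subset A$. Either $\dim W > 0$, in which case pluricanonical forms on $W$ pull back to sections of $\omega_Y^{\otimes m}$ contradicting $\kappa(Y) = 0$, or $Z$ is a proper translate of an abelian subvariety, contradicting the universal property of the Albanese. Hence $Z = A$.

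The main work is the torsion of $K_X$ together with the equidimensionality of $\alpha_Y$, where the key tool is Viehweg--Kollár positivity of direct images. The general fibre $F$ of $\alpha_Y$ satisfies $\kappa(F) = 0$ by subadditivity over abelian bases, so for $m$ sufficiently divisible the sheaf $(\alpha_Y)_* \omega_{Y/A}^{\otimes m}$ is weakly positive on $A$; simultaneously, $\pi$-exceptionality of $E = K_Y - \pi^* K_X$ combined with $K_X \equiv 0$ forces its determinant to be numerically trivial. A weakly positive sheaf on an abelian variety with numerically trivial determinant becomes trivial after a finite étale cover $B \to A$, which produces a fibrewise nonvanishing section of $\omega_{Y_B/B}^{\otimes m}$ descending to give $mK_X \sim 0$, and simultaneously forces $\alpha_Y$ to be equidimensional with general fibre $F$ satisfying $mK_F \sim 0$.

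With equidimensionality and torsion of $K_F$ in hand, the final step is to rigidify the base-changed fibration $Y \times_A B \to B$ into a trivial product. For this I would invoke isotriviality of families of varieties with trivial canonical class over abelian bases: any moduli map from $B$ into a relevant moduli space must be constant, as its image would otherwise acquire positive Kodaira dimension, contradicting $\kappa(B) = 0$. After absorbing the monodromy of the resulting isotrivial family into a further étale cover of $B$, the fibration trivialises to $F \times B$, and the construction descends to $X$. The main obstacle I anticipate is the torsion step: promoting numerical triviality of the direct image to honest triviality under étale base change requires careful control of the $\pi$-exceptional divisor $E$ and precise use of the geometry of line bundles on abelian varieties, and is where the technical weight of the proof lies.
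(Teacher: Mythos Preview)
The paper does not prove this proposition at all: the statement ends with a \qed{} symbol immediately after its formulation, indicating that it is quoted verbatim from \cite[Prop.~8.3]{Kawamata85} and used as a black box. There is therefore no proof in the paper to compare your proposal against.

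That said, your sketch is a reasonable outline of the circle of ideas behind Kawamata's original argument. The surjectivity step via Ueno's theorem is standard. The torsion and equidimensionality step is indeed where the depth lies, and your invocation of weak positivity of direct images together with the special geometry of abelian varieties is the right mechanism, though your account is quite compressed: the passage from weak positivity plus numerical triviality of the determinant to triviality after étale base change, and then to an actual section trivialising $mK_X$, requires more care than you indicate, and in Kawamata's paper this is the technical heart. Your final step, deducing isotriviality from the absence of nonconstant maps from an abelian variety to a moduli space of positive Kodaira dimension, is morally correct but again elides the construction of the relevant moduli space and the argument that monodromy can be killed by a further finite étale cover. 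None of these are fatal gaps in a proof \emph{sketch}, but you should be aware that each would need substantial expansion to become a proof, and that the paper itself simply defers all of this to the cited reference.
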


\begin{rem}\label{rem:castor}
  If $X$ is a projective variety with canonical singularities and numerically
  trivial canonical class, Proposition~\ref{Kaw} implies that $q(X) = \dim
  \Alb(X) \leq \dim X$. If $\wtilde X \to X$ is any finite cover, étale in
  codimension one, then $\wtilde X$ will likewise have canonical singularities
  \cite[Prop.~5.20]{KM98} and numerically trivial canonical class. In summary,
  we see that $\wtilde q(X) \leq \dim X$. The augmented irregularity of
  canonical varieties with numerically trivial canonical class is therefore
  finite.
\end{rem}

\begin{rem}\label{rem:pollux}
  In the setting of Proposition~\ref{Kaw}, the canonical map
  $$
  F \times B \cong X \times_{\Alb(X)} B \to X
  $$
  is étale. The variety $F \times B$ is thus canonical by
  \cite[Prop.~5.20]{KM98}. Since $B$ is smooth, this automatically implies that
  $F$ is canonical. If the canonical divisor of $X$ is trivial, then $F$ will
  likewise have a trivial canonical divisor.
\end{rem}

A variant of the following corollary has appeared as \cite[Thm.~4.2]{Pe94}.

\begin{cor}[\protect{Structure of varieties with numerically trivial canonical class, cf.~\cite[Cor.~8.4]{Kawamata85}}]\label{cor:qgleichnull}
  Let $X$ be a normal $n$-dimensional projective variety with at worst canonical
  singularities. Assume that $K_X$ is numerically trivial. Then there exist
  projective varieties $A$, $Z$ and a morphism $\nu: A \times Z \to X$ such that
  the following holds.
  \begin{enumerate-p}
  \item The variety $A$ is Abelian.
  \item The variety $Z$ is normal and has at worst canonical singularities.
  \item The canonical class of $Z$ is trivial, $\omega_Z \cong \sO_Z$.
  \item The augmented irregularity of $Z$ is zero, $\wtilde q(Z) = 0$.
  \item The morphism $\nu$ is finite, surjective and étale in codimension one.
  \end{enumerate-p}
\end{cor}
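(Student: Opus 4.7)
The plan is to proceed by induction on $n := \dim X$, using Proposition~\ref{Kaw} to peel off an Abelian factor at each step. The base case $n=0$ is trivial (take $A$ and $Z$ to be a point). For the inductive step, I first reduce to the case $\omega_X \cong \sO_X$: Proposition~\ref{Kaw} ensures that $K_X$ is torsion, so the associated index-one cyclic cover $X_0 \to X$ is étale over the smooth locus of $X$ (hence étale in codimension one), preserves canonical singularities by \cite[Prop.~5.20]{KM98}, and has $\omega_{X_0} \cong \sO_{X_0}$. Any morphism $A \times Z \to X_0$ satisfying the conclusion of the corollary for $X_0$ then yields one for $X$ upon composition with $X_0 \to X$.

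Assume now $\omega_X \cong \sO_X$. If $\wtilde q(X) = 0$, setting $A$ to be a point and $Z := X$ concludes the proof. Otherwise, by definition of the augmented irregularity there exists a finite cover $X' \to X$, étale in codimension one, with $q(X') > 0$; this $X'$ is again normal, canonical, and has $\omega_{X'} \cong \sO_{X'}$. Applying Proposition~\ref{Kaw} to $X'$ one obtains a finite étale cover $B \to \Alb(X')$ from an Abelian variety $B$, together with an isomorphism $X' \times_{\Alb(X')} B \cong F \times B$. By Remark~\ref{rem:pollux}, the variety $F$ is normal, canonical, and satisfies $\omega_F \cong \sO_F$. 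Since $q(X') > 0$, we have $\dim F = n - q(X') < n$, so the induction hypothesis applies to $F$ and yields an Abelian variety $A'$, a normal canonical variety $Z$ with $\omega_Z \cong \sO_Z$ and $\wtilde q(Z) = 0$, and a finite morphism $\mu : A' \times Z \to F$ that is étale in codimension one.

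Setting $A := A' \times B$, the composition
$$
\nu : A \times Z = A' \times Z \times B \xrightarrow{\mu \times \Id_B} F \times B \cong X' \times_{\Alb(X')} B \longrightarrow X' \longrightarrow X
$$
is then finite and surjective, with $A$ Abelian and $Z$ as required. The one point to verify is that $\nu$ is étale in codimension one, which reduces to the facts that étaleness in codimension one is preserved under products, under base change along étale maps (here $B \to \Alb(X')$, so the projection $X' \times_{\Alb(X')} B \to X'$ is étale), and under composition. This bookkeeping, together with the stability of the properties \emph{canonical} and \emph{trivial canonical sheaf} under such covers and products \cite[Prop.~5.20]{KM98}, is the only technical obstacle. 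Termination of the induction is guaranteed by Remark~\ref{rem:castor} and the strict dimension drop at each recursive call.
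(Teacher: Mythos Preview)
Your proof is correct but proceeds differently from the paper. The paper does not induct on dimension; instead it passes in one step to a cover $X^{(2)}$ of the index-one cover on which the augmented irregularity is \emph{realised}, i.e.\ $q(X^{(2)}) = \wtilde q(X^{(2)})$, and then applies Proposition~\ref{Kaw} once to split $X^{(2)}$ as $F \times B$. The vanishing $\wtilde q(F)=0$ is then obtained by a short contradiction argument: a further Abelian factor in a cover of $F$ would violate the maximality of $q(X^{(2)})$. Your inductive scheme instead peels off one Abelian factor at a time, recursing into the fibre $F$ of strictly smaller dimension; termination comes simply from the dimension drop (Remark~\ref{rem:castor} is not really needed for that, only to make sense of the case distinction on $\wtilde q$). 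The paper's route is shorter and avoids tracking compositions of covers through several levels of recursion, while yours is arguably more mechanical and makes the role of Proposition~\ref{Kaw} as the sole input completely transparent. Both arguments use the same ingredients (Proposition~\ref{Kaw}, Remarks~\ref{rem:castor} and~\ref{rem:pollux}, and \cite[Prop.~5.20]{KM98}) and neither requires anything the other does not.
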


\begin{proof}
  We construct a sequence of finite surjective morphisms,
  $$
  \xymatrix{ %
    F \times B \ar[rr]^\gamma_{\text{étale}} && X^{(2)}
    \ar[rrr]^\beta_{\text{étale in codim. one}} &&& X^{(1)}
    \ar[rrr]^\alpha_{\text{index-one cover}} &&& X, %
  }
  $$
  as follows. Recall from Proposition~\ref{Kaw} that the canonical divisor of
  $X$ is torsion, and let $\alpha : X^{(1)} \to X$ be the associated index-one
  cover, cf.~\cite[Sect.~5.2]{KM98} or \cite[Sect.~3.5]{Reid87}. We have seen in
  Remark~\ref{rem:castor} that the variety $X^{(1)}$ has a trivial canonical
  divisor and at worst canonical singularities.  Remark~\ref{rem:castor} also
  shows that $\wtilde q(X^{(1)})$ is finite. This implies that there exists a
  finite morphism $\beta : X^{(2)} \to X^{(1)}$, étale in codimension one, such
  that
  $$
  \wtilde q \left( X^{(1)} \right) = q \left( X^{(2)} \right) = \wtilde q
  \left( X^{(2)} \right).
  $$
  Again, $X^{(2)}$ has trivial canonical divisor and canonical singularities.
  Next, let $\gamma : F \times B \to X^{(2)}$ be the étale morphism obtained by
  applying Proposition~\ref{Kaw} and Remark~\ref{rem:pollux} to the variety
  $X^{(2)}$. The variety $B$ then satisfies the following,
  \begin{equation}\label{eq:sfoejg}
    \dim B = \dim \Alb X^{(2)} = q \left( X^{(2)} \right) = \wtilde q \left(
      X^{(2)} \right).
  \end{equation}
  Remark~\ref{rem:pollux} also asserts that $F$ has a trivial canonical divisor
  and canonical singularities.

  To finish the proof, it suffices to show that $\wtilde q(F) = 0$. If not, we
  could apply Proposition~\ref{Kaw} and Remark~\ref{rem:pollux} to $F$,
  obtaining an étale map $\delta : (F' \times B') \times B \to F \times B$,
  where $B'$ is Abelian, and of positive dimension. The composed morphism
  $\gamma \circ \delta : F' \times (B' \times B) \to X^{(2)}$ is again étale,
  showing that
  $$
  \wtilde q \left( X^{(2)} \right) \geq q\bigl( F' \times (B' \times B) \bigr)
  \geq \dim B + \dim B' > \dim B,
  $$
  thus contradicting Equation~\eqref{eq:sfoejg} above. This shows that $\wtilde
  q(F) = 0$ and finishes the proof of Corollary~\ref{cor:qgleichnull}.
\end{proof}

\section{A criterion for numerical triviality}

The proof of the Decomposition Theorem~\ref{decoII} of rests on an analysis of
the tangent sheaf of Kawamata log terminal spaces and of its destabilising
subsheaves. We will show that the determinant of any destabilising subsheaf is
trivial, at least after passing to a suitable cover. This part of the proof is
based on a criterion for numerical triviality, formulated in
Proposition~\ref{prop:angela}.  The criterion generalises the following result
which goes back to Kleiman.

\begin{lem}[Kleiman's criterion for numerical triviality]\label{lem:kleiman}
  Let $Z$ be an irreducible, normal projective variety of dimension $n \geq 2$
  and $D$ a $\mathbb Q$-Cartier divisor on $Z$. If $D \cdot H_1 \cdots H_{n-1} =
  0$ for all $(n-1)$-tuples of ample divisors $H_1, \ldots, H_{n-1}$ on $Z$,
  then $D$ is numerically trivial.
\end{lem}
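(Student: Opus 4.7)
The plan is to first strengthen the hypothesis via a polarisation argument, and then to apply the Hodge index theorem to conclude numerical triviality.

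The assignment $(H_1, \ldots, H_{n-1}) \mapsto D \cdot H_1 \cdots H_{n-1}$ defines a symmetric multilinear form on $\bigl(N^1(Z)_{\mathbb R}\bigr)^{n-1}$. By hypothesis it vanishes on the non-empty open subset $\mathrm{Amp}(Z)^{n-1}$, and a multilinear form vanishing on a non-empty open subset vanishes identically. Hence $D \cdot D_1 \cdots D_{n-1} = 0$ for \emph{arbitrary} divisor classes $D_1, \ldots, D_{n-1}$ on $Z$. In particular, specialising $D_1 := D$ and $D_2 = \cdots = D_{n-1} := H$ for an ample class $H$ yields $D^2 \cdot H^{n-2} = 0$, while the original hypothesis itself gives $D \cdot H^{n-1} = 0$.

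Next I would invoke the Hodge index theorem in its higher-dimensional form on the normal projective variety $Z$, which asserts
\[
(D \cdot H^{n-1})^2 \;\geq\; (D^2 \cdot H^{n-2}) \cdot H^n
\]
for any ample $H$, with equality precisely when $D \equiv \lambda H$ for some $\lambda \in \mathbb{Q}$. The two vanishings from the first step force equality in this inequality, so $D \equiv \lambda H$. Testing this numerical proportionality against $H^{n-1}$ yields $\lambda H^n = 0$, and since $H^n > 0$ we conclude $\lambda = 0$, whence $D \equiv 0$.

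The main technical step I anticipate is the justification of the Hodge index inequality on the possibly singular variety $Z$. The standard route is to reduce to the classical two-dimensional case by cutting down with $n - 2$ generic members of $|m H|$ for $m$ sufficiently large: iterated application of Bertini's theorem, combined with Seidenberg's theorem guaranteeing that a generic hyperplane section of a normal variety of dimension $\geq 3$ is again normal, produces a normal projective surface $S$ on which both the inequality and its equality characterisation are classical. This surface-reduction step is arguably where the genuine content of the proof lies, although the required machinery is by now standard.
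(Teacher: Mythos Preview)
Your argument and the paper's are essentially the same in structure. The paper writes $D \sim H_{1,1} - H_{1,2}$ as a difference of two ample divisors and expands $D^2 \cdot H_2 \cdots H_{n-1}$ linearly to see it vanishes; this is a concrete instance of your multilinearity extension. The paper then cites Kleiman \cite[Prop.~3 on p.~305]{K66}, which is precisely the statement that $D \cdot H_1 \cdots H_{n-1} = 0$ together with $D^2 \cdot H_2 \cdots H_{n-1} = 0$ forces $D \equiv 0$ --- the same content as the equality case of your higher-dimensional Hodge index inequality. So at the level of what is invoked, the two proofs coincide.

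Where your proposal goes further is in sketching a proof of that black box via surface reduction, and here there is a genuine gap. Cutting down to a general surface $S$ and applying the classical Hodge index theorem yields only $D|_S \equiv \lambda H|_S$ on $S$; you then need to lift this to $D \equiv \lambda H$ on $Z$, i.e.\ you need injectivity of $N^1(Z)_{\bR} \to N^1(S)_{\bR}$. This is not automatic for singular $Z$ (Lefschetz-type theorems require care here), and proving it is exactly the substance of Kleiman's result. So your reduction does not eliminate the black box but relocates it. Citing the result directly, as the paper does, is the honest route; if you want to unpack it, the lifting step is where the work actually lies.
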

\begin{proof}
  Passing to a sufficiently high multiple of $D$, we can assume without loss of
  generality that $D$ is Cartier, and thus linearly equivalent to the difference
  of two ample Cartier divisors, $D \sim H_{1,1} - H_{1,2}$. Let $H_2, \ldots,
  H_{n-1}$ be arbitrary ample divisors. Recall from \cite[Prop.~3 on page
  305]{K66} that to prove numerical triviality of $D$, it suffices to show that
  the following two equalities
  \begin{align}
    \label{eq:AY} D \cdot H_1 \cdot H_2 \cdots H_{n-1} & = 0 & \text{and}  \\
    \label{eq:BZ} D \cdot D \cdot H_2 \cdots H_{n-1} & = 0
  \end{align}
  hold. Equation~\eqref{eq:AY} holds by assumption. For Equation~\eqref{eq:BZ},
  observe that
  $$
  D \cdot D \cdot H_2 \cdots H_{n-1} = D \cdot H_{1,1} \cdot H_2 \cdots
  H_{n-1} - D \cdot H_{1,2} \cdot H_2 \cdots H_{n-1},
  $$
  where both summands are zero, again by assumption.
\end{proof}

Proposition~\ref{prop:angela}, the main result of this section, is a variant of
this criterion, adapted to the discussion of $\bQ$-factorialisations, where the
ample divisors $H_i$ are replaced by big and nef divisors which are obtained as
the pull-back of ample divisors via the $\bQ$-factorialisation map.

\begin{prop}[Criterion for numerical triviality on $\bQ$-factorialisations]\label{prop:angela}
  Let $\phi: Z' \to Z$ be a small birational morphism of irreducible, normal
  projective varieties of dimension $n \geq 2$. Let $D'$ be a pseudoeffective
  $\bQ$-Cartier $\bQ$-divisor on $Z'$ and assume that there are ample Cartier
  divisors $H_1, \ldots, H_{n-1}$ on $Z$ such that
  \begin{equation}\label{eq:intersectassumption}
    D'\cdot \phi^*(H_1) \cdots \phi^*(H_{n-1}) = 0.
  \end{equation}
  If $Z'$ is $\mathbb Q$-factorial, then $D'$ is numerically trivial.
\end{prop}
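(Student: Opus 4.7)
The plan is to verify the hypothesis of Kleiman's criterion (Lemma~\ref{lem:kleiman}) on $Z'$ itself, namely to show that $D' \cdot H'_1 \cdots H'_{n-1} = 0$ for every $(n-1)$-tuple of ample Cartier divisors on $Z'$. The assumption only provides this vanishing when the $H'_i$ are the specific pullbacks $\phi^*(H_i)$, which are big and nef but not ample on $Z'$. The idea is to bridge the gap by induction, using a cut-down to a surface to replace one pullback at a time by an arbitrary ample divisor on $Z'$.

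Concretely, I would prove by induction on $k \in \{0, 1, \ldots, n-1\}$ the statement
$$ D' \cdot H'_1 \cdots H'_k \cdot \phi^*(H_{k+1}) \cdots \phi^*(H_{n-1}) = 0 \qquad \text{for all ample Cartier } H'_1, \ldots, H'_k \text{ on } Z'. $$
The base case $k = 0$ is the hypothesis, and the case $k = n-1$ combined with Lemma~\ref{lem:kleiman} yields $D' \equiv 0$. For the inductive step, fix $H'_1, \ldots, H'_k$ and cut down to a general complete-intersection surface $S \subset Z'$ obtained from the base-point-free linear systems $|mH'_1|, \ldots, |mH'_{k-1}|, |m\phi^*(H_{k+1})|, \ldots, |m\phi^*(H_{n-1})|$ for $m$ sufficiently divisible. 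Because $\phi$ is small, $\Exc(\phi)$ has codimension at least two in $Z'$, so a dimension count shows that $S \cap \Exc(\phi)$ is at most finite. Hence $\phi|_S \colon S \to \phi(S)$ is a finite birational morphism, and the restriction $(\phi^* H_k)|_S = (\phi|_S)^*(H_k|_{\phi(S)})$ is ample on $S$. By the inductive hypothesis, $D'|_S \cdot (\phi^* H_k)|_S = 0$.

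The argument is then completed by the following surface lemma: on a normal projective surface, any pseudo-effective $\bQ$-Cartier divisor whose intersection with an ample class vanishes must be numerically trivial. After passing to a minimal resolution, this follows by combining Zariski decomposition with Hodge index---the nef positive part vanishes because a nef class perpendicular to ample is zero, and the effective negative part vanishes because an effective class perpendicular to ample is zero. Since $D'|_S$ is pseudo-effective for sufficiently general $S$, we obtain $D'|_S \equiv 0$, and intersecting with $H'_k|_S$ yields the desired vanishing at step $k$. The main obstacle I anticipate is the technical fine print of the cut-down: ensuring normality of $S$, controlling the intersection with $\Exc(\phi)$, and preserving pseudo-effectivity under restriction. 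These are standard Bertini-type issues but require care in the $\bQ$-factorial singular setting, particularly when some of the divisors cutting out $S$ are only big and nef rather than ample; exploiting base-point-freeness of $|m\phi^*(H_j)|$, which follows from base-point-freeness of $|mH_j|$ on $Z$ and the fact that global sections of $\phi^*\sO_Z(H_j)$ pull back from $Z$, should allow the arguments to go through.
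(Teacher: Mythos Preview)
Your approach is correct (modulo the technical care you already flag) but takes a genuinely different route from the paper. The paper does \emph{not} induct or cut down to a surface. Instead, it works directly with curves: given arbitrary ample $A_1,\ldots,A_{n-1}$ on $Z'$, it forms a general complete intersection curve $\Gamma' \subset Z'$ (which, being general, avoids the small exceptional set), pushes it to $\Gamma := \phi(\Gamma') \subset Z$, and then \emph{completes} $\Gamma$ to a complete intersection curve $\Gamma_{\complete} = \Gamma \cup \Gamma_{\restX}$ for the system $(m_iH_i)$ on $Z$ by choosing general members of $|m_iH_i|$ containing $\Gamma$. The hypothesis gives $D' \cdot \phi^{-1}(\Gamma_{\complete}) = 0$; since both $\Gamma'$ and $\phi^{-1}(\Gamma_{\restX})$ are movable curves on $Z'$ and $D'$ is pseudoeffective, both summands $D'\cdot\Gamma'$ and $D'\cdot\phi^{-1}(\Gamma_{\restX})$ are non-negative, hence each vanishes. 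This is exactly what Lemma~\ref{lem:kleiman} requires.

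The paper's curve-completion trick is a one-shot argument that sidesteps the restriction issues you anticipate (normality of $S$, preservation of pseudoeffectivity, finiteness of $S\cap\Exc(\phi)$). Your surface-induction route is more in line with textbook reductions and would adapt more easily to variants of the statement. Two points to tighten in your write-up: first, preservation of pseudoeffectivity under restriction to $S$ requires $S$ to be \emph{very general} (you need $S \not\subset \supp(E_m)$ for countably many effective $E_m \equiv D' + \tfrac{1}{m}A$), so reconcile this with the merely ``general'' choices needed for Bertini and the dimension count; second, in your surface lemma, once you pass to the minimal resolution the pullback of your ample class is only big and nef, so phrase the Hodge-index step for a class of positive self-intersection rather than an ample one.
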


\begin{rem}[Small morphisms]
  In Proposition~\ref{prop:angela} and elsewhere in this paper, we call a
  birational morphism $\psi$ of normal, irreducible projective varieties \emph{small}
  if its exceptional set has codimension at least two.
\end{rem}

\begin{proof}[Proof of Proposition~\ref{prop:angela}]
  Let $B' \subsetneq Z'$ be the $\phi$-exceptional set, and $B := \phi(B')
  \subsetneq Z$ its image.  It suffices to prove Proposition~\ref{prop:angela}
  for a multiple of $D'$. We will therefore assume without loss of generality
  that $D'$ is an integral Cartier divisor.  To show that $D'$ is numerically
  trivial, we aim to apply Kleiman's criterion for numerical triviality,
  Lemma~\ref{lem:kleiman}. To this end, let $A_1, \ldots, A_{n-1}$ be arbitrary
  ample Cartier divisors on $Z'$. Choose numbers $a_1, \ldots, a_{n-1} \in
  \bN^+$ such that the $a_i A_i$ are very ample, choose general elements
  $\Theta_i \in |a_i A_i|$ and consider the complete intersection curve $\Gamma'
  := \Theta_1 \cap \cdots \cap \Theta_{n-1} \subsetneq Z'$. By general choice,
  the curve $\Gamma'$ is smooth and will not intersect with the small set
  $B'$. Lemma~\ref{lem:kleiman} asserts that in order to establish numerical
  triviality of $D'$ it suffices to show that
  \begin{equation}\label{eq:CZ}
    \Gamma' \cdot D' = a_1 \cdots a_{n-1} \cdot A_1 \cdots A_{n-1} \cdot D'=  0.
  \end{equation}
  To establish~\eqref{eq:CZ}, consider the image $\Gamma := \phi(\Gamma')$,
  which is a smooth curve contained in $Z \setminus B$. The curve $\Gamma$ is
  not necessarily a complete intersection curve for $H_1, \ldots, H_{n-1}$, but
  can be completed to become a complete intersection curve, as follows. Choosing
  sufficiently large numbers $m_1, \ldots, m_{n-1} \in \bN^+$, we can assume
  that the linear sub-systems
  $$
  V_i := \{ \Delta \in |m_i H_i| \,:\, \Gamma \subset \supp \Delta \}
  $$
  are positive-dimensional, basepoint-free outside of $\Gamma$, and separate
  points outside of $\Gamma$. We can also assume that the sheaves $\sI_{\Gamma}
  \otimes \sO_Z(m_iH_i)$ are spanned. Choose general elements $\Delta_i \in V_i$
  and consider the complete intersection curve
  $$
  \Gamma_{\complete} := \Delta_1 \cap \cdots \cap \Delta_{n-1} \subset Z.
  $$
  The curve $\Gamma_{\complete}$ is reduced, avoids $B$ and clearly contains
  $\Gamma$. We can thus write
  $$
  \Gamma_{\complete} = \Gamma \cup \Gamma_{\restX} \quad \text{and} \quad
  \phi^{-1}( \Gamma_{\complete} ) = \Gamma' \cup \phi^{-1}(\Gamma_{\restX}),
  $$
  where $\Gamma_{\restX}$ is an irreducible movable curve on $Z$. To end the
  argument, observe that
  \begin{align*}
    0 & = m_1 \cdots m_{n-1} \cdot D'\cdot \phi^*(H_1) \cdots \phi^*(H_{n-1})
    & \text{by Assumption~\eqref{eq:intersectassumption}}\\
    & = D' \cdot \phi^*( \Gamma_{\complete} ) = \underbrace{D' \cdot
      \Gamma'}_{\geq 0} + \underbrace{D' \cdot \phi^*(\Gamma_{\restX})}_{\geq
      0}.
  \end{align*}
  Since $D'$ is pseudoeffective, and since both $\Gamma'$ and $\phi^{-1}
  (\Gamma_{\restX})$ are movable, it follows that both summands are
  non-negative, hence zero. This shows Equation~\eqref{eq:CZ} and finishes the
  proof of Proposition~\ref{prop:angela}.
\end{proof}

\section{The tangent sheaf of varieties with trivial canonical class}
\label{sect:stability}

\subsection{Semistability of the tangent sheaf}

To prepare for the proof of the Decomposition Theorem~\ref{decoII}, we study
stability notions of the tangent sheaf. For canonical varieties with numerically
trivial canonical divisor, we show that the tangent sheaf $\sT_X$ is semistable
with respect to any polarisation, and that $\sT_X$ is stable with respect to one
polarisation if and only if it is stable with respect to any other.

The following result of Miyaoka is crucial. We will refer to
Theorem~\ref{miyaoka} at several places throughout the present paper.

\begin{thm}[Generic semipositivity, \cite{Miyaoka87, Miy85}]\label{miyaoka}
  Let $X$ be a normal projective variety of dimension $n>1$. Assume that $X$ is
  \emph{not} uniruled. Let $H_1, \ldots, H_{n-1}$ be ample line bundles on
  $X$. Then there exists a number $M \in \bN^+$ such that for all $m_1, \ldots,
  m_{n-1} > M$ the following holds.
  \begin{enumerate}
  \item The linear systems $\vert m_j H_j \vert$ are basepoint-free.
  \item If $C = D_1 \cap \cdots \cap D_{n-1} \subsetneq X$ is a curve cut out by
    general elements $D_j \in \vert m_j H_j \vert$, then $C$ is smooth, $X$ is
    smooth along $C$, and $\Omega_X^{[1]} \vert_ C$ is a nef vector bundle on
    $C$. \qed
  \end{enumerate}
\end{thm}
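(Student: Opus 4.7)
The statement combines three ingredients of rather different difficulty, so I would treat them separately and keep the serious work for part~(2).

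\textbf{Basepoint-freeness and the Bertini step.} By Serre's vanishing/very-ampleness theorem, for each $H_j$ there is a bound $M_j$ so that $|m_j H_j|$ is very ample (in particular basepoint-free) for all $m_j>M_j$; set $M:=\max_j M_j$ at the end. For such $m_j$, general members $D_j\in |m_j H_j|$ are irreducible and, by Bertini's theorem applied inductively on the smooth locus $X_{\reg}$, their successive intersections on $X_{\reg}$ are smooth. Since $X$ is normal, $X_{\sing}$ has codimension at least two, so for $n-1\geq 1$ sufficiently general sections cut out a one-dimensional scheme $C$ that is disjoint from $X_{\sing}$, proving that $C$ is a smooth curve lying entirely in $X_{\reg}$. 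In particular $\Omega^{[1]}_X|_C = \Omega^1_{X_{\reg}}|_C$, so we may work with the genuine cotangent bundle from here on.

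\textbf{Reduction of nefness to the smooth, projective case.} Choose a resolution of singularities $\pi:\widetilde X\to X$. Since $X$ is not uniruled and $\pi$ is birational, $\widetilde X$ is not uniruled either. After choosing the $m_j$ still larger, $C$ avoids the image of the $\pi$-exceptional set, and $\pi$ restricts to an isomorphism between a smooth complete-intersection curve $\widetilde C\subset\widetilde X$ and $C$. Because $\Omega^1_{\widetilde X}|_{\widetilde C}\cong\Omega^1_{X_{\reg}}|_C$ under this isomorphism, nefness of $\Omega^1_X|_C$ is equivalent to nefness of $\Omega^1_{\widetilde X}|_{\widetilde C}$, i.e.\ we are reduced to the classical Miyaoka theorem for smooth projective non-uniruled varieties.

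\textbf{The classical nefness statement.} The plan here is to argue by contradiction along Miyaoka's original route, of which I would only sketch the architecture. Suppose $\Omega^1_{\widetilde X}|_{\widetilde C}$ is not nef; equivalently there is a subsheaf $\sL\subset\sT_{\widetilde X}|_{\widetilde C}$ of positive degree. By Mehta--Ramanathan, the Harder--Narasimhan filtration of $\sT_{\widetilde X}$ with respect to the polarisation $(m_1H_1,\dots,m_{n-1}H_{n-1})$ restricts (for large $m_j$) to the Harder--Narasimhan filtration of $\sT_{\widetilde X}|_{\widetilde C}$; hence $\sT_{\widetilde X}$ itself contains a destabilising subsheaf $\sF\subset\sT_{\widetilde X}$ of positive slope. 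Spreading out to a model over a finitely generated $\bZ$-algebra and reducing modulo a large prime $p$, one uses Ekedahl's theorem on Frobenius pull-backs together with repeated Frobenius to replace $\sF$ by a $p$-closed destabilising foliation $\sF_p\subset\sT_{\widetilde X_p}$. Shepherd-Barron's and Miyaoka's analysis of $p$-closed foliations of positive slope then produces a rational curve on $\widetilde X_p$ tangent to $\sF_p$; finally, Miyaoka--Mori bend-and-break plus the lifting theory of rational curves (Koll\'ar--Miyaoka--Mori) propagate such curves to characteristic zero and yield a rational curve on $\widetilde X$, contradicting non-uniruledness.

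\textbf{Expected obstacle.} The elementary parts are (1) and the Bertini/resolution reductions; the hard step is the characteristic-$p$ production of rational curves from the destabilising foliation, i.e.\ the combination of Ekedahl's Frobenius argument, Shepherd-Barron's $p$-closed-foliation result and Miyaoka--Mori bend-and-break. Since the theorem is invoked as a black box and the references~\cite{Miyaoka87, Miy85} are cited, I would only sketch this architecture and refer the reader to the original sources.
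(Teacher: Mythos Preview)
The paper does not prove this theorem at all: the statement ends with a \qed\ box and simply cites \cite{Miyaoka87, Miy85}, treating Miyaoka's generic semipositivity as a black box. So there is no proof in the paper to compare against; you have gone well beyond what the authors do by sketching the argument.

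Your sketch is architecturally faithful to Miyaoka's approach, but the reduction step has a wrinkle worth flagging. After passing to a resolution $\pi:\wtilde X\to X$, the curve $\wtilde C$ you obtain is a complete intersection for the pullbacks $\pi^*H_j$, which are big and nef on $\wtilde X$ but not ample. Hence $\wtilde C$ is \emph{not} a general complete-intersection curve for an ample polarisation on $\wtilde X$, and you cannot simply invoke ``the classical Miyaoka theorem for smooth projective non-uniruled varieties'' as a black box --- that theorem, in its usual formulation, is about curves cut out by ample linear systems. The same issue recurs when you apply Mehta--Ramanathan on $\wtilde X$ with respect to $(m_1H_1,\dots,m_{n-1}H_{n-1})$: standard Mehta--Ramanathan requires ample divisors on the ambient variety. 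This is fixable --- one can either run Mehta--Ramanathan directly on the normal variety $X$ (where the $H_j$ are genuinely ample) and then compare, or appeal to the later extensions of Miyaoka's result to movable curve classes --- but as written the reduction does not quite land where you say it does. Since you explicitly intend only to sketch and defer to the original sources, and since the paper itself does exactly that, this is a minor point rather than a fatal gap.
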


\begin{notation}
  The conclusion of Theorem~\ref{miyaoka} is often rephrased by saying that
  $\Omega_X^{[1]}$ is \emph{generically nef with respect to $H_1, \ldots,
    H_{n-1}$}.
\end{notation}

\begin{thm}[\protect{Mehta-Ramanathan theorem, cf.~\cite[Rem.~6.2]{MR82} and \cite{Flenner84}}]\label{thm:MR}
  In the setup of Theorem~\ref{miyaoka}, if one chooses the number $M$ large
  enough, then $\sT_X$ is semistable with respect to the polarisation $(H_1,
  \ldots, H_{n-1})$ if and only if its restriction $\sT_X|_C$ is semistable as a
  vector bundle on the curve $C$. \qed
\end{thm}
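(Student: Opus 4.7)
The plan is to handle the two directions separately; only the implication that $(H_1,\ldots,H_{n-1})$-semistability of $\sT_X$ forces semistability of $\sT_X|_C$ requires serious work.

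For the easy direction, suppose $\sT_X$ admits a saturated subsheaf $\sF$ with $\mu(\sF) > \mu(\sT_X)$ with respect to $(H_1,\ldots,H_{n-1})$. By Theorem~\ref{miyaoka} the general complete intersection curve $C$ lies in $X_{\reg}$ and, after enlarging $M$ and using Bertini together with generic flatness of $\sF$, I can arrange that $\sF|_C$ is a subbundle of $\sT_X|_C$. The projection formula gives $\deg \sF|_C = m_1\cdots m_{n-1}\cdot c_1(\sF)\cdot H_1\cdots H_{n-1}$ and analogously for $\sT_X|_C$, so the strict slope inequality persists on $C$, contradicting semistability there.

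For the hard direction, suppose that for every large $M$ the general complete intersection curve $C$ carries a destabilising subsheaf of $\sT_X|_C$. I would follow the Mehta--Ramanathan strategy. Let $S := \prod_j |m_j H_j|$ and let $\sC \to S$ be the universal complete intersection, which maps to $X_{\reg}$ by Theorem~\ref{miyaoka}. Consider the relative maximal destabilising subsheaf of the pull-back of $\sT_X$ to $\sC/S$. By Grothendieck's boundedness, only finitely many Hilbert polynomials occur for such subsheaves over all fibres, so after replacing $S$ by a suitable component of the relative Quot-scheme and shrinking to a dense open subset, one obtains a flat family $\sF_s \subset \sT_X|_{C_s}$ of saturated subsheaves of constant rank $r$ and constant slope strictly greater than $\mu(\sT_X|_{C_s})$. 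Since the curves $C_s$ sweep out a dense open subset of $X_{\reg}$, one can descend this family to a reflexive subsheaf $\sF \subset \sT_X$ of rank $r$ on $X$. Its slope inherits the destabilising inequality from the fibres, contradicting semistability of $\sT_X$.

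The main obstacle is the descent step: one must ensure that for $M$ sufficiently large the family of fibrewise destabilising subsheaves is genuinely the restriction of a single subsheaf defined on $X$. This is the heart of the Mehta--Ramanathan argument and is where the asymptotic condition on $M$ is used; Flenner's effective form of the theorem supplies the explicit numerical bound. A secondary subtlety is that $X$ is only normal rather than smooth; however, since each $C_s$ lies in $X_{\reg}$, the entire relative Quot-scheme construction can be carried out over the smooth locus, and the resulting subsheaf then extends across $\sing(X)$ by reflexivity, using that $\codim_X \sing(X) \geq 2$.
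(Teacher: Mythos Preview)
The paper does not supply its own proof of this statement: the theorem is quoted from the literature (Mehta--Ramanathan and Flenner) and closed with a \qed, so there is no argument in the paper to compare against. Your outline is a faithful sketch of precisely the argument in those references---the easy direction by restricting a global destabilising subsheaf to $C$, and the hard direction via the relative Quot-scheme/family of maximal destabilising subsheaves and descent for $m_i \gg 0$---and you correctly flag both the genuine technical crux (the descent step, which is where the asymptotic hypothesis on $M$ enters) and the adjustment needed in the singular setting (carry out the construction over $X_{\reg}$, then extend across the small singular locus by reflexivity). As written it is of course only a sketch, deferring the substantial work to \cite{MR82} and \cite{Flenner84}; but that is exactly what the paper itself does.
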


The well-known semistability of the tangent bundle is an immediate consequence
of Miyaoka's generic semipositivity result.

\begin{prop}[Semistability of the tangent sheaf]\label{prop:semistable}
  Let $X$ be a normal projective variety having at worst canonical
  singularities. If $K_X$ is numerically trivial, then $\sT_X$ is semistable
  with respect to any polarisation.
\end{prop}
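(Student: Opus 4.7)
\medskip

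\noindent\textbf{Proof plan.} The proposition is essentially a formal consequence of the generic semipositivity result (Theorem~\ref{miyaoka}) together with Mehta--Ramanathan (Theorem~\ref{thm:MR}), once one has set up the degree bookkeeping correctly. The one-dimensional case being trivial ($X$ is an elliptic curve and $\sT_X \cong \sO_X$), we may assume $\dim X = n \geq 2$ and we fix an arbitrary polarisation $(H_1,\ldots,H_{n-1})$.

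The first step is to verify that $X$ is \emph{not} uniruled, so that Theorem~\ref{miyaoka} becomes available. For this I would pass to a resolution $\pi:\wtilde X\to X$; since $X$ has canonical singularities, $K_{\wtilde X} = \pi^*K_X + E$ with $E$ effective and $\pi$-exceptional, and since $K_X \equiv 0$ the class $K_{\wtilde X}$ is effective, in particular pseudoeffective. By the Boucksom--Demailly--P\u{a}un--Peternell theorem, $\wtilde X$ is not uniruled, and hence neither is $X$.

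Next, I would apply Theorem~\ref{miyaoka} to obtain, after enlarging $M$ if needed so that Theorem~\ref{thm:MR} also applies, a general complete intersection curve $C = D_1 \cap \cdots \cap D_{n-1}$ with $D_j \in |m_j H_j|$ such that $C$ lies in $X_{\reg}$, is smooth, and $\Omega^{[1]}_X\smallresto{C}$ is a \emph{nef} vector bundle on $C$. Since $K_X \equiv 0$, the degree of this restriction is
\[
\deg\bigl(\Omega^{[1]}_X\smallresto{C}\bigr) \;=\; K_X \cdot C \;=\; m_1\cdots m_{n-1}\,\bigl(K_X \cdot H_1\cdots H_{n-1}\bigr) \;=\; 0.
\]
A nef vector bundle of degree zero on a smooth projective curve is automatically semistable: every quotient bundle of a nef bundle is nef, hence of non-negative degree, so every subsheaf $\sF \subset \Omega^{[1]}_X\smallresto{C}$ satisfies $\deg \sF \leq 0 = \mu\bigl(\Omega^{[1]}_X\smallresto{C}\bigr)$. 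Consequently $\Omega^{[1]}_X\smallresto{C}$ is semistable, and because semistability of a vector bundle on a smooth curve is preserved under dualisation, so is $\sT_X\smallresto{C}$.

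Finally, I would invoke Theorem~\ref{thm:MR} (Mehta--Ramanathan): having chosen the $m_j$ large enough, semistability of $\sT_X\smallresto{C}$ on the general complete intersection curve $C$ is equivalent to semistability of $\sT_X$ with respect to the polarisation $(H_1,\ldots,H_{n-1})$. Since the polarisation was arbitrary, this completes the proof. The ``main obstacle'' here is not really an obstacle but a conceptual point: every nontrivial input (non-uniruledness, generic nefness, Mehta--Ramanathan, duality of semistability on curves) is standard, so the argument reduces to correctly chaining them and observing that the hypothesis $K_X \equiv 0$ forces both $\mu(\sT_X) = 0$ and $\deg(\Omega^{[1]}_X\smallresto{C}) = 0$.
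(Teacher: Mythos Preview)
Your proof is correct and follows essentially the same approach as the paper: non-uniruledness of $X$, Miyaoka's generic semipositivity to get $\Omega^{[1]}_X\smallresto{C}$ nef, the observation that $K_X\cdot C=0$ forces semistability of $\sT_X\smallresto{C}$, and then Mehta--Ramanathan. The paper's version is terser (it does not spell out the BDPP step or the nef-of-degree-zero-implies-semistable argument), but the logical structure is identical.
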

\begin{proof}
  Let $(H_1, \ldots, H_{n-1})$ be arbitrary ample Cartier divisors on
  $X$. Choose numbers $M, m_1, \ldots, m_{n-1}$ and construct a general complete
  intersection curve $C \subsetneq X$ as in Theorems~\ref{miyaoka} and
  \ref{thm:MR}.

  It follows from numerical triviality of $K_X$ and from the assumption on the
  singularities that $X$ is not uniruled. Theorem~\ref{miyaoka} therefore
  asserts that the restriction $\Omega_X^{[1]}|_C$ is nef.  Since additionally
  $K_X \cdot C = 0$ by assumption, it follows that the bundle $\sT_X|_C$ is
  semistable. The Mehta-Ramanathan Theorem~\ref{thm:MR} then shows that $\sT_X $
  is semistable with respect to $(H_1, \ldots, H_{n-1})$.
\end{proof}

\begin{rem}
  Although semistable, the tangent sheaf of a variety with trivial canonical
  bundle might not be stable. To give an easy example, the tangent sheaf of the
  product of two such varieties is not stable.
\end{rem}

\subsection{Pseudoeffectivity of quotients of $\Omega^{[p]}_X$}
\label{ssec:pseff}

The proof of our main result uses the following criterion for the
pseudoeffectivity of Weil divisors on $\bQ$-factorial spaces. In case where $X$
is smooth, this has been shown by Campana-Peternell.

\begin{prop}[\protect{Pseudoeffectivity of quotients of $\Omega_X^{[p]}$, cf.~\cite[Thm.~0.1]{CP11}}]\label{prop:pseudoeffectivitygeneralised}
  Let $X$ be a normal, $\bQ$-factorial projective variety and let $D$ be a Weil
  divisor on $X$. Assume that there exists a number $0 \leq p \leq \dim X$ and a
  non-trivial sheaf morphism $\psi: \Omega_X^{[p]} \to \sO_X(D)$. If $X$ is not
  uniruled, then $D$ is pseudoeffective.
\end{prop}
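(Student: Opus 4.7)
The plan is to reduce to the smooth case, established by Campana-Peternell~\cite[Thm.~0.1]{CP11}, by pulling the morphism back to a resolution of singularities. Let $\pi: \wtilde X \to X$ be a resolution. Then $\wtilde X$ is smooth projective; moreover, $\wtilde X$ is not uniruled, since a covering family of rational curves on $\wtilde X$ would descend under $\pi$ to a covering family of rational curves on $X$. Since $X$ is $\bQ$-factorial, $D$ is $\bQ$-Cartier, and, after multiplication by a positive integer, I may assume that $D$ and $\pi^* D$ are integral Cartier.

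Next, lift $\psi$ to the resolution. Its restriction $\psi|_{X_{\reg}}$ pulls back, via the isomorphism $\pi^{-1}(X_{\reg}) \to X_{\reg}$, to a non-trivial morphism $\wtilde\psi_0 : \Omega^p_{\wtilde X}|_{\wtilde X_0} \to \sO_{\wtilde X}(\pi^* D)|_{\wtilde X_0}$, where $\wtilde X_0 := \pi^{-1}(X_{\reg})$. The complement $\wtilde X \setminus \wtilde X_0$ is contained in the $\pi$-exceptional set; let $E_1, \ldots, E_r$ be its prime divisor components. Since $\Omega^p_{\wtilde X}$ and $\sO_{\wtilde X}(\pi^* D)$ are locally free, $\wtilde\psi_0$ has a bounded order of poles along each $E_i$, and absorbing these poles yields non-negative integers $n_i$ together with a non-trivial extension
$$
\wtilde \psi : \Omega^p_{\wtilde X} \longrightarrow \sO_{\wtilde X}\bigl(\pi^* D + N\bigr),
$$
where $N := \sum_{i} n_i E_i$ is an effective, $\pi$-exceptional divisor.

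The smooth case~\cite[Thm.~0.1]{CP11} applied to $\wtilde X$ now gives that $\pi^* D + N$ is pseudoeffective on $\wtilde X$. To descend to $X$, note that $\pi_*(\pi^* D + N) = D$, since $N$ is $\pi$-exceptional and $\pi$ is birational. For any ample divisor $H$ on $X$ and any $\epsilon > 0$, the divisor $(\pi^* D + N) + \epsilon\,\pi^* H$ is the sum of a pseudoeffective and a big divisor, hence big, and therefore $\bQ$-linearly equivalent to an effective $\bQ$-divisor on $\wtilde X$. Pushing forward shows that $D + \epsilon H$ is $\bQ$-effective on $X$; letting $\epsilon \searrow 0$ concludes that $D$ is pseudoeffective.

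The main obstacle is the extension step: while sheaf-theoretic extension across an exceptional divisor with bounded pole order is standard, some care is needed to ensure that the correction divisor $N$ is both effective and $\pi$-exceptional (and that $\wtilde \psi$ remains non-trivial). Once this is in place, the appeal to~\cite[Thm.~0.1]{CP11} and the descent of pseudoeffectivity via push-forward under a birational morphism are direct.
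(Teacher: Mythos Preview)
Your approach is essentially the paper's: lift $\psi$ to a resolution with bounded poles along the exceptional divisor, invoke \cite[Thm.~0.1]{CP11} on the smooth model, and push pseudoeffectivity down via $\bQ$-factoriality.

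One step is not right as written. The reduction ``after multiplication by a positive integer, I may assume that $D$ and $\pi^*D$ are integral Cartier'' does not work: replacing $D$ by $mD$ destroys the datum $\psi:\Omega_X^{[p]}\to\sO_X(D)$, since there is no natural map $\sO_X(D)\to\sO_X(mD)$ in general, and you still need $\psi$ in the next paragraph. The paper handles this by invoking Rossi \cite[Thm.~3.5]{Rossi68} to choose the resolution so that the reflexive pull-back $\pi^{[*]}\sO_X(D)$ is already invertible, say $\sO_{\wtilde X}(\wtilde D)$ for an integral divisor $\wtilde D$; your argument then runs verbatim with $\wtilde D$ in place of $\pi^*D$. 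An equally valid fix in your framework: keep $D$, and on $\wtilde X$ take $\wtilde D$ to be any integral divisor restricting to $\pi^*(D|_{X_{\reg}})$ on $\wtilde X_0$ (for instance $\lceil\frac{1}{m}\pi^*(mD)\rceil$); any two such choices differ by a $\pi$-exceptional divisor, which is absorbed into your correction term $N$.

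A second, smaller point: strictly speaking \cite[Thm.~0.1]{CP11} applies to invertible \emph{quotients} of $(\Omega^1_{\wtilde X})^{\otimes p}$, so one should pass to $\sL:=(\Image\wtilde\psi)^{**}$, conclude that $\sL$ is pseudoeffective, and then observe that the inclusion $\sL\hookrightarrow\sO_{\wtilde X}(\wtilde D+N)$ forces $\wtilde D+N$ to be pseudoeffective as well. The paper makes this step explicit.
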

\begin{proof}
  Assume that $X$ is not uniruled, and that a non-trivial sheaf morphism $\psi:
  \Omega_X^{[p]} \to \sO_X(D)$ is given. The existence of a resolution of
  singularities combined with a classical result of Rossi
  \cite[Thm.~3.5]{Rossi68} shows that there exists a strong log resolution of
  singularities $\pi: \wtilde X \to X$ with the additional property that
  $\pi^{[*]} \sO_{ X}(D)$ is locally free. The $\pi$-exceptional set $E
  \subsetneq \wtilde X$ is then of pure codimension one, and has simple normal
  crossing support. Finally, write $\pi^{[*]} \sO_{X}(D) = \sO_{\wtilde
    X}(\wtilde D)$, where $\wtilde D$ is a divisor on $\wtilde X$ that agrees
  with the strict transform $\pi_*^{-1}(D)$ outside of the $\pi$-exceptional set
  $E$.

  Since the two sheaves $\pi^{[*]}\Omega_X^{[p]}$ and $\Omega_{\wtilde X}^p$ are
  isomorphic outside of $E$, there exists a number $m \in \bN^+$ and a sheaf
  morphism
  $$
  \wtilde \psi : \Omega_{\wtilde X}^p \to \sO_{\wtilde X}(\wtilde D +mE)
  $$
  which agrees on $\wtilde X \setminus E$ with the pull-back of $\psi$. If $\sL
  := (\Image \wtilde \psi)^{**}$ denotes the reflexive hull of the image sheaf,
  then $\sL$ is invertible by \cite[Lem.~1.1.15]{OSS}. Recalling that
  $\Omega_{\wtilde X}^p$ is a quotient of $(\Omega_{\wtilde X}^1)^{\otimes p}$,
  it follows from \cite[Thm.~0.1]{CP11} that the line bundle $\sL$ is in fact
  pseudoeffective. It follows that $\wtilde D+mE$ is pseudoeffective as well,
  since it contains the pseudoeffective subsheaf $\sL$.

  Since $X$ is assumed to be $\bQ$-factorial, it is clear that the
  cycle-theoretic push-forward of any pseudoeffective divisor on $\wtilde X$ is
  $\bQ$-Cartier and again pseudoeffective. Using that $\pi_* \wtilde D = D$,
  this shows our claim.
\end{proof}

\subsection{Stability of the tangent sheaf}
\label{ssec:xxa}

While Theorem~\ref{miyaoka} and Proposition~\ref{prop:semistable} are fairly
standard today, the following result, which shows that stability of
the tangent bundle is independent of the chosen polarisation, is new
to the best of our knowledge. We feel that it might be of independent interest.

\begin{prop}[Stability of $\sT_X$ does not depend on polarisation]\label{prop:indepSS}
  Let $X$ be a normal projective variety having at worst canonical
  singularities. Assume that $K_X$ is numerically equivalent to zero. Let $h_1
  := (H_1^{(1)}, \ldots, H_{n-1}^{(1)})$ and $h_2 := (H_1^{(2)}, \ldots,
  H_{n-1}^{(2)})$ be two sets of ample polarisations.  If $\sT_X$ is
  $h_1$-stable, then it is also $h_2$-stable.
\end{prop}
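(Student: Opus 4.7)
The plan is to argue by contradiction. Suppose $\sT_X$ is $h_1$-stable but not $h_2$-stable. Proposition~\ref{prop:semistable} guarantees $h_2$-semistability, so there exists a proper, non-zero, saturated subsheaf $\sF \subsetneq \sT_X$ with $\mu_{h_2}(\sF) = \mu_{h_2}(\sT_X) = 0$. If $r := \rank \sF$ and $L$ is the Weil divisor class with $\det \sF \cong \sO_X(L)$, this reads $L \cdot H_1^{(2)} \cdots H_{n-1}^{(2)} = 0$. The strategy is to use Propositions~\ref{prop:pseudoeffectivitygeneralised} and \ref{prop:angela} to promote this single vanishing to numerical triviality of $L$. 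Once this is achieved, $\mu_{h_1}(\sF) = 0 = \mu_{h_1}(\sT_X)$, which contradicts $h_1$-stability. The case $n = 1$ is trivial because $\sT_X$ then has rank one.

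The first obstacle is that Proposition~\ref{prop:pseudoeffectivitygeneralised} requires a $\bQ$-factorial ambient space, whereas $X$ need not be $\bQ$-factorial. To circumvent this I will pass to a small $\bQ$-factorialisation $\phi \colon X' \to X$, whose existence for canonical (hence klt) varieties is a standard consequence of the minimal model program. Since $\phi$ is small and $X$ is normal, the reflexive pull-back $\phi^{[*]} \sT_X$ coincides with $\sT_{X'}$, and the saturation $\sF' \subset \sT_{X'}$ of $\phi^* \sF$ is again a proper, non-zero, saturated subsheaf whose determinant corresponds to the Weil divisor $L' := \phi^* L$, well-defined as a strict transform because $\phi$ does not contract any divisor. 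Birationality implies that $X'$ again has canonical singularities with $K_{X'} \equiv 0$, and in particular is not uniruled.

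Next, dualising the saturated inclusion $\det \sF' \hookrightarrow \bigwedge^{[r]} \sT_{X'}$ produces a non-trivial sheaf morphism
$$
\Omega^{[r]}_{X'} \longrightarrow \sO_{X'}(-L').
$$
Applied to the $\bQ$-factorial, non-uniruled variety $X'$, Proposition~\ref{prop:pseudoeffectivitygeneralised} yields pseudoeffectivity of the Weil divisor $-L'$. On the other hand, the projection formula together with the choice of $\sF$ gives
$$
(-L') \cdot \phi^*\bigl(H_1^{(2)}\bigr) \cdots \phi^*\bigl(H_{n-1}^{(2)}\bigr) = 0.
$$
Both hypotheses of Proposition~\ref{prop:angela} are thus in place, applied to $\phi$, $D' = -L'$, and the ample Cartier divisors $H_i^{(2)}$ on $X$. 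The proposition then forces $-L'$, and hence $L'$, to be numerically trivial on $X'$.

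It remains to transport the conclusion back to $X$. The projection formula once more yields
$$
L \cdot H_1^{(1)} \cdots H_{n-1}^{(1)} = L' \cdot \phi^*\bigl(H_1^{(1)}\bigr) \cdots \phi^*\bigl(H_{n-1}^{(1)}\bigr) = 0,
$$
so $\mu_{h_1}(\sF) = 0 = \mu_{h_1}(\sT_X)$, contradicting $h_1$-stability of $\sT_X$. The main obstacle I anticipate is the careful interplay between reflexive hulls, saturations and small birational modifications: one must verify that $\sF$ really lifts across $\phi$ to a subsheaf of $\sT_{X'}$ whose determinant agrees with the strict transform $\phi^* L$ as a Weil divisor, so that the reflexive dualisation genuinely produces the line sheaf $\sO_{X'}(-L')$ to which Proposition~\ref{prop:pseudoeffectivitygeneralised} can be applied.
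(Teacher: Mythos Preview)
Your proof is correct and follows essentially the same route as the paper's: both pass to a $\bQ$-factorialisation $\phi\colon X'\to X$, lift a saturated $h_2$-destabilising subsheaf, use Proposition~\ref{prop:pseudoeffectivitygeneralised} to show the anti-determinant is pseudoeffective, then apply Proposition~\ref{prop:angela} to conclude numerical triviality and hence $h_1$-destabilisation. The only cosmetic differences are that the paper takes the first term of a Jordan--H\"older filtration (rather than an arbitrary saturated slope-zero subsheaf) and uses the reflexive pull-back $\phi^{[*]}\sS$ (rather than the saturation of $\phi^*\sF$); since $\phi$ is small these agree in codimension one and hence have the same determinant, so the distinction is immaterial.
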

\begin{proof}
  Suppose that $\sT_X$ is \emph{not} $h_2$-stable. Even though not stable,
  recall from Proposition~\ref{prop:semistable} that $\sT_X$ is semistable with
  respect to $h_2$. Choose a Jordan-Hölder filtration of $\sT_X$ with respect to
  $h_2$ and let $0 \varsubsetneq \sS \varsubsetneq \sT_X$ be its first term,
  cf.~\cite[Sect.~1.5]{HL97}. The sheaf $\sS$ is then $h_2$-stable, and
  saturated as a subsheaf of $\sT_X$. Semistability of $\sT_X$ implies that
  $\sS$ has $h_2$-slope zero. In other words, $\mu_{h_2}(\sS) = 0$.
  
  Next, let $\phi: X' \to X$ be a $\bQ$-factorialisation, that is, a small
  birational morphism where $X'$ is $\bQ$-factorial and has at worst canonical
  singularities. The existence of $\phi$ is established in
  \cite[Lem.~10.2]{BCHM06}. Consider the reflexive pull-back sheaf $\sS' :=
  \phi^{[*]} \sS$. Since $\sS'$ injects into $\sT_{X'}$ outside of the small
  $\pi$-exceptional set, and since both sheaves are reflexive, we obtain an
  injection $ \sS' \into \sT_{X'} = \phi^{[*]} \sT_X$.

  We claim that $\det \sS'$ is numerically trivial on $X'$. To this end, recall
  from Proposition~\ref{prop:pseudoeffectivitygeneralised} that $\bigl( \det
  \sS' \bigr)^*$ is pseudoeffective. On the other hand, since $\sS'$ and
  $\phi^*(\sS)$ agree outside of the $\pi$-exceptional set, we obtain
  $$
  \bigl( \det \sS' \bigr)^* \cdot \phi^*\bigl( H_1^{(2)} \bigr) \cdots \phi^*\bigl(
  H_{n-1}^{(2)} \bigr) = -\mu_{h_2}(\sS) = 0.
  $$
  Together with Proposition~\ref{prop:angela}, these two observations show that
  $(\det \sS')^*$ and $\det \sS'$ are numerically trivial, as claimed.

  Using numerical triviality of $\det \sS'$, the same line of reasoning now
  gives
  $$
  \mu_{h_1}(\sS) = \bigl( \det \sS' \bigr) \cdot \phi^*\bigl( H_1^{(1)} \bigr)
  \cdots \phi^*\bigl( H_{n-1}^{(1)} \bigr) = 0,
  $$
  showing that $\sT_X$ is \emph{not} $h_1$-stable. This contradiction concludes
  the proof.
\end{proof}

\section{Differential forms on varieties with trivial canonical class}
\label{sec:kawamata}

\subsection{Non-degeneracy of the wedge product}
\label{sec:nondegen}

Differential forms on smooth varieties with trivial canonical class were studied
by Bogomolov \cite{Bogomolov74}.  In this section we apply the Extension Theorem~\ref{thm:ext}
to study reflexive differential forms on singular varieties with trivial
canonical classes, following an approach discussed in \cite{Pe94}. The results
obtained in this section will play an important role in the proof of the
Decomposition Theorem~\ref{decoII}, which is given in the subsequent
Section~\ref{sect:deco}.

\begin{prop}[Non-degeneracy of the wedge product]\label{prop:forms-1}
  Let $X$ be a normal $n$-dimensional projective variety $X$ having at worst
  canonical singularities. Suppose that $\omega_X \cong \sO_X$. If $0 \leq p
  \leq n$ is any number, then the natural pairing given by the wedge product,
  $$
  \bigwedge : H^0\bigl( X,\, \Omega^{[p]}_X \bigr) \times H^0\bigl( X,\,
  \Omega^{[n-p]}_X \bigr) \longrightarrow H^0\bigl( X,\, \omega_X \bigr) \cong
  \bC,
  $$
  is non-degenerate.
\end{prop}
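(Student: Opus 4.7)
The plan is to reduce to a smooth projective model via the Extension Theorem~\ref{thm:ext} and then to combine Hodge-theoretic complex conjugation on the resolution with Serre duality on the singular space $X$ itself. The hypothesis $\omega_X \cong \sO_X$ enters crucially: it trivialises $\omega_X$ in Serre duality applied to the structure sheaf $\sO_X$.

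First I would fix a resolution $\pi\colon \wtilde X \to X$. The Extension Theorem yields canonical isomorphisms $H^0(X, \Omega^{[p]}_X) \cong H^0(\wtilde X, \Omega^p_{\wtilde X})$ for every $p$, and since canonical singularities are rational, also $\pi_*\omega_{\wtilde X} = \omega_X$ and $H^0(\wtilde X, \omega_{\wtilde X}) \cong H^0(X, \omega_X) \cong \bC$. These identifications are compatible with the natural wedge pairings over $X_{\reg}$, so the problem reduces to showing non-degeneracy of
\[
H^0(\wtilde X, \Omega^p_{\wtilde X}) \times H^0(\wtilde X, \Omega^{n-p}_{\wtilde X}) \xrightarrow{\wedge} H^0(\wtilde X, \omega_{\wtilde X}) \cong \bC
\]
on the smooth projective variety $\wtilde X$.

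Given a nonzero $\alpha \in H^0(\wtilde X, \Omega^p_{\wtilde X}) = H^{p,0}(\wtilde X)$, complex conjugation in the Hodge decomposition produces a nonzero class $\bar\alpha \in H^{0,p}(\wtilde X) = H^p(\wtilde X, \sO_{\wtilde X})$. Rational singularities identify $H^p(\wtilde X, \sO_{\wtilde X}) \cong H^p(X, \sO_X)$, and because $\omega_X \cong \sO_X$ is invertible, $X$ is Gorenstein, so Serre duality yields a perfect cup-product pairing
\[
H^p(X, \sO_X) \times H^{n-p}(X, \sO_X) \longrightarrow H^n(X, \sO_X) \cong \bC.
\]
I would then pick $\gamma \in H^{n-p}(X, \sO_X) = H^{0, n-p}(\wtilde X)$ with $\bar\alpha \cup \gamma \neq 0$ and set $\beta := \bar\gamma \in H^{n-p, 0}(\wtilde X) = H^0(\wtilde X, \Omega^{n-p}_{\wtilde X})$. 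Since cup product commutes with complex conjugation, $\overline{[\alpha \wedge \beta]} = \bar\alpha \cup \bar\beta = \bar\alpha \cup \gamma \neq 0$, and therefore $\alpha \wedge \beta \neq 0$ in $H^0(\wtilde X, \omega_{\wtilde X})$. The non-degeneracy in the other argument follows by the symmetric reasoning.

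The main conceptual obstacle is bridging the gap between the algebraic wedge pairing on global holomorphic forms and the abstract Serre-duality pairing on $\sO$-cohomology; these are a priori different pairings, and on a general smooth projective variety the wedge pairing on $H^{*,0}$ is \emph{not} perfect. Hodge-theoretic conjugation on the smooth resolution $\wtilde X$ provides exactly this bridge, and the Extension Theorem~\ref{thm:ext} guarantees that no information is lost when passing between reflexive forms on $X$ and honest holomorphic forms on $\wtilde X$, while rational singularities align the two versions of $H^p(\sO)$.
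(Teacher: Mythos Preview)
Your proof is correct and follows essentially the same strategy as the paper: reduce to the resolution $\wtilde X$ via the Extension Theorem, invoke Hodge-theoretic complex conjugation there, and combine rational singularities with Serre duality and the triviality of $\omega_X$. The one difference is that the paper applies Serre duality on $\wtilde X$ and then proves separately that wedging with a generator $\tau \in H^0(\wtilde X,\omega_{\wtilde X})$ induces isomorphisms $H^{0,q}(\wtilde X)\to H^{n,q}(\wtilde X)$ (deduced from $\sO_X\cong\omega_X$ via rational singularities and Grauert--Riemenschneider vanishing, with an explicit \v{C}ech-cohomology comparison), whereas you apply Serre duality directly on the Cohen--Macaulay Gorenstein variety $X$; your packaging is slightly more economical but the ingredients are identical.
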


\begin{rem}
  If $\imath: X_{\reg} \hookrightarrow X$ is the inclusion of the smooth locus
  into $X$, recall from Definition~\ref{def:reflDiff} that $\Omega^{[p]}_X =
  \imath_* \Omega^p_{X_{\reg}}$.  Given a non-zero form $\eta \in
  H^0\bigl(X_{\reg},\, \Omega^p_{X_{\reg}}\bigr)$,
  Proposition~\ref{prop:forms-1} simply says that there exists a
  ``complementary'' form $\phi \in H^0\bigl( X_{\reg},\,
  \Omega^{n-p}_{X_{\reg}}\bigr)$, such that $\eta \wedge \phi$ extends to a
  non-zero, hence nowhere vanishing section of $\sO_X \cong \omega_X \cong
  \Omega_X^{[n]}$.
\end{rem}

\begin{rem}
  Proposition~\ref{prop:forms-1} has been shown in relevant cases in
  \cite[Prop.~5.8]{Pe94}. Our proof of Proposition~\ref{prop:forms-1} follows
  \cite{Pe94} closely.
\end{rem}

\subsubsection{Proof of Proposition~\ref*{prop:forms-1}}

We end the present Section~\ref{sec:nondegen} with a proof of
Proposition~\ref{prop:forms-1}. To improve readability, the proof is subdivided
into six, mostly independent steps.

\subsubsection*{Step 1 in the proof of Proposition~\ref*{prop:forms-1}: Setup of notation} 

We choose a desingularisation $\pi: \wtilde X \to X$ of $X$. Denote the
$\pi$-exceptional set by $E \subset \wtilde X$ and fix a non-zero section
$\sigma \in H^0\bigl(X, \omega_X \bigr)$. Since $\omega_X$ is invertible, and
since $X$ has canonical singularities, it follows immediately from the
definition that the pull-back of $\sigma$ is a holomorphic $n$-form on $\wtilde
X$, possibly with zeroes along the exceptional set, say
$$
\tau := \pi^*(\sigma) \in H^0\bigl(\wtilde X,\, \omega_{\wtilde X} \bigr).
$$
Because $H^0\bigl(\wtilde X,\, \omega_{\wtilde X} \bigr) \cong H^0\bigl(X,\,
\omega_{X} \bigr) = \C \cdot \sigma$, the form $\tau$ clearly spans the vector
space $H^0\bigl(\wtilde X,\, \omega_{\wtilde X} \bigr)$. By the Extension
Theorem~\ref{thm:ext} we have $\pi_* \Omega_{\wtilde X}^p = \Omega_X^{[p]}$. To
prove Proposition~\ref{prop:forms-1}, it is therefore sufficient to prove the
following claim.

\begin{claim}\label{claim:prop61}
  Given any holomorphic $p$-form $\alpha \in H^0\bigl(\wtilde X, \,
  \Omega_{\wtilde X}^p \bigr)$ there exists a ``complementary'' form $\beta \in
  H^0\bigl(\wtilde X, \, \Omega_{\wtilde X}^{n-p} \bigr)$ such that $\alpha
  \wedge \beta = \tau$.
\end{claim}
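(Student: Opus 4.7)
My plan is to establish Claim~\ref{claim:prop61} through a Hodge-theoretic duality argument that reformulates the wedge pairing in terms of Serre duality on $\wtilde X$. Since $H^0(\wtilde X, \omega_{\wtilde X}) \cong \bC \cdot \tau$ is one-dimensional, the claim is equivalent to non-degeneracy of the bilinear form $\Phi : H^0(\wtilde X, \Omega^p_{\wtilde X}) \times H^0(\wtilde X, \Omega^{n-p}_{\wtilde X}) \to \bC$ defined by $\alpha \wedge \beta = \Phi(\alpha, \beta) \cdot \tau$, since any witness $\beta'$ can then be rescaled to satisfy $\alpha \wedge \beta = \tau$. The first step is to verify the numerical equality $h^0(\wtilde X, \Omega^p_{\wtilde X}) = h^0(\wtilde X, \Omega^{n-p}_{\wtilde X})$, so that non-degeneracy in one argument of $\Phi$ is equivalent to non-degeneracy in the other. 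Serre duality on $\wtilde X$ combined with Hodge symmetry yields $h^{p,0}(\wtilde X) = h^{n-p}(\wtilde X, \omega_{\wtilde X})$; Grauert--Riemenschneider vanishing (applicable since $X$ is canonical, hence rational) together with $\pi_* \omega_{\wtilde X} = \omega_X \cong \sO_X$ and the Leray spectral sequence gives $h^{n-p}(\wtilde X, \omega_{\wtilde X}) = h^{n-p}(X, \sO_X)$; finally, Serre duality on the Cohen--Macaulay variety $X$ with $\omega_X \cong \sO_X$ yields $h^{n-p}(X, \sO_X) = h^p(X, \sO_X)$, and the analogous chain for $n-p$ identifies the latter with $h^{n-p, 0}(\wtilde X)$.

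The core of the argument is to exhibit $\Phi$ as a non-zero multiple of the Serre duality pairing. For each $\beta \in H^0(\wtilde X, \Omega^{n-p}_{\wtilde X})$, the smooth form $\beta \wedge \overline{\tau}$ is $\overline{\partial}$-closed of type $(n-p, n)$ and therefore defines a Dolbeault class $[\beta \wedge \overline{\tau}] \in H^n(\wtilde X, \Omega^{n-p}_{\wtilde X}) = H^{n-p, n}(\wtilde X)$. A direct integration gives the identity
\[
\int_{\wtilde X} \alpha \wedge \beta \wedge \overline{\tau} \;=\; \Phi(\alpha, \beta) \cdot \int_{\wtilde X} \tau \wedge \overline{\tau},
\]
and the right-hand integral is non-zero because, up to a sign $i^{n^2}$, the form $\tau \wedge \overline{\tau}$ is a non-negative volume form, strictly positive on the dense open set where $\tau$ does not vanish. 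Hence $\Phi$ coincides, up to a fixed non-zero scalar, with the Serre duality pairing $H^0(\wtilde X, \Omega^p_{\wtilde X}) \otimes H^n(\wtilde X, \Omega^{n-p}_{\wtilde X}) \to \bC$ composed with the cup-product map
\[
\cup\, [\overline{\tau}] \,:\, H^0\bigl(\wtilde X, \Omega^{n-p}_{\wtilde X}\bigr) \longrightarrow H^n\bigl(\wtilde X, \Omega^{n-p}_{\wtilde X}\bigr), \quad \beta \longmapsto [\beta \wedge \overline{\tau}].
\]
Since Serre duality is a perfect pairing, non-degeneracy of $\Phi$ is equivalent to surjectivity of this cup-product map, which by the dimension count of the previous paragraph is in turn equivalent to its bijectivity.

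The main obstacle is to prove that $\cup\, [\overline{\tau}]$ is an isomorphism; this is the step where the hypothesis $\omega_X \cong \sO_X$ enters beyond the numerical dimension count. Observe that $[\overline{\tau}]$ generates the one-dimensional space $H^{0,n}(\wtilde X) = H^n(X, \sO_X)$---its Serre pairing with $\sigma$ being non-zero---so we are cupping with a fixed generator of that space. In the smooth Calabi--Yau case, where $\tau$ trivializes $\omega_{\wtilde X}$ globally, bijectivity is immediate from Hodge theory; in our setting $\tau$ vanishes along the $\pi$-exceptional divisor with multiplicities equal to the discrepancies of $X$, and one must argue more carefully. My plan is to combine the $\partial\overline{\partial}$-lemma on the compact Kähler manifold $\wtilde X$---which ensures that any $\overline{\partial}$-exact, $d$-closed form is automatically $d$-exact---with a positivity computation involving $\beta$, $\overline{\beta}$ and a suitable power of a Kähler class, following the strategy of~\cite{Pe94}, in order to exclude the existence of a non-zero $\beta \in H^0(\wtilde X, \Omega^{n-p}_{\wtilde X})$ with $[\beta \wedge \overline{\tau}] = 0$. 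This positivity/exactness step is the technical heart of the proof.
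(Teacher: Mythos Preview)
Your overall architecture is the same as the paper's: reduce Claim~\ref{claim:prop61} to the non-degeneracy of the pairing $\rho(\alpha,\beta)=\int_{\wtilde X}\alpha\wedge\beta\wedge\overline{\tau}$, factor this through Serre duality, and thereby reduce everything to showing that cupping with $[\overline{\tau}]$ (equivalently, after conjugation, the map $\phi_{n-p}:H^{0,n-p}(\wtilde X)\to H^{n,n-p}(\wtilde X)$, $\gamma\mapsto\gamma\wedge\tau$) is an isomorphism. Up to this point your argument is correct and matches the paper almost line for line.

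The genuine gap is your final paragraph. You leave the key isomorphism as a ``plan'' invoking the $\partial\overline\partial$-lemma and an unspecified positivity computation with $\beta$, $\overline\beta$ and a K\"ahler class. It is not clear how that argument would run: knowing that $\beta\wedge\overline\tau$ is $d$-exact does not by itself produce a useful positivity statement, because $\tau$ vanishes along the exceptional divisor and you have no control over the interaction of $\beta$ with that locus. The paper avoids this difficulty entirely, and the irony is that the tool it uses is exactly the one you already invoked for your dimension count. Under the Dolbeault isomorphism, the conjugate of your map $\cup[\overline\tau]$ is the sheaf-cohomology map
\[
H^{n-p}(\psi_{\wtilde X}) : H^{n-p}\bigl(\wtilde X,\sO_{\wtilde X}\bigr)\longrightarrow H^{n-p}\bigl(\wtilde X,\omega_{\wtilde X}\bigr)
\]
induced by the sheaf morphism $\psi_{\wtilde X}:\sO_{\wtilde X}\to\omega_{\wtilde X}$, $f\mapsto f\cdot\tau$. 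Now use rational singularities ($R^q\pi_*\sO_{\wtilde X}=0$) and Grauert--Riemenschneider ($R^q\pi_*\omega_{\wtilde X}=0$) for $q>0$, together with $\pi_*\sO_{\wtilde X}\cong\sO_X$ and $\pi_*\omega_{\wtilde X}\cong\omega_X$, to identify $H^{n-p}(\psi_{\wtilde X})$ with $H^{n-p}(\psi_X)$, where $\psi_X:\sO_X\to\omega_X$ is multiplication by $\sigma$. Since $\sigma$ trivialises $\omega_X$, the latter is an isomorphism of sheaves, hence $H^{n-p}(\psi_X)$ is an isomorphism, and you are done. In short: replace your proposed $\partial\overline\partial$/positivity step by the observation that the map you want to invert is, up to conjugation and Dolbeault, induced by a genuine isomorphism of sheaves on $X$.
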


\stepcounter{thm}

\subsubsection*{Step 2 in the proof of Proposition~\ref*{prop:forms-1}: Dolbeault cohomology on $\wtilde X$}

Following standard notation, let $\mathcal A^{a,b}$ denote the sheaf of
$\bC$-valued differentiable forms of type $(a,b)$ on $\wtilde X$. Taking
products and wedge products with $\sigma$ and $\tau$, respectively, we obtain
sheaf morphisms,
$$
\begin{array}{rcccrccc}
  \psi_X : & \sO_X            & \to     & \omega_X         & \qquad \psi_{\wtilde X} : & \mathcal \sO_{\wtilde X} & \to     & \omega_{\wtilde X} \\
           & f                & \mapsto & f \cdot \sigma   &                           & f                        & \mapsto & f \cdot \tau       \\[5mm]
  \psi_0 : & \mathcal A^{0,0} & \to     & \mathcal A^{n,0} & \qquad \psi_q :           & \mathcal A^{0,q}         & \to     & \mathcal A^{n,q} \\
           & f                & \mapsto & f \cdot \tau     &                           & \alpha                   & \mapsto & \alpha \wedge \tau.  
\end{array}
$$
where $0 < q \leq n$. Observe that $\psi_X$ is isomorphic by assumption.

Since $\tau$ is holomorphic, its exterior derivative vanishes
$\overline \partial \tau = 0$. This immediately implies relations
\begin{equation}\label{eq:comm}
  \overline \partial \circ \psi_q = \psi_{q+1} \circ \overline \partial \text{\quad for all $0 \leq q \leq n$.}
\end{equation}
In particular, the sheaf morphisms $\psi_q$ induce well-defined morphisms
between Dolbeault cohomology groups,
$$
\phi_q : H^{0,q}\bigl( \wtilde X \bigr) \to H^{n,q}\bigl( \wtilde X \bigr)
\text{\quad for all $0 \leq q \leq n$.}
$$
We will later see in Step~4 of this proof that the morphisms $\phi_q$ are in
fact isomorphic.

\subsubsection*{Step 3 in the proof of Proposition~\ref*{prop:forms-1}: Dolbeault and sheaf cohomology on $\wtilde X$}

The sheaf morphisms $\psi_X$ and $\psi_{\wtilde X}$ induce additional morphisms
between sheaf cohomology groups,
\begin{align*}
H^q\bigl( \psi_X \bigr) & : H^q\bigl( X,\, \sO_X \bigr) \to H^q\bigl( X,\, \omega_X \bigr) & \text{and}\\
H^q\bigl( \psi_{\wtilde X} \bigr) & : H^q\bigl( \wtilde X,\, \sO_{\wtilde X} \bigr) \to H^q\bigl( \wtilde X,\, \omega_{\wtilde X} \bigr),
\end{align*}
for all $0 \leq q \leq n$. Again, observe that the morphisms $H^q\bigl( \psi_X
\bigr)$ are isomorphic by assumption. We will see in Step~4 of this proof that
the morphisms $H^q\bigl( \psi_{\wtilde X} \bigr)$ are isomorphic as well.

The morphisms $\phi_q$ and $H^q\bigl( \psi_{\wtilde X} \bigr)$ are closely
related. In fact, it follows from \eqref{eq:comm} that the sheaf morphisms
$\psi_\bullet$ align to give a morphism between the Dolbeault resolutions of
$\sO_{\wtilde X}$ and $\omega_{\wtilde X}$, respectively. In other words, there
exists a commutative diagram as follows,
\begin{equation}\label{eq:morDolbRes}
  \begin{split}
    \xymatrix{%
      \sO_{\wtilde X} \ar@{^(->}[r] \ar[d]_{\psi_{\widetilde X}} & \mathcal A^{0,0} \ar[r]^{\overline \partial}  \ar[d]_{\psi_0} & \mathcal A^{0,1} \ar[r]^{\overline \partial}  \ar[d]_{\psi_1} & \mathcal A^{0,2} \ar[r]^{\overline \partial} \ar[d]_{\psi_2} & \mathcal A^{0,3} \ar[r]^{\overline \partial} \ar[d]_{\psi_3} & \cdots \\
      \omega_{\wtilde X} \ar@{^(->}[r] & \mathcal A^{n,0} \ar[r]_{\overline \partial} & \mathcal A^{n,1} \ar[r]_{\overline \partial} & \mathcal A^{n,2} \ar[r]_{\overline \partial} & \mathcal A^{n,3} \ar[r]_{\overline \partial} & \cdots \\
    }
  \end{split}
\end{equation}
The following is then a standard consequence of homological algebra, see for
instance \cite[Ch.~IV, \S 6, eq.~(6.3)]{DemaillyBook}.

\begin{conclusion}\label{conc:uSQ}
  There exist commutative diagrams
  $$
  \xymatrix{%
    H^{0,q}\bigl( \wtilde X\bigr) \ar[rr]^{\phi_q} \ar[d]_{\text{Dolbeault isom.}}^{\cong} && H^{n,q}\bigl( \wtilde X\bigr) \ar[d]^{\text{Dolbeault isom.}}_{\cong} \\
    H^q\bigl( \wtilde X,\, \sO_{\wtilde X} \bigr) \ar[rr]^{H^q(\psi_{\wtilde X})} && H^q \bigl( \wtilde X,\, \omega_{\wtilde X} \bigr)
  }
  $$
  for all indices $0 \leq q \leq n$. \qed
\end{conclusion}

\subsubsection*{Step 4 in the proof of Proposition~\ref*{prop:forms-1}: cohomology on $\wtilde X$ and on  $X$}

Next, we aim to compare cohomology groups on $\wtilde X$ with those on $X$. More
precisely, we claim the following.

\begin{claim}\label{claim:lSQ}
  Given any index $0 \leq q \leq n$, there exist morphisms $\rho_{\sO}$,
  $\rho_{\omega}$ forming a commutative diagram as follows,
  \begin{equation}\label{eq:lSQ}
    \begin{split}
      \xymatrix{%
        H^q\bigl( \wtilde X,\, \sO_{\wtilde X} \bigr) \ar[rr]^{H^q(\psi_{\wtilde X})} && H^q \bigl( \wtilde X,\, \omega_{\wtilde X} \bigr) \\
        H^q\bigl( X,\, \sO_X \bigr) \ar[rr]^{H^q(\psi_X)}_{\cong} \ar[u]^{\rho_{\sO}}_{\cong} && H^q \bigl( X,\, \omega_X \bigr). \ar[u]_{\rho_{\omega}}^{\cong}
      }
    \end{split}
  \end{equation}
  In particular, the morphisms $H^q(\psi_{\wtilde X})$ are isomorphic for all $0
  \leq q \leq n$.
\end{claim}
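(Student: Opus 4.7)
The plan is to realise both $\rho_{\sO}$ and $\rho_{\omega}$ as isomorphisms coming from the Leray spectral sequence for $\pi$, and then to verify commutativity of \eqref{eq:lSQ} by reducing it to a sheaf-level identity on $X$. Two vanishing results enter: rationality of canonical singularities for $\sO_{\wtilde X}$, and Grauert-Riemenschneider vanishing for $\omega_{\wtilde X}$.

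For $\rho_{\sO}$, I would use the fact that canonical singularities are rational, so $R^i\pi_*\sO_{\wtilde X} = 0$ for all $i>0$, while normality of $X$ gives $\pi_*\sO_{\wtilde X} = \sO_X$. The Leray spectral sequence for $\sO_{\wtilde X}$ therefore collapses, and its edge maps give an isomorphism $\rho_{\sO}: H^q(X,\sO_X) \to H^q(\wtilde X, \sO_{\wtilde X})$ for every $q$, coinciding with pull-back by $\pi$.

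For $\rho_{\omega}$, Grauert-Riemenschneider vanishing yields $R^i\pi_*\omega_{\wtilde X} = 0$ for $i > 0$. To identify $\pi_*\omega_{\wtilde X}$ with $\omega_X$, I would write $\omega_{\wtilde X} = \pi^*\omega_X \otimes \sO_{\wtilde X}(E)$ with $E$ an effective $\pi$-exceptional divisor (which is possible precisely because $X$ has canonical singularities), apply the projection formula, and use that $\pi_*\sO_{\wtilde X}(E) = \sO_X$ by normality of $X$ together with effectivity and exceptionality of $E$. The Leray spectral sequence then produces an isomorphism $\rho_{\omega}: H^q(X,\omega_X) \to H^q(\wtilde X, \omega_{\wtilde X})$ for every $q$.

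For commutativity, I would argue at the sheaf level. Applying $\pi_*$ to $\psi_{\wtilde X}: \sO_{\wtilde X} \to \omega_{\wtilde X}$ and using the two identifications above yields a morphism $\sO_X \to \omega_X$ that sends a local section $f$ of $\sO_X$ to $\pi_*(\pi^*f \cdot \tau) = f \cdot \pi_*\tau = f \cdot \sigma = \psi_X(f)$. Hence $\pi_*\psi_{\wtilde X} = \psi_X$, and functoriality of the Leray spectral sequence---applied to the morphism of sheaves $\psi_{\wtilde X}$---upgrades this identity to commutativity of diagram \eqref{eq:lSQ}. Since $\psi_X$ is an isomorphism by assumption, so is $H^q(\psi_X)$, and then the commutative square forces $H^q(\psi_{\wtilde X})$ to be an isomorphism as well. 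The main obstacle I anticipate is the bookkeeping of matching the two Leray edge maps with the morphism of complexes induced by $\psi_{\bullet}$; this is standard but needs the two vanishing statements to be applied symmetrically so that the spectral sequences really do degenerate and the edge maps can be compared.
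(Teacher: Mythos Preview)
Your proposal is correct and uses the same two vanishing ingredients as the paper (rationality of canonical singularities for $\sO_{\wtilde X}$, and Grauert--Riemenschneider for $\omega_{\wtilde X}$), together with the sheaf identity $\pi_*\psi_{\wtilde X}=\psi_X$. The only difference is packaging: you invoke the Leray spectral sequence and its functoriality to produce $\rho_{\sO}$, $\rho_{\omega}$ and to deduce commutativity, whereas the paper opts for an explicit \v{C}ech argument with an affine cover of $X$, showing that the pulled-back cover $(\pi^{-1}U_i)$ is acyclic for both sheaves because of the same vanishing results. The paper in fact remarks that the spectral-sequence route works and is chosen against only for expository reasons, so your approach is exactly the alternative the authors had in mind; it is slightly slicker, while the \v{C}ech version makes the maps and the commutativity more visibly concrete.
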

\begin{proof}
  Since $\pi$ has connected fibers, and since $X$ has only canonical
  singularities, we have canonical identifications
  $$
  \pi_* \sO_{\wtilde X} \cong \sO_X \qquad \text{and} \qquad \pi_* \omega_{\wtilde X} \cong \omega_X.
  $$
  Observe that the section $\sigma$, seen as a section in $\pi_* \omega_{\wtilde
    X}$, will be identified with the differential form $\tau$. Using these
  identifications, we need to show that there exist two morphisms
  $$
  \rho_{\sO} : H^q\bigl( X,\, \sO_X \bigr) \to H^q\bigl( \wtilde X,\, \sO_{\wtilde
    X} \bigr) \quad\text{and}\quad \rho_{\omega} : H^q \bigl( X,\, \omega_X \bigr)
  \to H^q \bigl( \wtilde X,\, \omega_{\wtilde X} \bigr),
  $$
  which make Diagram~\eqref{eq:lSQ} commutative and are isomorphic. While this
  can be concluded from universal properties and spectral sequences, we found it
  more instructive to give an elementary construction using \v{C}ech cohomology.

  To this end, choose an open affine cover $(U_i)_{i \in I}$ of $X$, which will
  be acyclic for any coherent sheaf, and let $\rho_{\sO}$, $\rho_{\omega}$ be
  the compositions of the vertical arrows in the following natural diagram.
  $$
  \xymatrix{ %
    \check H^q\bigl( \wtilde X ,\, \sO_{\wtilde X} \bigr)
    \ar[rrr]^{\check H^q(\psi_{\wtilde X})} &&& 
    \check H^q\bigl( \wtilde X ,\, \omega_{\wtilde X} \bigr) \\
    \check H^q\bigl( (\pi^{-1}U_i)_{i \in I} ,\, \sO_{\wtilde X} \bigr) 
    \ar[rrr]^{\check H^q((\pi^{-1}U_i)_{i \in I} ,\,\psi_{\wtilde X})} \ar[u]^{r_{\wtilde X, \sO}}  \ar@{<->}[d]_{\cong} &&& 
    \check H^q\bigl( (\pi^{-1}U_i)_{i \in I} ,\, \omega_{\wtilde X} \bigr)
    \ar[u]_{r_{\wtilde X, \omega}} \ar@{<->}[d]^{\cong} \\
    \check H^q\bigl( (U_i)_{i \in I} ,\, \pi_*\sO_{\wtilde X} \bigr) 
    \ar[rrr]^{\check H^q((U_i)_{i \in I} ,\, \pi_* \psi_{\wtilde X})} \ar@{<->}[d]_{\cong} &&&
    \check H^q\bigl( (U_i)_{i \in I} ,\, \pi_* \omega_{\wtilde X} \bigr) 
    \ar@{<->}[d]^{\cong} \\
    \check H^q\bigl( (U_i)_{i \in I} ,\, \sO_X \bigr) 
    \ar[rrr]^{\check H^q((U_i)_{i \in I} ,\,\psi_X)} \ar[d]_{r_{X, \sO}}^{\cong} &&&
    \check H^q\bigl( (U_i)_{i \in I} ,\, \omega_X \bigr) 
    \ar[d]^{r_{X, \omega}}_{\cong} \\
    \check H^q\bigl( X ,\, \sO_X \bigr)
    \ar[rrr]^{\check H^q(\psi_X)}_{\cong} \ar@/^0.7cm/@<1.2cm>[uuuu]^{\check \rho_{\sO}} &&&
    \check H^q\bigl( X ,\, \omega_X \bigr) 
    \ar@/_0.7cm/@<-1.2cm>[uuuu]_{\check \rho_{\omega}}
  }
  $$
  Here, the morphisms $r_{\bullet, \bullet}$ are the standard refinement
  morphisms that map \v{C}ech cohomology groups defined with respect to a
  specific open covering into \v{C}ech cohomology. Identifying \v{C}ech and
  sheaf cohomology, commutativity of Diagram~\eqref{eq:lSQ} is then immediate.
  
  To prove that $\rho_{\sO}$ and $\rho_{\omega}$ are isomorphic, it suffices to
  show that the refinement morphisms, $r_{\wtilde X, \sO}$ and $r_{\wtilde X,
    \omega}$ are isomorphic. We do that by showing that the covering $(\pi^{-1}
  U_i)_{i \in I}$ is acyclic for both $\sO_{\wtilde X}$ and $\omega_{\wtilde
    X}$. That, however, follows immediately from the following two well-known
  vanishing results which hold for all indices $q > 0$,
  \begin{align*}
  R^q \pi_* \sO_{\wtilde X} & = 0 && \text{because $X$ has rational singularities, \cite[Thm.~5.22]{KM98}} \\
  R^q \pi_* \omega_{\wtilde X} & = 0 && \text{by Grauert-Riemenschneider vanishing, \cite[Cor.~2.68]{KM98}.}
  \end{align*}
  The finishes the proof of Claim~\ref{claim:lSQ}.
\end{proof}

Combining Conclusion~\ref{conc:uSQ} and Claim~\ref{claim:lSQ}, we arrive at the
following statement, which summarises the results obtained so far.

\begin{conclusion}\label{conncl:67}
  The morphisms $\phi_q : H^{0,q}\bigl( \wtilde X \bigr) \to H^{n,q}\bigl(
  \wtilde X \bigr)$ are isomorphic for all $0 \leq q \leq n$. \qed
\end{conclusion}

\subsubsection*{Step 5 in the proof of Proposition~\ref*{prop:forms-1}: Serre duality} 

Given any index $0 \leq p \leq n$, consider the complex bilinear form $\rho$
obtained as the composition of the following morphisms,
\begin{align*}
  H^0 \bigl(\wtilde X,\, \Omega^p_{\wtilde X} \bigr) \times H^0 \bigl(\wtilde X,\, \Omega^{n-p}_{\wtilde X} \bigr) 
  & \xrightarrow{\makebox[2.5cm]{\scriptsize Dolbeault isom.}}
  H^{p,0}(\wtilde X) \times H^{n-p,0}(\wtilde X)\\
  & \xrightarrow{\makebox[2.5cm]{\scriptsize $\Id \times$ conjugation}} 
  H^{p,0}(\wtilde X) \times H^{0,n-p}(\wtilde X)\\
  & \xrightarrow{\makebox[2.5cm]{\scriptsize $\Id \times \phi_{n-p}$}} 
  H^{p,0}(\wtilde X) \times H^{n,n-p}(\wtilde X)\\
  & \xrightarrow{\makebox[2.5cm]{\scriptsize $\Id \times$ conjugation}} 
  H^{p,0}(\wtilde X) \times H^{n-p,n}(\wtilde X)\\
  & \xrightarrow{\makebox[2.5cm]{\scriptsize $s$}} 
  \C,
\end{align*}
where $s$ is the perfect pairing given by Serre duality, cf.~\cite[Ch.~VI,
Thm.~7.3]{DemaillyBook}. 

Recall from Conclusion~\ref{conncl:67} that with the exception of $s$ all maps used in the definition of
$\rho$ are isomorphisms. It follows that $\rho$ is a
perfect pairing. Unwinding the definition, $\rho$ is given in elementary terms
as follows,
$$
\rho : H^0 \bigl(\wtilde X,\, \Omega^p_{\wtilde X} \bigr) \times H^0 \bigl(\wtilde X,\,
\Omega^{n-p}_{\wtilde X} \bigr) \to \C, \qquad \bigl( \alpha, \beta \bigr)
\mapsto \int_X \alpha \wedge \beta \wedge \overline \tau.
$$

\subsubsection*{Step 6 in the proof of Proposition~\ref*{prop:forms-1}: End of proof} 

We are now ready to prove Claim~\ref{claim:prop61}. Assume we are given a
non-zero form $\alpha \in H^0\bigl(\wtilde X, \, \Omega_{\wtilde X}^p
\bigr)$. Using that $\rho$ is a perfect pairing, we can therefore find a form
$\beta \in H^0\bigl(\wtilde X, \, \Omega_{\wtilde X}^{n-p} \bigr)$ such that
\begin{equation}\label{eq:intFRM}
  \rho(\alpha, \beta) = \int_X \alpha \wedge \beta \wedge \overline \tau  = 1
\end{equation}
Equation~\eqref{eq:intFRM} implies that $\alpha \wedge \beta$ is a non-vanishing
element of $H^0 \bigl( \wtilde X,\, \omega_{\wtilde X} \bigr)$. Since $H^0
\bigl( \wtilde X,\, \omega_{\wtilde X} \bigr)$ is one-dimensional, there exists
a scalar $\lambda \in \bC^*$ such that 
$$
\tau = \lambda \cdot (\alpha \wedge \beta) = \alpha \wedge (\lambda \cdot \beta).
$$
This finishes the proof of Claim~\ref{claim:prop61} and hence of
Proposition~\ref{prop:forms-1}. \qed

\subsection{Hodge duality for klt spaces}

If $X$ is a projective manifold, Hodge theory gives a complex-linear isomorphism
between the spaces $H^0\bigl(X,\, \Omega^{p}_X\bigr)$ and $\overline{
  H^p\bigl(X,\, \sO_X\bigr) }$. We show that the same statement holds for
reflexive differentials if $X$ has canonical singularities, or more generally if
$X$ is the base space of a klt pair.

\begin{prop}[Hodge duality for klt spaces]\label{prop:forms-2}
  Let $X$ be a normal $n$-dimensional projective variety $X$. Suppose that there
  exists an effective $\bQ$-divisor $D$ on $X$ such that $(X,D)$ is klt. Given
  any number $0 \leq p \leq n$, there are complex-linear isomorphisms
  $$
  H^0\bigl(X,\, \Omega^{[p]}_X\bigr) \cong H^0 \left(X_{\reg},
  \,\Omega^p_{X_{\reg}} \right) \cong \overline{ H^p\bigl(X,\, \sO_X\bigr) }.
  $$
\end{prop}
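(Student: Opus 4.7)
The plan is to pass to a resolution of singularities and chain together three standard ingredients that are already available in the paper.

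The first isomorphism $H^0\bigl(X,\,\Omega^{[p]}_X\bigr)\cong H^0\bigl(X_{\reg},\,\Omega^p_{X_{\reg}}\bigr)$ is immediate from Definition~\ref{def:reflDiff}, which identifies $\Omega^{[p]}_X=\imath_*\Omega^p_{X_{\reg}}$ and hence equates global sections. So the whole problem reduces to constructing the second, $\bC$-linear isomorphism to $\overline{H^p(X,\sO_X)}$.

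For that, let $\pi:\wtilde X\to X$ be any resolution of singularities. First, I would apply the Extension Theorem~\ref{thm:ext}, whose klt hypothesis is provided by assumption: it yields $\pi_*\Omega^p_{\wtilde X}=\Omega^{[p]}_X$, and therefore
$$
H^0\bigl(X,\,\Omega^{[p]}_X\bigr)\;\cong\;H^0\bigl(\wtilde X,\,\Omega^p_{\wtilde X}\bigr).
$$
Next, since $\wtilde X$ is smooth and projective, classical Hodge theory (the Hodge decomposition together with Dolbeault's theorem) supplies a canonical complex-linear identification
$$
H^0\bigl(\wtilde X,\,\Omega^p_{\wtilde X}\bigr)\;=\;H^{p,0}(\wtilde X)\;\cong\;\overline{H^{0,p}(\wtilde X)}\;\cong\;\overline{H^p\bigl(\wtilde X,\,\sO_{\wtilde X}\bigr)}.
$$

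Finally, I would compare $H^p(\wtilde X,\sO_{\wtilde X})$ with $H^p(X,\sO_X)$ on the base. Klt singularities are rational (as already recalled in the proof of Proposition~\ref{prop:forms-1}, via \cite[Thm.~5.22]{KM98}), so $\pi_*\sO_{\wtilde X}=\sO_X$ and $R^q\pi_*\sO_{\wtilde X}=0$ for all $q>0$. The Leray spectral sequence therefore degenerates at $E_2$ and gives $H^p\bigl(\wtilde X,\,\sO_{\wtilde X}\bigr)\cong H^p\bigl(X,\,\sO_X\bigr)$. Taking complex conjugates and composing the three isomorphisms delivers the claim. There is essentially no obstacle here: every non-trivial input---the Extension Theorem, Hodge theory on a smooth compactification, and the rationality of klt singularities---has already appeared in the paper, and the argument amounts to stringing them together; the only mild care needed is to note that the conjugate-linear step happens entirely on the smooth model $\wtilde X$, where ordinary Hodge theory applies without modification.
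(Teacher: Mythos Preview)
Your proof is correct and follows essentially the same route as the paper: resolve, apply the Extension Theorem~\ref{thm:ext} to identify reflexive forms on $X$ with forms on $\wtilde X$, use Hodge theory/Dolbeault on the smooth model $\wtilde X$ to pass to $\overline{H^p(\wtilde X,\sO_{\wtilde X})}$, and then invoke rationality of klt singularities to descend to $\overline{H^p(X,\sO_X)}$. The only cosmetic difference is that you spell out the Leray spectral sequence explicitly where the paper simply cites \cite[Thm.~5.22]{KM98}.
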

\begin{proof}
  Fix a resolution of singularities $\pi: \wtilde X \to X$ of $X$. Then we have
  the following chain of complex-linear isomorphisms
  \begin{align*}
    H^0\bigl(X, \,\Omega_X^{[p]}\bigr) & \cong H^0\bigl(\wtilde X,\, \Omega^p_{\wtilde X}\bigr)  && \text{Since $\pi_*\Omega^p_{\wtilde X} = \Omega^{[p]}_X$ by the Extension Theorem~\ref{thm:ext}}\\
    &\cong  H^{p,0}\bigl(\wtilde X\bigr) && \text{Dolbeault isomorphism}\\
    &\cong \overline{H^{0,p}\bigl(\wtilde X\bigr)} &&  \text{Conjugation}\\
    & \cong \overline{H^p\bigl(\wtilde X,\, \sO_{\wtilde X}\bigr)} && \text{Dolbeault isomorphism}\\
    & \cong \overline{H^p\bigl(X,\,\sO_X\bigr)} && \text{$X$ has rational singularities, \cite[Thm.~5.22]{KM98}.}\qedhere
  \end{align*}
\end{proof}

We list a few immediate consequences of the results obtained so far.

\begin{cor}\label{cor:64}
  Let $X$ be a normal $n$-dimensional projective variety $X$ having at worst
  canonical singularities.  Suppose that the canonical sheaf of $X$ is trivial,
  $\omega_X \cong \sO_X$. Then the following holds.
  \begin{enumerate}
  \item Non-zero forms $\eta \in H^0 \bigl(X_{\reg},\,
    \Omega^q_{X_{\reg}}\bigr)$ do not have any zeroes.
  \item\label{il:trout} For all $0 \leq p \leq n$, we have complex-linear
    isomorphisms $H^0\bigl(X,\, \Omega_X^{[p]}\bigr) \cong H^0\bigl(X, \,
    \Omega^{[n-p]}_X \bigr)^*$, canonically given up to multiplication with a
    constant.
  \item If the dimension of $X$ is odd, then $\chi\bigl(X,\,\sO_X\bigr) =
    0$. \qed
  \end{enumerate}
\end{cor}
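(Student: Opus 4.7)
All three statements will be short consequences of the wedge-product non-degeneracy Proposition~\ref{prop:forms-1} and the Hodge-duality Proposition~\ref{prop:forms-2}.

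For assertion~(1), the plan is a one-line contradiction: given a non-zero $\eta \in H^0\bigl(X_{\reg},\, \Omega^q_{X_{\reg}}\bigr)$, Proposition~\ref{prop:forms-1} supplies a complementary form $\phi \in H^0\bigl(X_{\reg},\, \Omega^{n-q}_{X_{\reg}}\bigr)$ such that $\eta \wedge \phi$ extends to a \emph{non-zero} section of $\omega_X \cong \sO_X$. Since the right-hand side is an invertible sheaf, any non-zero global section is nowhere vanishing; in particular $(\eta \wedge \phi)(x) \neq 0$ for every $x \in X_{\reg}$, which forces $\eta(x) \neq 0$.

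For assertion~(2), I would simply repackage Proposition~\ref{prop:forms-1}. Fixing a trivialisation $H^0\bigl(X,\, \omega_X\bigr) \cong \bC$ (unique up to a non-zero scalar), the wedge pairing becomes a non-degenerate $\bC$-bilinear form on the finite-dimensional vector spaces $H^0\bigl(X,\, \Omega^{[p]}_X\bigr)$ and $H^0\bigl(X,\, \Omega^{[n-p]}_X\bigr)$, which by linear algebra induces an isomorphism $H^0\bigl(X,\, \Omega^{[p]}_X\bigr) \cong H^0\bigl(X,\, \Omega^{[n-p]}_X\bigr)^*$. Rescaling the trivialisation rescales the isomorphism by the same scalar, which is the claimed canonicity up to a constant.

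For assertion~(3), I would combine~(2) with Proposition~\ref{prop:forms-2} to obtain
\[
  \overline{H^p\bigl(X,\, \sO_X\bigr)} \,\cong\, H^0\bigl(X,\, \Omega^{[p]}_X\bigr) \,\cong\, H^0\bigl(X,\, \Omega^{[n-p]}_X\bigr)^* \,\cong\, \overline{H^{n-p}\bigl(X,\, \sO_X\bigr)}^*,
\]
so that $h^p(X,\, \sO_X) = h^{n-p}(X,\, \sO_X)$ for all $0 \leq p \leq n$. (Alternatively, this equality follows from Serre duality applied to the Cohen--Macaulay variety $X$ with $\omega_X \cong \sO_X$.) When $n$ is odd, the indices $0, 1, \ldots, n$ pair up with opposite signs in the alternating sum defining $\chi\bigl(X,\, \sO_X\bigr)$, and all contributions cancel. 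I do not anticipate a genuine obstacle in any of the three steps: the content of the corollary is entirely contained in the two preceding propositions, and the proof is really a matter of extracting the right linear-algebraic and Serre-dual consequence from each.
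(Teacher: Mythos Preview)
Your proof is correct and matches the paper's intended approach: the corollary is stated with a \qed\ and introduced as ``a few immediate consequences of the results obtained so far'', namely Propositions~\ref{prop:forms-1} and~\ref{prop:forms-2}, which is exactly what you unpack. Your alternative remark that assertion~(3) also follows from Serre duality on the Cohen--Macaulay variety $X$ with $\omega_X \cong \sO_X$ is a valid shortcut that the paper does not mention.
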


\begin{cor}[Existence of forms on canonical varieties with  $K_X \equiv 0$]\label{cor:max1}
  Let $X$ be a normal $n$-dimensional projective variety $X$ having at worst
  canonical singularities. Assume that $\wtilde q(X) = 0$ and that the canonical
  divisor $K_X$ is numerically trivial.  Then
  $$
  h^0 \bigl(X,\, \Omega^{[1]}_X\bigr) = h^0 \bigl(X,\, \Omega^{[n-1]}_X\bigr) =
  0.
  $$
\end{cor}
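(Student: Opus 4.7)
The plan is to reduce to the case where $\omega_X \cong \sO_X$ by taking an index-one cover, then combine the Hodge-theoretic identification of Proposition~\ref{prop:forms-2} with the wedge-product duality of Corollary~\ref{cor:64}.\iref{il:trout}.

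First, Proposition~\ref{Kaw} tells us that $K_X$ is torsion, so there exists an index-one cover $\alpha : X^{(1)} \to X$ (cf.~\cite[Sect.~5.2]{KM98}), which is étale in codimension one, with $\omega_{X^{(1)}} \cong \sO_{X^{(1)}}$ and at worst canonical singularities. I would next observe that the augmented irregularity is monotone along covers étale in codimension one: any finite cover $Y \to X^{(1)}$ étale in codimension one composes with $\alpha$ to give a finite cover $Y \to X$ which is itself étale in codimension one, so $q(Y) \leq \wtilde q(X) = 0$. Thus $\wtilde q\bigl( X^{(1)} \bigr) = 0$, and in particular $q\bigl( X^{(1)} \bigr) = 0$.

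Now apply Hodge duality for klt spaces, Proposition~\ref{prop:forms-2}, to $X^{(1)}$ with $p=1$:
$$
H^0\bigl( X^{(1)},\, \Omega^{[1]}_{X^{(1)}} \bigr) \cong \overline{ H^1\bigl( X^{(1)},\, \sO_{X^{(1)}} \bigr) } = 0.
$$
Since $\omega_{X^{(1)}} \cong \sO_{X^{(1)}}$, Corollary~\ref{cor:64}.\iref{il:trout} then gives a canonical isomorphism
$$
H^0\bigl( X^{(1)},\, \Omega^{[n-1]}_{X^{(1)}} \bigr) \cong H^0\bigl( X^{(1)},\, \Omega^{[1]}_{X^{(1)}} \bigr)^* = 0.
$$

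To transfer these vanishings back to $X$, I would argue that the reflexive pull-back along a finite cover étale in codimension one induces an injection on spaces of reflexive differential forms. More precisely, let $U \subset X_{\reg}$ be the open subset over which $\alpha$ is étale; by construction $X \setminus U$ and $X^{(1)} \setminus \alpha^{-1}(U)$ both have codimension at least two. Restriction to $U$, standard pull-back of Kähler differentials via the étale morphism $\alpha^{-1}(U) \to U$, and extension across a codimension-two set using Definition~\ref{def:reflDiff} yields an injective linear map $H^0\bigl( X,\, \Omega^{[p]}_X \bigr) \hookrightarrow H^0\bigl( X^{(1)},\, \Omega^{[p]}_{X^{(1)}} \bigr)$ for every $p$. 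Applied with $p=1$ and $p=n-1$, this proves the claim.

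The only step that needs any genuine care is the compatibility of the augmented irregularity with the index-one cover and the clean statement of injectivity of reflexive pull-back; everything else is an immediate consequence of the structural results already established in Sections~\ref{sec:known} and~\ref{sec:kawamata}.
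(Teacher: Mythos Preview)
Your proof is correct and follows essentially the same route as the paper: pass to the index-one cover $\wtilde X \to X$, use $\wtilde q(X)=0$ to get $q(\wtilde X)=0$, apply Proposition~\ref{prop:forms-2} and Corollary~\ref{cor:64}.\iref{il:trout} to obtain vanishing of reflexive $1$-- and $(n-1)$--forms on $\wtilde X$, and then descend. The only cosmetic difference is that the paper frames the last step as a contradiction (assume a nonzero form on $X$, pull it back), whereas you phrase it via injectivity of reflexive pull-back; these are equivalent.
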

\begin{proof}
  We show that $H^0 \bigl(X,\, \Omega^{[n-1]}_X\bigr) = 0$. Assume to the
  contrary, and let $\sigma$ be a non-zero reflexive $(n-1)$-form on $X$.
  Recalling from Kawamata's analysis of the Albanese map, Proposition~\ref{Kaw},
  that $K_X$ is torsion, let $f : \wtilde X \to X$ be the associated index-one
  cover. The morphism $f$ is finite and étale in codimension one, the space
  $\wtilde X$ has canonical singularities, and trivial canonical sheaf
  $\omega_{\wtilde X} \cong \sO_{\wtilde X}$, cf.~\cite[5.19 and 5.20]{KM98}. In
  particular, the reflexive form $\sigma$ pulls back to a give non-vanishing
  reflexive $(n-1)$-form $\wtilde \sigma$ on $\wtilde X$. Furthermore, observe
  that the covering space $\wtilde X$ satisfies all requirements made in
  Proposition~\ref{prop:forms-2} and Corollary~\ref{cor:64}. This shows
  $$
  0 =\wtilde q(X) \geq q(\wtilde X) = h^1 \bigl( \wtilde X,\, \sO_{\wtilde X} \bigr) \underset{\text{Prop.~\ref{prop:forms-2}}}{=}
  h^0 \bigl( \wtilde X,\, \Omega^{[1]}_{\wtilde X} \bigr) \underset{\text{Cor.~(\ref{cor:64}.\ref{il:trout})}}{=} h^0 \bigl( \wtilde
  X,\, \Omega^{[n-1]}_{\wtilde X} \bigr),
  $$
  contradicting the existence of $\wtilde \sigma$. The same argument also shows
  $H^0 \bigl(X,\, \Omega^{[1]}_X\bigr) = 0$, finishing the proof of
  Corollary~\ref{cor:max1}.
\end{proof}

\subsection{Existence of complementary sheaves}
\label{ssec:compSheaf}

We conclude the present Section~\ref{sec:kawamata} with a final corollary which
generalises \cite[Lem.~5.11]{Pe94}; see also \cite[p.~581]{Bogomolov74}. It
shows that saturated subsheaves of $\sT_X$ with trivial determinant often have a
complementary subsheaf which presents $\sT_X$ as a direct
product. Corollary~\ref{cor:split} is thus an important ingredient in the proof
of our main result, the Decomposition Theorem~\ref{decoII}.

\begin{cor}[Existence of complementary subsheaves in $\sT_X$]\label{cor:split}
  Let $X$ be a normal projective variety with trivial canonical sheaf $\omega_X
  \cong \sO_X$, having at worst canonical singularities. Let $\sE \subsetneq
  \sT_X$ be a saturated subsheaf with trivial determinant, $\det \sE \cong
  \sO_X$. Then there exists a subsheaf $\sF \subsetneq \sT_X$ with trivial
  determinant such that
  $$
  \sT_X \cong\sE \oplus \sF.
  $$
\end{cor}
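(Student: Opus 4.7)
The plan is to use the non-degeneracy result of Proposition~\ref{prop:forms-1} to construct an explicit projection morphism $\pi \colon \sT_X \to \sE$ that restricts to the identity on $\sE$; taking $\sF$ to be (the reflexive hull of) $\ker\pi$ will then yield the desired splitting.

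Set $p := \rank \sE$ and fix trivializing sections $\omega \in H^0(X, \omega_X)$ and $s_\sE \in H^0(X, \det \sE)$. The inclusion $\det \sE \hookrightarrow \wedge^p \sT_X$ together with contraction against $\omega$ sends $s_\sE$ to a non-zero reflexive $(n-p)$-form
$$
\phi := \iota_{s_\sE}\omega \in H^0\bigl(X,\,\Omega^{[n-p]}_X\bigr),
$$
which, on the open locus where $\sE$ is a subbundle of $\sT_X$, generates the invertible subsheaf $\det(\sT_X/\sE)^* \subset \Omega^{[n-p]}_X$. Applying Proposition~\ref{prop:forms-1} to $\phi$ yields a reflexive $p$-form $\eta \in H^0(X, \Omega^{[p]}_X)$ with $\phi \wedge \eta$ a non-vanishing section of $\omega_X$; after rescaling we may assume $\phi \wedge \eta = \omega$.

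From $\eta$ the projection is assembled as follows. The form $\eta$ defines a morphism $\wedge^p \sT_X \to \sO_X$, and composing with the wedge pairing $\sT_X \otimes \wedge^{p-1}\sE \to \wedge^p \sT_X$ yields a morphism $\sT_X \to (\wedge^{p-1}\sE)^*$. Combined with the canonical isomorphism $(\wedge^{p-1}\sE)^* \cong \sE \otimes (\det \sE)^{-1}$ of reflexive sheaves and the trivialization $s_\sE$ of $\det \sE$, this produces $\pi \colon \sT_X \to \sE$. In a local frame $v_1,\ldots,v_p$ of $\sE$ (on the open set where $\sE$ is a subbundle) with $v_1 \wedge \cdots \wedge v_p = s_\sE$, the map reads
$$
\pi(w) = \sum_{i=1}^{p} (-1)^{i-1}\,\eta(w, v_1, \ldots, \widehat{v_i}, \ldots, v_p)\, v_i,
$$
and a direct combinatorial computation yields $\pi(v_k) = \eta(v_1, \ldots, v_p)\cdot v_k$. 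The normalization $\phi \wedge \eta = \omega$ combined with $\phi = \iota_{s_\sE}\omega$ forces $\eta(v_1,\ldots,v_p)$ to equal a fixed non-zero constant, and absorbing this constant into the definition of $\pi$ gives $\pi|_\sE = \Id_\sE$.

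Setting $\sF$ to be the reflexive hull of $\ker \pi$ then gives the splitting $\sT_X = \sE \oplus \sF$ on the open subset where $\sE$ is a subbundle; since this subset has complement of codimension at least two and all sheaves involved are reflexive, the splitting extends uniquely to all of $X$. Comparing determinants and using $\det \sT_X \cong \omega_X^{-1} \cong \sO_X$ together with $\det \sE \cong \sO_X$ yields $\det \sF \cong \sO_X$. The main obstacle in this plan is to verify that $\pi$, built from the abstract form $\eta$ supplied by Proposition~\ref{prop:forms-1}, actually restricts to a non-zero scalar multiple of the identity on $\sE$; the crux is the combinatorial identity linking the explicit local formula for $\pi$ to the normalization $\phi \wedge \eta = \omega$.
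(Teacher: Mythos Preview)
Your approach is correct and is essentially the same as the paper's: both use Proposition~\ref{prop:forms-1} to produce a complementary reflexive form, build an explicit splitting map from it via linear-algebra contractions, check the splitting property by a local computation in a well-chosen frame, and then extend over the small bad locus by reflexivity. The only difference is organisational: you construct a \emph{retraction} $\pi:\sT_X\to\sE$ of the inclusion, whereas the paper constructs a \emph{section} $\phi:\sQ\to\sT_X$ of the quotient map $\sT_X\to\sQ$. These are dual versions of the same idea, and your version is arguably slightly cleaner since it avoids introducing the quotient.

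Regarding the point you flag as the ``main obstacle'': the verification goes through. In your normalised frame with $v_1\wedge\cdots\wedge v_p=s_{\sE}$, the quantity $\eta(v_1,\ldots,v_p)$ is nothing but the global pairing $\langle\eta,s_{\sE}\rangle\in H^0(X,\sO_X)=\bC$, hence constant; and the pointwise identity
\[
(\iota_{s_{\sE}}\omega)\wedge\eta \;=\; (-1)^{p(n-p)}\,\langle\eta,s_{\sE}\rangle\cdot\omega
\]
(which one checks in any local frame extending $v_1,\ldots,v_p$) together with your normalisation $\phi\wedge\eta=\omega$ gives $\langle\eta,s_{\sE}\rangle=(-1)^{p(n-p)}\ne 0$. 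So $\pi|_{\sE}$ is indeed a non-zero scalar multiple of the identity, and the argument is complete.
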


We will prove Corollary~\ref{cor:split} in the remainder of the present
Section~\ref{ssec:compSheaf}. For convenience, the proof is subdivided into four
steps.

\subsubsection*{Step 1 in the proof of Corollary~\ref*{cor:split}: Setup}

We consider the obvious quotient sequence
\begin{equation}\label{eq:quot}
  0 \longrightarrow \sE \xrightarrow{\quad\alpha\quad} \sT_X 
  \xrightarrow{\quad\beta\quad} \underbrace{\sT_X/\sE}_{=:\sQ} \longrightarrow 0.
\end{equation}
Since $\sE$ is saturated in the reflexive sheaf $\sT_X$, it is itself
reflexive. Further, the associated quotient $\sQ$ is a torsion free sheaf, say
of rank $r > 0$. We aim to split sequence~\eqref{eq:quot} in codimension one. To
be more precise, let $Z \subsetneq X$ be the smallest set such that $X^\circ :=
X \setminus Z$ is smooth and $\sQ|_{X^\circ}$ is locally free. Since $X$ is
normal, and since torsion-free sheaves on manifolds are locally free in
codimension one, \cite[p.~148]{OSS}, it follows that $Z$ is small, that is,
$\codim_X Z \geq 2$. If we can find a splitting of Sequence~\eqref{eq:quot} on
$X^\circ$ and write $\sT_{X^\circ} \cong \sE|_{X^\circ} \oplus \sQ|_{X^\circ}$,
it will follow from reflexivity that $\sT_X \cong \sE \oplus \sQ^{**}$, and the
proof of Corollary~\ref{cor:split} will be finished.

\subsubsection*{Step 2 in the proof of Corollary~\ref*{cor:split}: Construction of the splitting}

In order to construct the splitting, recall the assumptions that $\det \sE \cong
\sO_X$ and $\omega_X \cong \sO_X$. As a consequence, we have triviality of
determinants, $\det \sQ \cong \det \sQ^* \cong \sO_X$, see~\cite[Ch.~V,
Prop.~6.9]{Kob87} for details. Let $\eta_{\sQ} \in H^0 \bigl(X,\, \det
\sQ^*\bigr)$ be any non-vanishing section.

Taking duals on $X^\circ$, Sequence~\eqref{eq:quot} gives injections
$$
\beta^* : \sQ^*|_{X^\circ} \to \Omega^1_{X^\circ}, \quad \wedge^r\beta^* :
\wedge^r \sQ^*|_{X^\circ} \to \Omega^r_{X^\circ} \quad\text{and}\quad \det
\beta^* : \det \sQ^* \to \Omega^{[r]}_X.
$$
We obtain a non-trivial reflexive form
$$
\eta := \bigl( \det \beta^* \bigr) (\eta_{\sQ}) \in H^0\bigl(X,\,
\Omega_X^{[r]}\bigr) \setminus \{ 0\}.
$$
Denoting the dimension of $X$ by $n$, Proposition~\ref{prop:forms-1} asserts the
existence of a complementary reflexive form $\mu \in H^0\bigl(X,
\,\Omega^{[n-r]}_X\bigr)$ such that $\eta \wedge \mu$ gives a nowhere-vanishing
section of $\omega_X$. The triviality of $\det \sQ|_{X^\circ}$ and of
$\omega_{X^\circ} = \det \sT_{X^\circ}^*$ thus gives isomorphisms of sheaves,
\begin{equation}\label{eq:7so}
  \begin{array}{rccccrccc}
    \delta_{\sQ} : & \sQ|_{X^\circ} & \to & \wedge^{r-1} \sQ^*|_{X^\circ} \\[1mm]
    & q & \mapsto & \eta_{\sQ}(q, \cdot ) \\[4mm]
    \delta_{\sT_X} : & \sT_{X^\circ} & \to & \Omega^{n-1}_{X^\circ} \\[1mm]
    & \vec v & \mapsto & \bigl(\eta\wedge\mu \bigr)(\vec v, \cdot ).
  \end{array}
\end{equation}

\begin{rem}
  If $r = 1$, then $\wedge^{r-1} \sQ^*|_{X^\circ} = \sO_{X^\circ}$ is simply the
  sheaf of functions.
\end{rem}

Using the isomorphisms \eqref{eq:7so} and the complementary form $\mu$, we can
now define a sheaf morphism $\phi: \sQ|_{X^\circ} \to \sT_{X^\circ}$ as
the composite of the following natural maps
\begin{equation}\label{eq:phi}
  \sQ|_{X^\circ} \xrightarrow{\;\;\delta_{\sQ}\;\;} \wedge^{r-1}
  \sQ^*|_{X^\circ} \xrightarrow{\;\;\wedge^{r-1}\beta^*\;\;}
  \Omega^{r-1}_{X^\circ} \xrightarrow{\;\; \wedge \mu \;\;} \Omega^{n-1}_{X^\circ}
  \xrightarrow{\;\;\delta_{\sT_X}^{-1}\;\;} \sT_{X^\circ}.
\end{equation}

\begin{rem}
  In case where $r = 1$, the sheaves $\wedge^{r-1} \sQ^*|_{X^\circ}$ and
  $\Omega^{r-1}_{X^\circ}$ both equal the trivial sheaf $\sO_{X^\circ}$. The
  morphism $\wedge^{r-1}\beta^*$ is then the identity map.
\end{rem}

To end the proof of Corollary~\ref{cor:split}, it will now suffice to prove the
following claim.

\begin{claim}\label{claim:XXL}
  The morphism $\phi: \sQ|_{X^\circ} \to \sT_{X^\circ}$ defines a splitting of
  Sequence~\eqref{eq:quot} over the open set $X^\circ$.
\end{claim}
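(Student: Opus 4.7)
The question whether $\beta \circ \phi = \mathrm{id}_{\sQ|_{X^\circ}}$ is local on $X^\circ$, so my plan is to verify it after restricting to a small open set $U \subseteq X^\circ$ on which the short exact sequence~\eqref{eq:quot} splits as $\sT_U = \sE|_U \oplus \sC$. The starting observation is that, because $\eta = (\det \beta^*)(\eta_{\sQ})$ by construction, for every local section $\vec v$ of $\sT_X$ we have
\begin{equation*}
  \iota_{\vec v}\,\eta \;=\; (\wedge^{r-1}\beta^*)\bigl(\iota_{\beta(\vec v)}\,\eta_{\sQ}\bigr) \;=\; (\wedge^{r-1}\beta^*)\bigl(\delta_{\sQ}(\beta(\vec v))\bigr),
\end{equation*}
which in particular vanishes as soon as $\vec v \in \sE$. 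Consequently, for any local lift $\tilde q$ of $q$ to $\sT_{X^\circ}$, one has the cleaner description $\phi(q) = \delta_{\sT_X}^{-1}\bigl(\iota_{\tilde q}\eta \wedge \mu\bigr)$.

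The next step is to apply the graded Leibniz rule $\iota_{\tilde q}(\eta \wedge \mu) = \iota_{\tilde q}\eta \wedge \mu + (-1)^r \eta \wedge \iota_{\tilde q}\mu$ together with $\delta_{\sT_X}(\tilde q) = \iota_{\tilde q}(\eta \wedge \mu)$; this rearranges to
\begin{equation*}
  \phi(q) \;=\; \tilde q \,-\, (-1)^r\, \delta_{\sT_X}^{-1}\bigl(\eta \wedge \iota_{\tilde q}\mu\bigr).
\end{equation*}
It therefore suffices to show that the correction term lies in $\sE|_U$, or equivalently that $\eta \wedge \iota_{\tilde q}\mu$ belongs to $\delta_{\sT_X}(\sE|_U)$. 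Applying the same Leibniz identity once more, together with the vanishing $\iota_{\vec u}\eta = 0$ for $\vec u \in \sE$, one sees that $\delta_{\sT_X}(\sE|_U) = \bigl\{(-1)^r \eta \wedge \iota_{\vec u}\mu : \vec u \in \sE|_U \bigr\}$. The problem thus reduces to producing a local lift $\vec v$ of $q$ satisfying $\eta \wedge \iota_{\vec v}\mu = 0$.

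To construct such a lift I would choose local frames $e_1, \ldots, e_s$ of $\sE|_U$ and $c_1, \ldots, c_r$ of $\sC$ with dual frame $e_k^{*},\, c_i^{*}$, and write $\eta = f \cdot c_1^{*} \wedge \cdots \wedge c_r^{*}$ for some local function $f$. The form $\mu$ admits a decomposition $\mu = \mu_0 + \mu_{\geq 1}$ according to the number of $c^{*}$-factors, with $\mu_0 = g \cdot e_1^{*} \wedge \cdots \wedge e_s^{*}$. Because $\eta \wedge c_i^{*} = 0$ for every $i$, only $\mu_0$ contributes to $\eta \wedge \mu$, so the non-vanishing of this top form, guaranteed by Proposition~\ref{prop:forms-1}, forces $g$ to be nowhere zero on $U$. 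A direct expansion of $\eta \wedge \iota_{\vec v}\mu$ in the basis $c_1^{*} \wedge \cdots \wedge c_r^{*} \wedge e_1^{*} \wedge \cdots \widehat{e_k^{*}} \cdots \wedge e_s^{*}$ (using $\iota_{\vec v}(\eta \wedge \mu_{\geq 1}) = 0$, which follows from $\eta \wedge \mu_{\geq 1} = 0$, to rewrite the $\mu_{\geq 1}$-contribution in terms of $\iota_{\vec v}\eta$) turns the equation $\eta \wedge \iota_{\vec v}\mu = 0$ into a diagonal linear system for the $\sE$-component of $\vec v$, whose unique solution exists because $g$ is invertible on $U$. This yields the desired lift $\vec v$, completing the proof.

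The main technical nuisance will be the bookkeeping of signs and multi-indices in the explicit local computation; conceptually, however, the argument rests on the single observation that $\eta$ annihilates every section of $\sE$, so the only obstruction to splitting sequence~\eqref{eq:quot} is encoded by the transverse component $\mu_0$ of $\mu$, whose nowhere-vanishing has already been extracted from Proposition~\ref{prop:forms-1}.
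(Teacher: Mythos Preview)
Your argument is correct, and it reorganises the paper's brute-force computation around a single conceptual identity. The paper proves the claim by choosing frames $e_1,\dots,e_{n-r}$ and $\vec q_1,\dots,\vec q_r$, writing out $\eta$, $\mu$, $\delta_{\sQ}$, $\delta_{\sT_X}$ explicitly, and then tracking $q_\ell$ through the four maps defining $\phi$ to arrive at $\phi(q_\ell)=\vec q_\ell+\sum_j\pm a_{\ell j}\,\vec e_j$; the equality $\beta\circ\phi=\mathrm{id}$ is then read off. You instead first observe $\phi(q)=\delta_{\sT_X}^{-1}(\iota_{\tilde q}\eta\wedge\mu)$ for any lift $\tilde q$, then apply the graded Leibniz rule for interior products to obtain $\phi(q)=\tilde q-(-1)^r\delta_{\sT_X}^{-1}(\eta\wedge\iota_{\tilde q}\mu)$, reducing everything to the statement that the correction term lies in $\sE$. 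Your remaining local computation is then much lighter than the paper's: it only has to show that the map $\sE|_U\to\eta\wedge\Omega^{s-1}_U$, $\vec u\mapsto\eta\wedge\iota_{\vec u}\mu$, is surjective, which follows from the nowhere-vanishing of $g$.

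Two comments. First, the detour through a special lift $\vec v$ with $\eta\wedge\iota_{\vec v}\mu=0$ is correct but slightly longer than necessary: once you know $\delta_{\sT_X}(\sE|_U)=(-1)^r\,\eta\wedge\iota_{\sE|_U}\mu$ and that $\iota_{\vec u}\mu_{\geq 1}$ contributes nothing after wedging with $\eta$ when $\vec u\in\sE$, you see directly that $\delta_{\sT_X}(\sE|_U)$ equals the full subbundle $\eta\wedge\Omega^{s-1}_U$ of $\Omega^{n-1}_U$, and $\eta\wedge\iota_{\tilde q}\mu$ visibly lies there. Second, what your approach buys is a clear separation between the global input (nowhere-vanishing of $\eta\wedge\mu$) and the purely algebraic mechanism (Leibniz rule plus the annihilation $\iota_{\sE}\eta=0$); the paper's approach buys complete explicitness, including the precise coefficients $a_{\ell j}$ coming from the expansion of $\mu$.
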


\subsubsection*{Step 3 in the proof of Corollary~\ref*{cor:split}: preparation for proof of Claim~\ref*{claim:XXL}}

It suffices to show Claim~\ref{claim:XXL} locally, over sufficiently small open
sets $U \subseteq X^\circ$. We will prove Claim~\ref{claim:XXL} by explicit
computation, choosing frames for the bundles $\sE$, $\sT_X$ and $\sQ$ to write
down the morphism $\phi$ and all relevant differential forms. Indeed, choosing
$U$ small enough, we can find frames
\begin{align*}
  & \; e_1, \ldots, e_{n-r}, && \text{\ldots frame of $\sE|_U$}, \\
  & \; \vec q_1, \ldots, \vec q_r, \alpha(e_1), \ldots, \alpha(e_{n-r}) && \text{\ldots frame of $\sT_X|_U$, and}\\
  & \; \beta(\vec q_1), \ldots, \beta(\vec q_{n-r}) && \text{\ldots frame of $\sQ|_U$}.
  \intertext{To simplify notation, set
    $$
    \vec e_i := \alpha(e_i) \in \sT_X(U) \text{\quad and \quad} q_j := \beta(\vec
    q_j) \in \sQ(U).
    $$
    We denote the dual frames by
  }
  & e_1^*, \ldots, e_{n-r}^* && \text{\ldots frame of $\sE^*|_U$}, \\
  & \vec q_1^* \ldots, \vec q_r^*, \vec e_1^*, \ldots, \vec e_{n-r}^* && \text{\ldots frame of $\Omega^1_X|_U$, and} \\
  & q_1^*, \ldots, q_r^* && \text{\ldots frame of $\sQ^*|_U$}.
\end{align*}
Observe that $\alpha^* (\vec e_i^*) = e_i^*$ and $\beta^*(q_j^*) = \vec q_j^*$,
for all indices $i$ and $j$. Scaling the frame $\vec q_1, \ldots, \vec q_r$
appropriately, we may assume that
\begin{equation}\label{eq:balthasar}
  \eta_{\sQ}|_U = q_1^* \wedge \cdots \wedge q_r^* \quad\text{and}\quad 
  \eta|_U = \vec q_1^* \wedge \cdots \wedge \vec q_r^*.
\end{equation}
Scaling the frame $e_1, \ldots, e_{n-r}$, we can then find forms $\sigma_1,
\ldots \sigma_r \in \Omega_X^{n-r-1}(U)$ such that the complementary form $\mu$
can be written as
\begin{equation}\label{eq:caspar}
  \mu|_U = \vec e_1^* \wedge \cdots \wedge \vec e_{n-r}^* + \sum_{i=1}^r \vec q_i^*
  \wedge \sigma_i.
\end{equation}
\begin{rem}
  We do not claim that Equation~\eqref{eq:caspar} defines the forms $\sigma_i$
  uniquely. In fact, there will almost always be several ways to write $\mu|_C$
  in this way. If $n-r-1 = 0$, then the $\sigma_i$ are just functions.
\end{rem}
\begin{rem}\label{rem:ewm}
  On the open set $U$, Equations~\eqref{eq:balthasar} and \eqref{eq:caspar}
  together imply that the globally defined form $\eta \wedge \mu$ is given as
  $$
  \bigl( \eta \wedge \mu \bigr)|_U = \vec q_1^* \wedge \cdots \wedge \vec q_r^*
  \wedge \vec e_1^* \wedge \cdots \wedge \vec e_{n-r}^*.
  $$
\end{rem}

If $n-r-1 > 0$, then the forms $\sigma_i$ of Equation~\eqref{eq:caspar} can be
decomposed further, writing them as sums of pure tensors that only involve $\vec
e_1^*, \ldots, \vec e_{n-r}^*$, and tensors that involve $\vec q_1^*, \ldots,
\vec q_r^*$,
\begin{equation}\label{eq:melchior}
  \sigma_i = \sum_{j=1}^{n-r} a_{ij} \cdot \vec e_1^* \wedge \cdots \xcancel{\vec e_{j}^*} \cdots \wedge \vec e_{n-r}^* + 
  \sum_{k=1}^r \vec q_k^* \wedge \tau_{ik},
\end{equation}
for suitable functions $a_{ij} \in \sO_{X^\circ}(U)$ and forms $\tau_{ik} \in
\Omega_X^{n-r-2}(U)$.

\begin{rem}
  Again, we do not claim that Equation~\eqref{eq:melchior} defines the forms
  $\tau_{ik}$ uniquely. In contrast, note that the functions $a_{ij}$
  are uniquely determined by \eqref{eq:caspar} and \eqref{eq:melchior}.
\end{rem}

\subsubsection*{Step 4 in the proof of Corollary~\ref*{cor:split}: proof of Claim~\ref*{claim:XXL} and end of proof}

We will prove Claim~\ref*{claim:XXL} only in case where $n-r-1 > 0$. The case
where $n-r=1$ follows exactly the same pattern, but is easier. To be precise, we
will prove that
\begin{equation}\label{eq:aD}
  \begin{array}{rccccrccc}
    \phi|_U : & \sQ|_{U} & \to & \sT_X|_{U} \\[1mm]
    &  q_{\ell} & \mapsto & \vec q_{\ell} + \sum_{j=1}^{n-r} \pm a_{\ell j} \cdot \vec e_j
  \end{array}
\end{equation}
where the $a_{\ell j} \in \sO_{X^\circ}(U)$ are the functions introduced in
Equation~\eqref{eq:melchior} above. 
Therefore, $\beta|_U \circ \phi|_U = \mathrm {id}_{\sQ|_U},$ establishing Claim~\ref{claim:XXL}.
By definition of $\phi$, \eqref{eq:aD} is equivalent
to showing that
\begin{equation}\label{eq:aD2}
  \delta_{\sT_X}^{-1} \left( \left( (\wedge^{r-1} \beta^*) \left(\delta_{\sQ}(q_{\ell}) \right) 
    \right) \wedge \mu \right) = \vec q_{\ell} + \sum_{j=1}^{n-r} \pm a_{\ell j} \cdot \vec e_j \quad \text{for all indices $\ell$.}
\end{equation}

The computation proving \eqref{eq:aD2} uses the following elementary
observation.

\begin{obs}\label{obs:dcoo}
  It follows immediately from \eqref{eq:balthasar} and from Remark~\ref{rem:ewm}
  that the sheaf morphisms $\delta_{\sQ}$ and $\delta_{\sT_X}$ introduced in
  \eqref{eq:7so} have the following explicit description on $U$,
  $$
  \begin{array}{rccccrccc}
    \delta_{\sQ}|_U : & \sQ|_{U} & \to & \wedge^{r-1} \sQ^*|_{U} \\[1mm]
    &  q_{\ell} & \mapsto & (-1)^{\ell+1} \cdot q_1^* \wedge \cdots \xcancel{q_{\ell}^*} \cdots \wedge q_r^* \\[4mm]
    \delta_{\sT_X}|_U : & \sT_X|_{U} & \to & \Omega^{n-1}_X|_{U} \\[1mm]
    & \vec q_{\ell} & \mapsto & (-1)^{\ell+1\phantom{r+}} \cdot \vec q_1^* \wedge \cdots \xcancel{\vec q_{\ell}^*} \cdots \wedge \vec q_r^* \wedge  \vec e_1^* \wedge \cdots \wedge \vec e_{n-r}^*\\
    & \vec e_{\ell} & \mapsto & (-1)^{r+\ell+1} \cdot \vec q_1^* \wedge \cdots \wedge \vec q_r^* \wedge  \vec e_1^* \wedge \cdots \xcancel{e_{\ell}^*} \cdots \wedge \vec e_{n-r}^*.
  \end{array}
  $$
\end{obs}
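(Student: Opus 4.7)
The statement of Observation~\ref{obs:dcoo} is purely an unpacking of the abstract definitions given in~\eqref{eq:7so} in the explicit local frames chosen in Step~3. The plan is therefore to verify both formulas by a direct calculation, using the standard identity for the contraction of a vector with a wedge of one-forms: if $\omega_1, \ldots, \omega_k$ are one-forms and $v$ is a tangent vector, then
$$
(\omega_1 \wedge \cdots \wedge \omega_k)(v,\,\cdot\,) \;=\; \sum_{i=1}^{k} (-1)^{i+1}\, \omega_i(v)\cdot \omega_1 \wedge \cdots \widehat{\omega_i} \cdots \wedge \omega_k.
$$

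First I would treat $\delta_{\sQ}$. By \eqref{eq:7so} and \eqref{eq:balthasar}, $\delta_{\sQ}|_U(q) = (q_1^* \wedge \cdots \wedge q_r^*)(q,\,\cdot\,)$. Applying the contraction identity to $v = q_\ell$ and using the dual-frame relation $q_i^*(q_\ell) = \delta_{i\ell}$, all terms vanish except $i = \ell$, which contributes precisely $(-1)^{\ell+1}\, q_1^* \wedge \cdots \widehat{q_\ell^*} \cdots \wedge q_r^*$, matching the Observation.

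Next, I would treat $\delta_{\sT_X}$. Remark~\ref{rem:ewm} identifies $(\eta \wedge \mu)|_U$ with the top-degree form $\vec q_1^* \wedge \cdots \wedge \vec q_r^* \wedge \vec e_1^* \wedge \cdots \wedge \vec e_{n-r}^*$. Applying the same contraction identity and using the duality relations $\vec q_i^*(\vec q_\ell) = \delta_{i\ell}$, $\vec e_i^*(\vec q_\ell) = 0$, $\vec q_i^*(\vec e_\ell) = 0$, $\vec e_i^*(\vec e_\ell) = \delta_{i\ell}$, one immediately reads off the two cases: contracting with $\vec q_\ell$ leaves only the contribution from position $\ell$ in the wedge (sign $(-1)^{\ell+1}$, removing $\vec q_\ell^*$), while contracting with $\vec e_\ell$ leaves only the contribution from position $r+\ell$ (sign $(-1)^{r+\ell+1}$, removing $\vec e_\ell^*$). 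These are exactly the formulas stated.

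There is no genuine obstacle here; the entire content of the Observation lies in carefully bookkeeping the alternating signs arising from the contraction identity and noting that the chosen frames are dual to the selected local frames of $\sE$, $\sT_X$, and $\sQ$. The main care needed is to observe that $\vec e_\ell^*$ sits in position $r+\ell$ within the full wedge in Remark~\ref{rem:ewm}, which is what produces the extra factor of $(-1)^r$ in the second formula.
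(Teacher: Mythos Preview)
Your proposal is correct and is exactly the computation the paper has in mind when it says the formulas ``follow immediately'' from \eqref{eq:balthasar} and Remark~\ref{rem:ewm}; the paper does not spell out a proof, but the standard contraction identity you use is the only thing needed, and your bookkeeping of the sign $(-1)^{r+\ell+1}$ coming from the position of $\vec e_\ell^*$ in the full wedge is precisely the point.
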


With Observation~\ref{obs:dcoo} in place, Equation~\eqref{eq:aD2} is now shown
easily by direct computation as follows. 
\begin{align*}
  {\sf A} & := \delta_{\sQ}(q_{\ell}) \\
  & \phantom{:}= (-1)^{\ell+1} \cdot q_1^* \wedge \cdots \xcancel{q_{\ell}^*} \cdots \wedge q_r^* && \text{Obs.~\ref{obs:dcoo}} \displaybreak[0]\\[4mm]
  {\sf B} & := \bigl( \wedge^{r-1}\beta^*\bigr)({\sf A}) \\
  & \phantom{:}= (-1)^{\ell+1} \cdot \vec q_1^* \wedge \cdots \xcancel{q_{\ell}^*} \cdots \wedge \vec q_r^* && \text{Defn. of $\beta^*$} \displaybreak[0]\\[4mm]
  {\sf C} & := {\sf B} \wedge \mu \\
  & \phantom{:}= (-1)^{\ell+1} \cdot \vec q_1^* \wedge \cdots \xcancel{q_{\ell}^*} \cdots \wedge \vec q_r^* \wedge \left( \vec e_1^* \wedge \cdots \wedge \vec e_{n-r}^* + \sum_{i=1}^r \vec q_i^* \wedge \sigma_i \right) && \text{by \eqref{eq:caspar}}\\
  & \phantom{:}= (-1)^{\ell+1} \cdot \vec q_1^* \wedge \cdots \xcancel{q_{\ell}^*} \cdots \wedge \vec q_r^* \wedge \left( \vec e_1^* \wedge \cdots \wedge \vec e_{n-r}^* + \vec q_{\ell}^* \wedge \sigma_i \right) \\
  & \phantom{:}= (-1)^{\ell+1} \cdot  \vec q_1^* \wedge \cdots \xcancel{q_{\ell}^*} \cdots \wedge \vec q_r^* \wedge \Biggl( \vec e_1^* \wedge \cdots \wedge \vec e_{n-r}^*  \\
  & \qquad\qquad + \vec q_{\ell}^* \wedge \Biggl( \sum_{j=1}^{n-r} a_{\ell j} \cdot \vec e_1^* \wedge \cdots \xcancel{\vec e_{j}^*} \cdots \wedge \vec e_{n-r}^* + \sum_{k=1}^r \vec q_k^* \wedge \tau_{ik} \Biggr) \Biggr) && \text{by \eqref{eq:melchior}}\\
  & \phantom{:}= (-1)^{\ell+1} \cdot  \vec q_1^* \wedge \cdots \xcancel{q_{\ell}^*} \cdots \wedge \vec q_r^* \wedge \vec e_1^* \wedge \cdots \wedge \vec e_{n-r}^* \\
  & \qquad\qquad + \sum_{j=1}^{n-r} \pm a_{\ell j} \cdot  \vec q_1^* \wedge \cdots \wedge \vec q_r^* \wedge \vec e_1^* \wedge \cdots \xcancel{\vec e_{j}^*} \cdots \wedge \vec e_{n-r}^* \displaybreak[0]\\[4mm]
\intertext{and finally}
  {\sf D} & := \delta_{\sT_X}^{-1}({\sf C}) = \vec q_{\ell} + \sum_{j=1}^{n-r} \pm a_{\ell j} \cdot \vec e_j &&  \text{Obs.~\ref{obs:dcoo}}
\end{align*}

This finishes the proof of Equations~\eqref{eq:aD2}, \eqref{eq:aD},
Claim~\ref{claim:XXL}, and hence of Corollary~\ref{cor:split}. \qed

\section{Proof of Theorem~\ref*{decoII}}
\label{sect:deco}

We have divided the proof of Theorem~\ref{decoII} into a sequence of steps, each
formulated as a separate result. Some of these statements might be of
independent interest. The proof of Theorem~\ref{decoII} follows quickly from
these preliminary steps and is given in Section~\ref{sect:proofofdecoII}.

\begin{thm}[Splitting the tangent sheaf of varieties with trivial canonical bundle]\label{deco}
  Let $X$ be a normal $n$-dimensional projective variety with at worst canonical
  singularities. Assume that $\omega_X \cong \sO_X$ and that $\wtilde q(X) = 0$.
  Let $h = (H_1, \ldots, H_{n-1})$ be ample divisors on $X$ and assume that
  $\sT_X$ is not $h$-stable. Let $0 \subsetneq \sE \subsetneq \sT_X$ be a
  saturated destabilising subsheaf, that is, a proper subsheaf with non-negative
  slope $\mu_h(\sE) \geq 0$ and torsion free quotient $\sT_X / \sE$.

  Then there exists a number $M \in \bN^+$ such that $(\det \sE)^{[M]} \cong
  \sO_X$. Further, there exists a finite cover $f: \wtilde X \to X$, étale in
  codimension one, and a proper subsheaf $\sF \subsetneq \sT_{\wtilde X}$ such
  that the following holds.
  \begin{enumerate}
  \item\label{il:split} The tangent sheaf of $\wtilde X$ decomposes as a direct
    sum, $\sT_{\wtilde X} \cong \bigl( f^{[*]}\sE \bigr) \oplus \sF$.
  \item\label{il:trivdet} Both summands in (\ref{deco}.\ref{il:split}) have
    trivial determinant. In other words, $\det f^{[*]}\sE \cong \sO_{\wtilde X}$
    and $\det \sF \cong \sO_{\wtilde X}$.
  \end{enumerate}
\end{thm}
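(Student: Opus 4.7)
The plan is to combine Propositions~\ref{prop:semistable}, \ref{prop:pseudoeffectivitygeneralised}, and~\ref{prop:angela} to force $\det \sE$ to be torsion, and then to apply Corollary~\ref{cor:split} on a suitable cyclic cover. First, since $\omega_X \cong \sO_X$ gives $\mu_h(\sT_X) = 0$ and $\sT_X$ is $h$-semistable by Proposition~\ref{prop:semistable}, the assumption $\mu_h(\sE) \geq 0$ in fact forces $\mu_h(\sE) = 0$. Pick a $\bQ$-factorialisation $\phi: X' \to X$ via \cite[Lem.~10.2]{BCHM06}, so $\phi$ is small birational, $X'$ is $\bQ$-factorial with canonical singularities and $K_{X'} \equiv 0$; in particular $X'$ is not uniruled. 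Writing $\sE' := \phi^{[*]} \sE$ and $r := \rk \sE$, the inclusion $\det \sE' \hookrightarrow \wedge^r \sT_{X'}$ dualises to a nonzero morphism $\Omega^{[r]}_{X'} \to (\det \sE')^{-1}$, and Proposition~\ref{prop:pseudoeffectivitygeneralised} shows $(\det \sE')^{-1}$ is pseudoeffective. Because $\phi$ is small,
\[
(\det \sE')^{-1} \cdot \phi^*(H_1) \cdots \phi^*(H_{n-1}) = -\mu_h(\sE) = 0,
\]
so Proposition~\ref{prop:angela} yields numerical triviality of $\det \sE'$ on $X'$, and hence of $\det \sE$ on $X$.

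To promote numerical triviality to torsion, I would use $\wtilde q(X) = 0$. The discussion in Section~\ref{sec:known} shows that irregularity is preserved under resolutions of varieties with rational singularities and under small birational modifications, so $q(X') = q(X) = 0$ and hence $\Pic^0(X') = 0$. Since $X'$ is $\bQ$-factorial, a suitable multiple of $\det \sE'$ is Cartier and numerically trivial, therefore lies in the finite group $\Pic^\tau(X')$, and is torsion. Pushing forward by the small morphism $\phi$ and using that $\phi_* \phi^{[*]}$ is the identity on rank-one reflexive sheaves, we obtain $(\det \sE)^{[M]} \cong \sO_X$ for some $M \in \bN^+$. This establishes the numerical part of the theorem.

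Next, form the index-one cyclic cover $f: \wtilde X \to X$ associated to the torsion reflexive sheaf $\det \sE$ in the style of \cite[Sect.~2.E]{KM98}. Since $\det \sE$ is locally free in codimension one, $f$ is étale in codimension one; $\wtilde X$ inherits canonical singularities by \cite[Prop.~5.20]{KM98} and satisfies $\omega_{\wtilde X} \cong \sO_{\wtilde X}$. By construction $f^{[*]}(\det \sE) \cong \sO_{\wtilde X}$, and because $\sE$ is reflexive and locally free in codimension one, $\det f^{[*]} \sE \cong \sO_{\wtilde X}$ as well. Now $f^{[*]} \sE \subsetneq \sT_{\wtilde X}$ is saturated with trivial determinant on a canonical variety with trivial canonical sheaf, so Corollary~\ref{cor:split} applies directly and produces $\sF \subsetneq \sT_{\wtilde X}$ with $\sT_{\wtilde X} \cong f^{[*]} \sE \oplus \sF$. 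Taking determinants and using $\det \sT_{\wtilde X} \cong \omega_{\wtilde X}^{-1} \cong \sO_{\wtilde X}$ forces $\det \sF \cong \sO_{\wtilde X}$.

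The main obstacle is the torsion step. The sheaf $\det \sE$ is only reflexive on the possibly non-$\bQ$-factorial variety $X$, so numerical triviality in the Weil divisor class group does not immediately yield torsion. The $\bQ$-factorialisation $\phi$ is introduced precisely to convert the problem into one about the $\bQ$-Cartier class $\det \sE'$ on $X'$, at which point the classical Picard variety formalism combined with the augmented irregularity hypothesis gives what is needed. Once this is in place, the final splitting is a clean consequence of Corollary~\ref{cor:split}.
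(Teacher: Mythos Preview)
Your argument is correct and tracks the paper's proof closely: slope zero via semistability, pass to a $\bQ$-factorialisation, combine Propositions~\ref{prop:pseudoeffectivitygeneralised} and~\ref{prop:angela} to get numerical triviality of $\det \sE'$, use $q(X')=0$ to upgrade to torsion, then take an index-one cover and invoke Corollary~\ref{cor:split}. One genuine difference: the paper forms the index-one cover $g:\wtilde X' \to X'$ over the $\bQ$-factorialisation and then recovers a cover of $X$ via Stein factorisation of $\phi\circ g$ (their Claim~\ref{claim:coverSuffices}), whereas you take the cyclic cover directly over $X$ once $(\det\sE)^{[M]}\cong\sO_X$ is known. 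Your route is shorter and avoids the Stein factorisation step entirely; the paper's detour through $X'$ is not actually needed for the cover construction once torsion on $X$ has been established. The only point you assert without justification is that $f^{[*]}\sE$ is saturated in $\sT_{\wtilde X}$; this is handled exactly as in the paper's Claim~\ref{claim:XXZ}: both $f^{[*]}\sE$ and its saturation are reflexive and agree outside the codimension-two locus over which $f$ fails to be \'etale, hence coincide.
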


Before proving Theorem~\ref{deco} in Section~\ref{ssec:profdeco} below, we note
an important corollary, obtained from Theorem~\ref{deco} by iterated
application. The following notation, which summarises the conditions spelled out
in Condition~(\ref{decoII}.\ref{il:B}) of the Decomposition
Theorem~\ref{decoII}, is used in its formulation.

\begin{defn}[Strong stability]\label{def:strongStab}
  Let $X$ be a normal projective variety of dimension $n$, and $\sF$ a reflexive
  coherent sheaf of $\sO_X$-modules. We call $\sF$ \emph{strongly stable}, if
  for any finite morphism $f : \wtilde X \to X$ that is étale in codimension
  one, and for any choice of ample divisors $\wtilde H_1, \ldots, \wtilde
  H_{n-1}$ on $\wtilde X$, the reflexive pull-back $f^{[*]} \sF$ is stable with
  respect to $(\wtilde H_1, \ldots, \wtilde H_{n-1})$.
\end{defn}

\begin{cor}[Existence of a decomposition]\label{cor:deco}
  Let $X$ be a normal projective variety having at worst canonical
  singularities. Assume that $\omega_X \cong \sO_X$. Then there exists a finite
  cover $f: \wtilde X \to X$, étale in codimension one, and a decomposition
  $$
  \sT_{\wtilde X} \cong \bigoplus \sE_i,
  $$
  where the $\sE_i$ are strongly stable subsheaves of $\sT_{\wtilde X}$ with
  trivial determinants, $\det \sE_i \cong \sO_{\wtilde X}$.
\end{cor}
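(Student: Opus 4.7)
The plan is to prove the corollary by iterating Theorem~\ref{deco}, after first reducing to the case of vanishing augmented irregularity.

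I would begin by applying Corollary~\ref{cor:qgleichnull} to obtain a finite morphism $\nu : A \times Z \to X$, étale in codimension one, where $A$ is an Abelian variety and $Z$ is a normal projective variety with canonical singularities, $\omega_Z \cong \sO_Z$, and $\wtilde q(Z) = 0$. On this product the tangent sheaf splits as $\sT_{A \times Z} \cong p_A^* \sT_A \oplus p_Z^* \sT_Z$; since $\sT_A$ is trivial, I can split $p_A^* \sT_A$ further as a sum of $\dim A$ rank-one free summands, each automatically strongly stable. It thus suffices to produce, after a further cover $\wtilde Z \to Z$ étale in codimension one, a decomposition $\sT_{\wtilde Z} = \bigoplus \sE_i$ with each $\sE_i$ strongly stable and of trivial determinant; one then takes $\wtilde X := A \times \wtilde Z$ together with the composed cover.

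The core of the argument is an iteration on $Z$. I start with the trivial one-term decomposition $\sT_Z = \sT_Z$ and maintain, at each stage, a cover $f: Y \to Z$ étale in codimension one together with a decomposition $\sT_Y = \bigoplus_{i=1}^k \sE_i$ whose summands have trivial determinant. If every $\sE_i$ is strongly stable, we are done. Otherwise some $\sE_{i_0}$ is not strongly stable: choose a further cover $g: Y' \to Y$ étale in codimension one and a polarisation $h$ on $Y'$ for which $g^{[*]}\sE_{i_0}$ is not $h$-stable, and pick a saturated destabilising subsheaf $\sG \subsetneq g^{[*]}\sE_{i_0}$. Because $\det g^{[*]}\sE_{i_0} \cong \sO_{Y'}$, one has $\mu_h(\sG) \geq 0$, and since $g^{[*]}\sE_{i_0}$ is a direct summand of $\sT_{Y'}$ of slope zero, $\sG$ is saturated and destabilising in $\sT_{Y'}$ as well. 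Noting that $\wtilde q = 0$ is inherited by covers étale in codimension one, the hypotheses of Theorem~\ref{deco} are in force on $Y'$.

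Applying Theorem~\ref{deco} produces a cover $h: Y'' \to Y'$ étale in codimension one and a splitting $\sT_{Y''} = h^{[*]}\sG \oplus \sF$ with both summands of trivial determinant. Setting $\sF_{i_0} := \sF \cap (g \circ h)^{[*]}\sE_{i_0}$, a section-level comparison of the two direct sum decompositions of $\sT_{Y''}$ gives $(g \circ h)^{[*]}\sE_{i_0} = h^{[*]}\sG \oplus \sF_{i_0}$, and additivity of determinants then yields $\det \sF_{i_0} \cong \sO_{Y''}$. Combined with the pullback of the remaining $\sE_j$, this produces a decomposition of $\sT_{Y''}$ with strictly more summands, all of trivial determinant. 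As the number of summands is bounded by $\rank \sT_Z = \dim Z$, the iteration terminates, and at termination every summand is strongly stable. The hardest parts will be the refinement step --- verifying that the two decompositions of $\sT_{Y''}$ assemble into a common refinement while preserving triviality of determinants --- and, in the initial reduction, making sure that strong stability of summands constructed on $\wtilde Z$ actually descends to strong stability on the product $A \times \wtilde Z$; the latter will rely on the triviality of $\sT_A$ and the product structure of the cover.
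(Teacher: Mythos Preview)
Your core argument---iterate Theorem~\ref{deco} and terminate by the rank bound, refining via the linear-algebra splitting $(g\circ h)^{[*]}\sE_{i_0}=h^{[*]}\sG\oplus\bigl(\sF\cap(g\circ h)^{[*]}\sE_{i_0}\bigr)$ on the smooth locus and then passing to reflexive hulls---is exactly the paper's. The paper phrases it as ``choose a cover on which the number of summands is maximal, then derive a contradiction from any failure of strong stability,'' but this is your iteration viewed contrapositively, and the refinement step is identical.

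The one structural difference is your initial reduction to $A\times Z$ with $\wtilde q(Z)=0$ via Corollary~\ref{cor:qgleichnull}. The paper's proof does not do this: it applies Theorem~\ref{deco} directly on covers of $X$, tacitly using $\wtilde q(X)=0$ (and indeed this is the only case in which the corollary is invoked in the proof of Theorem~\ref{decoII}). Your reduction is the honest way to make that hypothesis available, but it creates precisely the difficulty you flag at the end: strong stability of $\sE_i$ on $\wtilde Z$ does not obviously pass to strong stability of $p_{\wtilde Z}^*\sE_i$ on $A\times\wtilde Z$, because finite covers of the product étale in codimension one need not be products, and polarisations need not split. The paper never faces this because it never factors out $A$. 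The cleanest fix is to run your iteration directly on covers of $X$ under the standing assumption $\wtilde q(X)=0$; then the lifting issue evaporates and your argument and the paper's coincide verbatim.
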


\begin{rem}
  We note that the summands $\sE_i$ in the decomposition established in
  Corollary~\ref{cor:deco} are automatically saturated. Indeed, as a subsheaf of
  the torsion-free sheaf $\sT_{\widetilde X}$ each $\sE_i$ is torsion-free. The
  quotient of $\sT_{\widetilde X}$ by any of the summands is a direct sum of the
  remaining summands, hence torsion-free.
\end{rem}

\begin{proof}[Proof of Corollary~\ref{cor:deco}]
  We need to find a cover $f: \wtilde X \to X$ and a decomposition $\sT_{\wtilde
    X} \cong\bigoplus \sE_i$, such that all factors $\sE_i$ are strongly
  stable. Since the rank of $\sT_X$ is finite, there exists a finite cover $f:
  \wtilde X \to X$, étale in codimension one, with a proper decomposition
  \begin{equation}\label{eq:stablecomp}
    \sT_{\wtilde X} \cong\bigoplus_{i\geq 1} \sE_i,
  \end{equation}
  in which the number of direct summands is maximal.  We claim that each summand
  is then automatically strongly stable.

  We argue by contradiction and assume that there exists a further finite cover,
  $g: \what X \to \wtilde X$, étale in codimension one, and a list of ample
  divisors $\what h = (\what H_1, \ldots, \what H_{n-1})$ on $\what X$ such that
  the reflexive pull-back of one of the summands, say $\what \sE_1 :=
  g^{[*]}\sE_1$, is not stable with respect to $\what h$. Let $0 \subsetneq
  \what \sS \subsetneq \what \sE_1$ be a $\what h$-destabilising subsheaf. By
  \cite[Prop.~7.6(b)]{Kob87}, whose proof carries over without change from the
  smooth to the singular setup, we may assume that $\widehat \sS$ is saturated
  in $\what \sE_1$ and therefore also in $\sT_{\what X}$. Since $\what \sE_1$
  and $\sT_{\what X}$ both have vanishing $\what h$-slope, the sheaf $\what \sS$
  is also a destabilising subsheaf for $\sT_{\what X}$. Replacing $\what X$ by a
  further cover, if necessary, Theorem~\ref{deco} therefore allows to assume
  without loss of generality that the tangent sheaf splits, say $\sT_{\what X} =
  \what \sS \oplus \what \sQ$. Since the sheaves $\sT_{\what X}$, $\what \sE_1$,
  $\what \sS$ and $\what \sQ$ are all locally free on the smooth locus of $\what
  X$, elementary linear algebra gives a decomposition
  $$
  \what \sE_1|_{\what X_{\reg}} = \what \sS|_{\what X_{\reg}} \,\, \oplus \,\, (\what
  \sE_1 \cap \what \sQ)|_{\what X_{\reg}}.
  $$
  Taking double duals, we obtain a decomposition $\what \sE_1 = \what \sS^{**}
  \oplus (\what \sE_1 \cap \what \sQ)^{**}$, which contradicts maximality of the
  decomposition~\eqref{eq:stablecomp} and therefore finishes the proof of
  Corollary~\ref{cor:deco}.
\end{proof}

\begin{rem}[Uniqueness of the decomposition]\label{rem:uniqueness}
  Given a variety $X$ as in Corollary~\ref{cor:deco}, let $f_1 : \wtilde X_1 \to
  X$ and $f_2 : \wtilde X_1 \to X$ be two finite morphisms, étale in codimension
  one, such that the tangent bundles split into strongly stable summands,
  $$
  \sT_{\wtilde X_1} \cong \bigoplus_{i=1}^N \sE^1_i \quad\text{and}\quad \sT_{\wtilde
    X_2} \cong \bigoplus_{j=1}^M \sE^2_j.
  $$
  Let $\what X$ be an irreducible component of the normalisation of the fibered
  product $\wtilde X_1 \times_X \wtilde X_2$. We obtain a
  diagram
  $$
  \xymatrix{ %
    \what X \ar[r]^{g_1} \ar[d]_{g_2} & \wtilde X_1 \ar[d]^{f_1} \\
    \wtilde X_2 \ar[r]_{f_2} & X,
  }
  $$
  where $g_1$, $g_2$ are again finite and étale in codimension one. Since
  $\sT_{\wtilde X} \cong g_1^{[*]} \sT_{\wtilde X_1} \cong g_2^{[*]}
  \sT_{\wtilde X_2}$, we obtain decompositions
  \begin{equation}\label{eq:DwhatTX}
    \sT_{\what X} \,\, \cong \,\, \bigoplus_{i=1}^{N} g_1^{[*]}\sE^1_i \,\, \cong \,\, \bigoplus_{j=1}^M g_2^{[*]}\sE^2_j.
  \end{equation}
  Choosing any ample polarisation on $\what X$, stability of the summands
  implies that any morphism $\what \sE^1_i \to \what \sE^2_j$ must either be
  trivial, or an isomorphism. It follows that the
  decompositions~\eqref{eq:DwhatTX} satisfy the following extra conditions
  \begin{enumerate-p}
    \setcounter{enumi}{\value{equation}}
  \item the number of summands in the decompositions agrees, $N=M$, and

  \item up to permutation of the summands we have isomorphisms $g_1^{[*]}\sE^1_i
    \cong g_2^{[*]}\sE^2_i$ for all $i \in \{1, \dots, N\}$.
  \end{enumerate-p}
  In that sense, the decomposition found in Corollary~\ref{cor:deco} is unique.
\end{rem}

\subsection{Proof of Theorem~\ref*{deco}}
\label{ssec:profdeco}

Since the proof of Theorem~\ref{deco} is somewhat long, we have subdivided it
into four relatively independent steps, given in
Sections~\ref{sssec:S1}--\ref{sssec:S4} below. Figure~\vref{fig:ssx} gives an
overview of the spaces and morphisms constructed in the course of the proof.

\begin{figure}
  \centering
  
  $$
  \xymatrix{ %
    \wtilde X' \ar[rrrrr]^{g,\text{ index-one cover for }\sE'}_{\text{finite,
        étale in codimension one}} \ar@{=}[d] &&&&& X' \ar[rrrrr]^{\phi, \text{
        $\bQ$-factorialisation}}_{\text{small, birational}} &&&&&
    X  \ar@{=}[d] \\
    \wtilde X' \ar[rrrrr]^{\psi,\text{ Stein factorisation I}}_{\text{connected
        fibers, small, birational}} &&&&& \wtilde X \ar[rrrrr]^{f,\text{ Stein
        factorisation II}}_{\text{finite, étale in codimension one}} &&&&& X }
  $$

  \caption{Spaces and morphisms constructed in the proof of Theorem~\ref*{deco}}
  \label{fig:ssx}
\end{figure}

\subsubsection{Step~1: The subsheaf $\sE \subsetneq \sT_X$}
\label{sssec:S1}

To start the proof of Theorem~\ref{deco}, we discuss the structure of the
saturated destabilising sheaf $\sE \subsetneq \sT_X$.  First note that due to
torsion-freeness of $\sE$ and of $\sT_X/\sE$, the sheaf $\sE$ is a
sub-vectorbundle of $\sT_X$ outside of a set of codimension two. Next, we
compute its slope.

\begin{lem}[Slopes of destabilising subsheaves]\label{lem:slopeZero}
  In the setup of Theorem~\ref{deco}, any destabilising subsheaf of $\sT_X$ has
  slope zero. In particular, $\mu_h(\sE)=0$.
\end{lem}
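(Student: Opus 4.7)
The proof of this lemma should be quite short, since it is essentially a direct consequence of Proposition~\ref{prop:semistable} combined with the hypothesis $K_X \equiv 0$.

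First I would compute the slope of the tangent sheaf itself. Since $c_1(\sT_X) = -K_X$ and $K_X$ is numerically trivial by assumption (we even have $\omega_X \cong \sO_X$), we get
\[
\mu_h(\sT_X) \;=\; \frac{-K_X \cdot H_1 \cdots H_{n-1}}{n} \;=\; 0.
\]

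Next I would invoke Proposition~\ref{prop:semistable}, which tells us that $\sT_X$ is semistable with respect to any polarisation, in particular with respect to $h$. By definition of semistability, every coherent subsheaf $\sS \subseteq \sT_X$ of positive rank satisfies $\mu_h(\sS) \leq \mu_h(\sT_X) = 0$. Applying this to any destabilising subsheaf, which by the definition stated in Theorem~\ref{deco} has non-negative slope $\mu_h(\sS) \geq 0$, both inequalities force $\mu_h(\sS) = 0$. Specialising to the given saturated destabilising subsheaf $\sE$ yields $\mu_h(\sE) = 0$, as claimed.

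There is no real obstacle here; the only subtlety worth flagging in the write-up is the sign convention for the slope (making sure $c_1(\sT_X) = -K_X$ is applied correctly) and the observation that the lemma applies uniformly to every destabilising subsheaf, not merely to the specific $\sE$ fixed in the statement of Theorem~\ref{deco} — this uniform formulation will be convenient in the later steps of the proof.
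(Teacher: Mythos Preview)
Your proposal is correct and matches the paper's own argument essentially line for line: compute $\mu_h(\sT_X)=0$ from $K_X\equiv 0$, invoke Proposition~\ref{prop:semistable} for the upper bound $\mu_h(\sS)\le 0$, and combine with the defining inequality $\mu_h(\sS)\ge 0$ for a destabilising subsheaf. The paper's proof is slightly terser (it leaves the computation of $\mu_h(\sT_X)$ implicit), but there is no substantive difference.
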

\begin{proof}
  Let $\sG$ be any destabilising subsheaf of $\sT_X$. Since $K_X$ is assumed to
  be trivial, it follows that $\sG$ has non-negative slope, $\mu_h(\sG) \geq 0$.
  On the other hand, we know from Proposition~\ref{prop:semistable} that $\sT_X$
  is $h$-semistable. Consequently, we have $\mu_h(\sG) = 0$, as claimed.
\end{proof}

\subsubsection{Step~2: The $\mathbb{Q}$-factorialisation of $X$}
\label{sssec:S2}

Let $\phi: X' \to X$ be a $\bQ$-factorialisation of $X$, that is, a small
birational morphism where $X'$ is $\bQ$-factorial and has only canonical
singularities. The existence of $\phi$ is established in
\cite[Lem.~10.2]{BCHM06}. Since $\phi$ is small, it is clear that $\omega_{X'}
\cong \phi^* \omega_X \cong \sO_{X'}$, and that $\wtilde q(X') = 0$.  Set $\sE'
:= \phi^{[*]}(\sE)$. Since $\sE'$ injects into the tangent sheaf $\sT_{X'}$ away
from a set of codimension two, it follows from reflexivity that $\sE'$ injects
into $\sT_{X'}$ everywhere. We can therefore view it as a proper subsheaf $\sE'
\subsetneq \sT_{X'}$. Notice that $\sE'$ is saturated in $ \sT_{X'}$, since
$\phi$ is small.

\begin{claim}\label{claim:coverSuffices}
  To prove statements (\ref{deco}.\ref{il:split}) and
  (\ref{deco}.\ref{il:trivdet}) of Theorem~\ref{deco}, it suffices to find a
  finite cover $g: \wtilde X' \to X'$, étale in codimension one, and a
  decomposition
  \begin{equation}\label{eq:rsdx}
    \sT_{\wtilde X'} \cong \sF' \oplus \left(g^{[*]}\sE' \right)
  \end{equation}
  where $\det \sF' \cong \det \left(g^{[*]}\sE'\right) \cong \sO_{\wtilde X'}$.
\end{claim}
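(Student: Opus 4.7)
The plan is to descend a decomposition of $\sT_{\wtilde X'}$ on a cover of $X'$ to a decomposition of $\sT_{\wtilde X}$ on a suitable cover of $X$ by applying Stein factorisation to the composed morphism $\phi \circ g : \wtilde X' \to X$. Write $\phi \circ g = f \circ \psi$, where $\psi : \wtilde X' \to \wtilde X$ has connected fibres and $f : \wtilde X \to X$ is finite. The entire argument then amounts to checking that $\psi$ is a small birational morphism and that $f$ is étale in codimension one, after which the decomposition on $\wtilde X'$ can be pushed forward.

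First I would verify the geometric properties of $\psi$ and $f$ summarised in Figure~\vref{fig:ssx}. Since $\phi$ is birational and $g$ is finite, the composition $\phi \circ g$ is generically finite of degree $\deg g$, which forces $\psi$ to be birational and $\deg f = \deg g$. Any $\psi$-exceptional divisor would have to map, via $f$, into the $\phi$-exceptional set; but that set is small (because $\phi$ is small) and the preimage in $\wtilde X'$ under $g$ remains small (because $g$ is étale in codimension one, hence equidimensional). Thus $\psi$ has no exceptional divisors, i.e.\ it is small. An identical codimension count shows that $f$ is étale in codimension one: the ramification divisor of $f$ would pull back under $\psi$ to a divisor inside the codimension-two locus where $\phi \circ g$ fails to be étale, which is impossible. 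By \cite[Prop.~5.20]{KM98}, $\wtilde X$ then has canonical singularities, $\omega_{\wtilde X} \cong \sO_{\wtilde X}$, and $\wtilde q(\wtilde X) = 0$.

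Next I would push the decomposition \eqref{eq:rsdx} forward along $\psi$. Because $\psi$ is small and birational between normal varieties, the functor $\psi_*$ is inverse to $\psi^{[*]}$ on reflexive sheaves, and in particular $\psi_* \sT_{\wtilde X'} = \sT_{\wtilde X}$. Applying $\psi_*$ to \eqref{eq:rsdx} yields
$$
\sT_{\wtilde X} \;\cong\; \psi_* \sF' \,\oplus\, \psi_* \bigl( g^{[*]} \sE' \bigr).
$$
Using the compatibility of reflexive pullback with composition, on the open locus where everything is well-behaved we have
$$
g^{[*]} \sE' \;=\; g^{[*]} \phi^{[*]} \sE \;=\; (\phi \circ g)^{[*]} \sE \;=\; (f \circ \psi)^{[*]} \sE \;=\; \psi^{[*]} \bigl( f^{[*]} \sE \bigr),
$$
and this identification extends globally by reflexivity. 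Hence $\psi_* \bigl( g^{[*]} \sE' \bigr) = f^{[*]} \sE$, and setting $\sF := \psi_* \sF'$ one obtains $\sT_{\wtilde X} \cong \sF \oplus f^{[*]} \sE$, which is Statement~(\ref{deco}.\ref{il:split}). Triviality of determinants in Statement~(\ref{deco}.\ref{il:trivdet}) is preserved under the same operation: since $\psi$ is small and $\sF'$, $g^{[*]} \sE'$ are reflexive with trivial determinant, $\det \sF = (\det \psi_* \sF')^{**} \cong \sO_{\wtilde X}$, and likewise $\det f^{[*]} \sE \cong \sO_{\wtilde X}$.

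The main obstacle in this reduction is the first paragraph: one must be careful to verify, using only that $\phi$ is small and $g$ is étale in codimension one, that the Stein factorisation produces a small birational $\psi$ and a finite cover $f$ étale in codimension one. Once this is checked, the sheaf-theoretic descent in the second paragraph is essentially formal, relying only on the standard fact that reflexive pushforward along a small birational morphism is inverse to reflexive pullback. Note that the statement about the existence of $M$ with $(\det \sE)^{[M]} \cong \sO_X$ is not part of what the claim reduces; it will have to be proved directly once $(\det \sE)$ is shown to become trivial on a finite cover étale in codimension one, but that is outside the scope of the present claim.
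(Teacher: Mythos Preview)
Your proposal is correct and follows essentially the same route as the paper: take the Stein factorisation $\phi \circ g = f \circ \psi$, observe that $\psi$ is small birational and $f$ is finite étale in codimension one, then push the decomposition~\eqref{eq:rsdx} forward along $\psi$ and identify $\psi_*\bigl(g^{[*]}\sE'\bigr)$ with $f^{[*]}\sE$. The paper's proof is terser---it simply asserts the properties of $\psi$ and $f$ (referring to Figure~\ref{fig:ssx}) and writes the reflexive push-forward in one displayed line---whereas you spell out the codimension arguments explicitly; the only cosmetic difference is that the paper writes $(\psi_* \,\cdot\,)^{**}$ throughout, while you invoke directly that $\psi_*$ preserves reflexivity for small birational $\psi$ between normal varieties.
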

\begin{proof}
  As indicated in Figure~\ref{fig:ssx}, consider the Stein factorisation of the
  composed morphism $\phi \circ g$,
  $$
  \xymatrix{%
    \wtilde X' \ar[rrr]_{\psi, \text{ small birational}}
    \ar@/^0.4cm/[rrrrrrr]^{\phi \circ g} &&& 
    \wtilde X \ar[rrrr]_{f, \text{ finite, étale in codim.~1}} &&&& 
    X.
  }
  $$
  The reflexive push-forward of~\eqref{eq:rsdx},
  $$
  \sT_{\wtilde X} \cong \bigl(\psi_* \sT_{\wtilde X'} \bigr)^{**} \cong
  \left(\psi_* \bigl( \sF' \oplus g^{[*]}\sE' \bigr)\right)^{**} \cong
  \underbrace{\bigl( \psi_*\sF' \bigr)^{**}}_{=: \sF} \oplus
  \underbrace{\bigl(\psi_* (g^{[*]}\sE') \bigr)^{**}}_{\cong f^{[*]}\sE},
  $$
  then yields a decomposition on $\wtilde X$ that satisfies both
  (\ref{deco}.\ref{il:split}) and (\ref{deco}.\ref{il:trivdet}), thus finishing
  the proof of Claim~\ref{claim:coverSuffices}.
\end{proof}

\subsubsection{Step~3: High reflexive powers of $\det \sE$ and  $\det \sE'$ are trivial}
\label{sssec:S3}

We need to show that high reflexive powers of $\det \sE$ and $\det \sE'$ are
trivial. To this end, recall that the reflexive sheaf $\det \sE'$ is a Weil
divisorial sheaf on $X'$. In other words, there exists a Weil divisor $D'$ on
$X'$ so that $\det \sE' \cong \sO_{X'}(D')$. Since $X'$ is $\bQ$-factorial,
there exists be a number $m \in \bN^+$ such that $mD'$ is actually Cartier.

As a first step towards showing triviality of a sufficiently high multiple, we
prove numerical triviality of $D'$.

\begin{lem}
  Setting as above, then the $\bQ$-Cartier divisor $D'$ is numerically trivial.
\end{lem}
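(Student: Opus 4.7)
The plan is to apply the numerical triviality criterion Proposition~\ref{prop:angela} to $-D'$, after first establishing two properties of this divisor: that $-D'$ is pseudoeffective, and that $-D'\cdot \phi^*(H_1)\cdots\phi^*(H_{n-1})=0$.

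For pseudoeffectivity of $-D'$, set $r := \rank \sE = \rank \sE'$. The saturated inclusion $\sE' \hookrightarrow \sT_{X'}$ induces, after taking $r$-th exterior powers, reflexive hulls, and dualising, a non-trivial sheaf morphism
$$
\Omega^{[r]}_{X'} \longrightarrow (\det \sE')^{*} \cong \sO_{X'}(-D').
$$
Since $X$ has canonical singularities with $K_X$ numerically trivial, it is not uniruled; as $\phi$ is birational, neither is $X'$. The variety $X'$ is also $\bQ$-factorial by construction of the $\bQ$-factorialisation, so Proposition~\ref{prop:pseudoeffectivitygeneralised} applies and shows that $-D'$ is pseudoeffective.

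For the intersection-theoretic vanishing, recall from Lemma~\ref{lem:slopeZero} that $\mu_h(\sE) = 0$, that is, $\det\sE \cdot H_1\cdots H_{n-1} = 0$ on $X$. Because $\phi$ is small, the exceptional set contains no divisorial components; hence $\sE' = \phi^{[*]}\sE$ agrees with $\phi^{*}\sE$ outside a set of codimension at least two, so $D'$ is the strict (equivalently total) transform under $\phi$ of a Weil divisor $D_0$ with $\sO_X(D_0) \cong \det\sE$. The projection formula then yields
$$
-D' \cdot \phi^*(H_1)\cdots\phi^*(H_{n-1}) \;=\; -D_0 \cdot H_1\cdots H_{n-1} \;=\; 0.
$$
Together with the pseudoeffectivity established above, Proposition~\ref{prop:angela} (applied to the pseudoeffective divisor $-D'$ on the $\bQ$-factorial variety $X'$) forces $-D'$, and hence $D'$, to be numerically trivial.

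The main obstacle, if any, is essentially bookkeeping: one must verify cleanly that the reflexive operations and the smallness of $\phi$ combine correctly so that the intersection number $D' \cdot \phi^*(H_1)\cdots\phi^*(H_{n-1})$ on $X'$ coincides with $\det \sE \cdot H_1\cdots H_{n-1}$ on $X$. Once the pseudoeffectivity input from Proposition~\ref{prop:pseudoeffectivitygeneralised} and the triviality criterion Proposition~\ref{prop:angela} are in place, the argument is essentially immediate.
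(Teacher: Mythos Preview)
Your proof is correct and follows essentially the same route as the paper's: both establish pseudoeffectivity of $-D'$ via the morphism $\Omega^{[r]}_{X'} \to \sO_{X'}(-D')$ and Proposition~\ref{prop:pseudoeffectivitygeneralised}, compute the vanishing intersection number from $\mu_h(\sE)=0$ and the smallness of $\phi$, and then invoke Proposition~\ref{prop:angela}. The only differences are cosmetic (ordering of the two ingredients, and your explicit mention of the projection formula where the paper simply cites agreement outside the exceptional set).
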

\begin{proof}
  We aim to apply Proposition~\ref{prop:angela} in order to conclude that $D'$
  is numerically trivial. As a first step in this direction, recall from
  Lemma~\ref{lem:slopeZero} that $\mu_h(\sE) \cdot h = 0$. Consequently, we have
  the following equality of intersection numbers of $\bQ$-Cartier divisors,
  \begin{equation}\label{eq:intersectH}
    0= D' \cdot \phi^*(H_1) \cdots \phi^*(H_{n-1}) = -D' \cdot \phi^*(H_1) \cdots \phi^*(H_{n-1}).
  \end{equation}
  Secondly, observe that the inclusion $\sE' \into \sT_{X'}$ yields an inclusion
  $\det \sE' \into \wedge^{[p]} \sT_{X'}$.  Dualising, we obtain a non-zero morphism
  $$
  \Omega^{[p]}_{X'} \to (\det \sE')^* \cong \sO_{X'}(-D').
  $$
  Since $K_X$ is trivial and since $X$ has only canonical singularities, $X$ is
  not uniruled and Proposition~\ref{prop:pseudoeffectivitygeneralised} therefore
  implies that the $\bQ$-Cartier divisor $-D'$ is pseudoeffective on $X'$. Due
  to Equation~\eqref{eq:intersectH}, Proposition~\ref{prop:angela} applies to
  show that $-D'$ and hence $D'$ is numerically trivial indeed.
\end{proof}

Next up, we conclude from numerical triviality that high reflexive powers of
$\det \sE$ and $\det \sE'$ are trivial.

\begin{cor}
  Setting as above. Then there exists a number $M \in \bN^+$ such that $(\det
  \sE')^{[M]} \cong \sO_{X'}$ and $(\det \sE)^{[M]} \cong \sO_X$.
\end{cor}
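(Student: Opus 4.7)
The plan is to reduce, via a resolution of singularities, to the standard fact that a numerically trivial line bundle on a smooth projective variety with vanishing irregularity is torsion.

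First, $\bQ$-factoriality of $X'$ ensures that $mD'$ is Cartier, so $\sO_{X'}(mD')$ is an honest line bundle. Since $D'$ was just shown to be numerically trivial, so is $\sO_{X'}(mD')$. I would then choose any resolution $\pi: \wtilde X' \to X'$ and pull back $\sO_{X'}(mD')$ to obtain a numerically trivial line bundle $\sL$ on the smooth projective variety $\wtilde X'$.

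The key intermediate step is to establish $h^1(\wtilde X',\, \sO_{\wtilde X'}) = 0$. Since the identity morphism is itself a finite cover étale in codimension one, the hypothesis $\wtilde q(X) = 0$ forces $q(X) = 0$. As $\phi$ is small birational between varieties with canonical (hence rational) singularities, $H^1(X',\, \sO_{X'}) = H^1(X,\, \sO_X) = 0$. Applying the rational singularities property to $\pi$ (i.e.\ $R^i \pi_* \sO_{\wtilde X'} = 0$ for $i > 0$) then yields $h^1(\wtilde X',\, \sO_{\wtilde X'}) = 0$.

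With this in hand, $\mathrm{Pic}^0(\wtilde X')$ is trivial, so the group $\mathrm{Pic}^\tau(\wtilde X')$ of numerically trivial line bundles coincides with the torsion subgroup of the finitely generated group $\mathrm{NS}(\wtilde X')$ and is therefore finite. Consequently $\sL^{\otimes N} \cong \sO_{\wtilde X'}$ for some $N \in \bN^+$, and the projection formula together with $\pi_* \sO_{\wtilde X'} = \sO_{X'}$ descends this isomorphism to $\sO_{X'}(NmD') \cong \sO_{X'}$. Setting $M := Nm$ gives $(\det \sE')^{[M]} \cong \sO_{X'}$. To conclude, since $\phi$ is small the reflexive divisorial sheaves $\det \sE$ and $\phi_*\det \sE'$ agree on the big open set where $\phi$ is an isomorphism, hence everywhere; pushing $\sO_{X'}(MD') \cong \sO_{X'}$ forward by $\phi$ yields $(\det \sE)^{[M]} \cong \sO_X$.

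The only step requiring genuine input is the cohomological vanishing $h^1(\wtilde X',\, \sO_{\wtilde X'}) = 0$; once this is in place, the remainder is a routine chase combining the projection formula with finiteness of torsion in the Néron-Severi group. The other mild subtlety is checking that descending from $\wtilde X'$ to $X'$ to $X$ does not introduce any issues with the reflexive powers of $\det\sE'$ and $\det\sE$, but this is controlled by the fact that $mD'$ is Cartier on $X'$ and that $\phi$ is small.
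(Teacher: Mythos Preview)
Your argument is correct and follows essentially the same strategy as the paper: use $\wtilde q(X)=0$ to force $q(X')=0$, conclude that the group of numerically trivial line bundles is finite, deduce that $(\det\sE')^{[M]}\cong\sO_{X'}$, and then push forward along the small map $\phi$ to get the statement on $X$. The only difference is that you take an unnecessary detour through a resolution $\pi:\wtilde X'\to X'$ before invoking finiteness of $\Pic^\tau$; the paper simply observes that $h^1(X',\sO_{X'})=0$ already makes $\Pic(X')$ discrete, so the numerically trivial line bundle $\sO_{X'}(mD')$ is torsion directly on $X'$, with no need to ascend and descend.
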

\begin{proof}
  We are going to use the assumption that $\wtilde q(X) = 0$, which implies that
  $$
  0 = q(X') = h^1\bigl( X',\, \sO_{X'} \bigr),
  $$
  hence the Picard group of $X'$ is discrete. The subgroup $\Pic^0(X')
  \subsetneq \Pic(X')$ of invertible sheaves with numerically trivial Chern
  class is therefore finite. It follows that there exists a positive natural
  number $k$ such that $(\det \sE')^{[km]} \cong \sO_{X'}$. Since the reflexive
  sheaves $\sO_X \cong \phi_* \bigl( (\det \sE')^{[km]} \bigr)$ and $(\det
  \sE)^{[km]}$ agree in codimension one, they agree everywhere, and $(\det
  \sE)^{[km]}$ is likewise trivial.
\end{proof}

\subsubsection{Step~4: Constructing the splitting on a cover of $X'$}
\label{sssec:S4}

Since $\det \sE'$ is torsion, there exists an index-one cover $g: \wtilde X' \to
X'$ for $\det \sE'$, see for example~\cite[2.52]{KM98}. This is a finite
morphism from a normal variety, étale in codimension one, such that
\begin{equation}\label{eq:detf*trivial}
  g^{[*]} \, \det \sE' \cong \sO_{\wtilde X'}.
\end{equation}
Recall from \cite[5.20]{KM98} that $\wtilde X'$ has trivial canonical bundle and
at worst canonical singularities. Set $\wtilde \sE' := g^{[*]}\sE'$.  We aim to
construct a splitting of $\sT_{\wtilde X'}$ using the existence of complementary
subsheaves shown in Corollary~\ref{cor:split} on page~\pageref{cor:split}. The
following claim guarantees that the assumptions made in
Corollary~\ref{cor:split} are satisfied in our context.

\begin{claim}\label{claim:XXZ}
  The inclusion $\sE' \into \sT_{X'}$ induces an inclusion $\wtilde \sE' \into
  \sT_{\wtilde X'}$. With the inclusion understood, $\wtilde \sE'$ is a
  saturated subsheaf of $\sT_{\wtilde X'}$
\end{claim}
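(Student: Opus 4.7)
The plan is to first construct the inclusion $\wtilde \sE' \hookrightarrow \sT_{\wtilde X'}$ by working on a ``big'' open subset where everything is controlled, and then to show saturation by a reflexive-extension argument. Let $U \subseteq X'$ denote the open subset over which $g$ is étale; by hypothesis $U$ is big, i.e., $\codim_{X'}(X' \setminus U) \geq 2$. Set $V := g^{-1}(U) \cap \wtilde X'_{\reg} \cap g^{-1}(X'_{\reg})$. Since $g$ is finite and étale in codimension one, and since $\wtilde X'$ is normal, the complement $\wtilde X' \setminus V$ has codimension at least two in $\wtilde X'$.

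On the open set $V$, the morphism $g|_V$ is étale onto its image, so $g^*\sT_{X'}|_V \cong \sT_{\wtilde X'}|_V$, and pulling back the injection $\sE' \hookrightarrow \sT_{X'}$ by the flat morphism $g|_V$ yields an injection $g^*\sE'|_V \hookrightarrow \sT_{\wtilde X'}|_V$. Since the pullback of a reflexive sheaf under an étale morphism between smooth spaces is again reflexive, we have $g^*\sE'|_V = (g^*\sE')^{**}|_V = \wtilde \sE'|_V$. Denoting by $j : V \hookrightarrow \wtilde X'$ the inclusion, and using that $\wtilde \sE'$ and $\sT_{\wtilde X'}$ are both reflexive while $\wtilde X' \setminus V$ is small, an application of $j_*$ to the inclusion on $V$ yields the desired injection $\wtilde \sE' \cong j_*(\wtilde \sE'|_V) \hookrightarrow j_*(\sT_{\wtilde X'}|_V) \cong \sT_{\wtilde X'}$.

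For saturation, I would denote by $\sE'' \subseteq \sT_{\wtilde X'}$ the saturation of $\wtilde \sE'$ and aim to show $\sE'' = \wtilde \sE'$. The key local computation is on $V$: there $\sT_{\wtilde X'}/\wtilde \sE'$ is the étale pullback of the torsion-free sheaf $\sT_{X'}/\sE'$ (torsion-freeness of the latter uses saturation of $\sE'$ in $\sT_{X'}$, which holds since $\phi$ is small), and is therefore itself torsion-free. Consequently $\sE''|_V = \wtilde \sE'|_V$. Since $\sE''$ is a subsheaf of the torsion-free sheaf $\sT_{\wtilde X'}$, it injects into $j_*(j^*\sE'') = j_*(\wtilde \sE'|_V)$; but the latter is just $\wtilde \sE'$ by reflexivity, so $\sE'' \subseteq \wtilde \sE'$. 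Combined with the tautological inclusion $\wtilde \sE' \subseteq \sE''$, this forces equality.

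The main technical obstacle is being careful about what exactly happens along the codimension-one branch locus: although $g$ is étale in codimension one, the ``étale locus'' statement only identifies $g^*\sE'$ with its double dual on $V$, and one must ensure that the comparison sheaves involved in the $j_*$ extension are genuinely reflexive so that sections extend uniquely across the small locus $\wtilde X' \setminus V$. Once this is done, both claims reduce to the standard slogan that reflexive sheaves are determined by their restriction to any big open subset.
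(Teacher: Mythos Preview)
Your proof is correct and follows essentially the same approach as the paper's own argument: work on the big open set where $g$ is étale to get both the inclusion and saturation, then extend across the small complement using reflexivity. The paper's version is a two-sentence sketch (``$\wtilde\sE'$ injects into $\sT_{\wtilde X'}$ and is saturated outside a small set; since $\wtilde\sE'$ and its saturation are reflexive and agree in codimension one, they coincide''), and you have carefully unpacked each step, including making explicit why $\sE'$ is saturated in $\sT_{X'}$ (because $\phi$ is small) and why the torsion-freeness of the quotient survives étale pullback.
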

\begin{proof}
  Since $\phi \circ g: \wtilde X' \to X$ is étale in codimension one, the
  reflexive sheaf $\wtilde \sE'$ injects into $\sT_{\wtilde X'}$ outside of a
  small set, and is a saturated subsheaf there. Since both $\wtilde \sE'$ and
  its saturation in $\sT_{\wtilde X'}$ are reflexive and agree in codimension
  one, it follows that $\wtilde \sE'$ actually is isomorphic to its saturation,
  as claimed.
\end{proof}

With Claim~\ref{claim:XXZ} in place, Corollary~\ref{cor:split} asserts the
existence of a sheaf $\wtilde \sF'$ with trivial determinant such that
$\sT_{\wtilde X'} \cong \wtilde \sE' \oplus \wtilde \sF'$. As we have seen in
Claim~\ref{claim:coverSuffices}, this concludes the proof of
Theorem~\ref{deco}. \qed

\subsection{Integrability of direct summands}

In this section, we show that the individual summands in the decomposition
stated in Corollary~\ref{cor:deco} are integrable; that is, they define
foliations.

\begin{thm}[Integrability of direct summands]\label{thm:integrable}
  Let $X$ be a normal projective variety with at worst canonical
  singularities. Assume that $\omega_X \cong \sO_X$. Let $\sT_X \cong \bigoplus
  \sE_i$ be a decomposition into reflexive sheaves with trivial determinants.
  Then all $\sE_i$ are integrable.
\end{thm}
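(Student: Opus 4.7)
The plan is to argue by contradiction using the Extension Theorem~\ref{thm:ext} combined with the standard Hodge-theoretic fact that every holomorphic form on a compact Kähler manifold is $d$-closed. The strategy is to convert a hypothetical non-zero O'Neill tensor into a non-vanishing component of $d\beta_j$, where $\beta_j$ trivialises $\det \sE_j^*$, and then to note that this component must in fact vanish because $\beta_j$ lifts to a closed holomorphic form on any resolution.

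Suppose that $\sE_i$ is not integrable for some index $i$, and set $r_k := \rank \sE_k$. The failure of integrability is measured by the $\sO_X$-linear O'Neill tensor
$$
A_i \colon \wedge^2 \sE_i \longrightarrow \sT_X/\sE_i \,\cong\, \bigoplus_{j\neq i} \sE_j, \qquad v \wedge w \mapsto \text{class of } [v, w].
$$
Its $\sO_X$-linearity follows from $[fv, w] = f\,[v, w] - w(f)\,v \equiv f\,[v,w] \pmod{\sE_i}$ for local sections $v, w$ of $\sE_i$. By hypothesis there exists $j \neq i$ for which the component $A_{ij} \colon \wedge^2 \sE_i \to \sE_j$ is non-zero.

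Next, I would realise $A_{ij}$ as a summand of the exterior derivative of a trivialising section of $\det \sE_j^*$. Pick a nowhere-vanishing section $\beta_j \in H^0\bigl(X,\, \wedge^{r_j}\sE_j^*\bigr)$, and view it as a reflexive $r_j$-form via $\wedge^{r_j}\sE_j^* \hookrightarrow \Omega^{[r_j]}_X$ induced by $\Omega^{[1]}_X \cong \bigoplus_k \sE_k^*$. Dualising $A_{ij}$ and using the trivial determinants, we obtain a non-zero tensor
$$
\eta \,\in\, H^0\bigl(X,\, \wedge^{r_j - 1}\sE_j^* \otimes \wedge^2 \sE_i^*\bigr) \,\subseteq\, H^0\bigl(X,\, \Omega^{[r_j + 1]}_X\bigr),
$$
obtained from the identifications $\sE_j \cong \wedge^{r_j - 1}\sE_j^*$ (from $\det \sE_j \cong \sO_X$). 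A local computation with a frame $\beta_1, \ldots, \beta_{r_j}$ of $\sE_j^*$ for which $\beta_j = \beta_1 \wedge \cdots \wedge \beta_{r_j}$, expanding $d\beta_j = \sum_l (-1)^{l-1}\beta_1 \wedge \cdots \wedge d\beta_l \wedge \cdots \wedge \beta_{r_j}$ and keeping only the $\wedge^2 \sE_i^*$-part of each $d\beta_l$, identifies $\eta$ up to a non-zero sign with the $\wedge^2 \sE_i^* \otimes \wedge^{r_j - 1}\sE_j^*$-component of $d\beta_j$. In particular, $d\beta_j \neq 0$.

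The decisive step is to contradict this by showing $d\beta_j = 0$. Let $\pi \colon \wtilde X \to X$ be a resolution of singularities with $\wtilde X$ smooth and projective, hence compact Kähler. Because $X$ has canonical, and therefore klt, singularities, the Extension Theorem~\ref{thm:ext} gives $\pi_* \Omega^{r_j}_{\wtilde X} = \Omega^{[r_j]}_X$, so that $\beta_j$ lifts to a global holomorphic $r_j$-form $\wtilde\beta_j$ on $\wtilde X$. Standard Hodge theory then forces $d\wtilde \beta_j = 0$: the form $\wtilde\beta_j$ is $\bar\partial$-closed (being holomorphic) and $\bar\partial^*$-closed for type reasons, hence $\bar\partial$-harmonic, hence $d$-harmonic by the Kähler identity $\Delta_d = 2\Delta_{\bar\partial}$, hence $d$-closed. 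Restricting to the common open subset $X_{\reg} \subset \wtilde X$ — on which $\pi$ is an isomorphism and both exterior derivatives agree tautologically — we conclude $d\beta_j |_{X_{\reg}} = 0$; by reflexivity of $\Omega^{[r_j+1]}_X$ this gives $d\beta_j = 0$ globally, contradicting the previous paragraph. The only mildly delicate point is to verify that the reflexive exterior derivative on $X$ matches the smooth exterior derivative on $\wtilde X$ under the extension-theorem identification, and this is automatic because $X \setminus X_{\reg}$ has codimension at least two by normality.
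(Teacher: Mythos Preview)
Your argument is correct and runs parallel to the paper's, with one genuine difference in the concluding step. Both proofs fix an index and consider the nowhere-vanishing reflexive $r_j$-form coming from a trivialisation of $\det\sE_j^*$ (the paper calls it $\theta$, you call it $\beta_j$), and both invoke the Extension Theorem~\ref{thm:ext} to lift this form to a holomorphic form on a resolution $\wtilde X$. At that point the paper identifies the complementary summand as the degeneracy sheaf $S_{\wtilde\theta}$ and cites Demailly's Frobenius integrability criterion \cite[Main~Thm.]{Dem02} as a black box. You instead observe directly that a holomorphic form on the compact K\"ahler manifold $\wtilde X$ is $d$-closed, and then carry out the elementary local computation showing that the $\wedge^2\sE_i^*\otimes\wedge^{r_j-1}\sE_j^*$-component of $d\beta_j$ is (up to sign) the O'Neill tensor $A_{ij}$; I checked this via the Cartan formula for $d\beta_j(V_0,\ldots,V_{r_j})$ with $V_0,V_1\in\sE_i$ and $V_2,\ldots,V_{r_j}\in\sE_j$, and it comes out as claimed. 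Your route is therefore a self-contained inlining of Demailly's argument in the special case of a trivial twisting line bundle, where his pseudo-effectivity hypothesis collapses precisely to $d$-closedness of holomorphic forms. What the paper gains by citing \cite{Dem02} is brevity and a conceptual pointer to a more general phenomenon; what you gain is transparency and independence from an external reference. One cosmetic point: you reuse the symbol $\beta_l$ both for the global form $\beta_j$ and for the local frame elements of $\sE_j^*$, which is momentarily confusing and worth renaming.
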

\begin{proof}
  We follow the arguments of \cite{Hoe07}. Without loss of generality we assume
  that $\sT_X \cong\sE_1 \oplus \sE_2$, that is, we assume that $\sT_X$ can be
  decomposed into two summands. We will show that $\sE_2$ is integrable. The
  integrability of $\sE_1$ then follows for symmetry reasons. Let $r_1$ be the
  rank of $\sE_1$, and consider the trivialisable sheaf $\sL_1 := \det \sE_1$.
  Since $\sE_1^*$ is a direct summand of $\Omega_X^1$, the reflexive sheaf
  $\sL_1 \otimes \Omega_X^{[r_1]}$ has a trivial direct summand. Let $\theta \in
  H^0\bigl(X, \, \sL_1\otimes \Omega_X^{[r_1]}\bigr)$ be the corresponding
  nowhere vanishing $ \sL_1$-valued differential form and let $\pi: \wtilde X
  \to X$ be a resolution of singularities. By the Extension
  Theorem~\ref{thm:ext}, the reflexive differential form $\theta$ pulls back to
  a non-trivial section $\wtilde \theta \in H^0\bigl(\wtilde X, \, \pi^*(\sL_1)
  \otimes \Omega_{\wtilde X}^{r_1}\bigr)$.

  At general points of $\wtilde X$, where $\pi$ is isomorphic, the sheaf $\pi^*
  \sE_2$ coincides with the degeneracy sheaf $S_{\wtilde \theta}$ of $\wtilde
  \theta$, that is, the sheaf of vector fields $\vec v$ such that the
  contraction
  $$
  i_{\vec v}(\wtilde \theta) = \wtilde \theta ( \vec v, \cdot ) \in H^0 \bigl(
  \wtilde X,\, \pi^*(\sL_1) \otimes \Omega^{r_1-1}_{\wtilde X})
  $$
  vanishes\footnote{Degneracy subsheaves are introduced and discussed in more
    detail in Section~\ref{subsect:nondeg} below.}. In this setting, it follows
  from \cite[Main~Thm.]{Dem02} that $S_{\wtilde \theta}$ is integrable. As a
  consequence, we obtain that $\sE_2$ is integrable at general points of
  $X$. Since $\sE_2$ is a saturated subsheaf of $\sT_X$, it follows that it is
  integrable everywhere.
\end{proof}

\subsection{Proof of Theorem~\ref*{decoII}}\label{sect:proofofdecoII}

We maintain notation and assumptions of Theorem~\ref{decoII}.
Corollary~\ref{cor:qgleichnull} implies the existence of an Abelian variety $A$
and of a projective variety $X'$ with at worst canonical singularities, with
trivial canonical bundle and $\wtilde q (X') = 0$, together with a finite cover
$A \times X' \to X$, étale in codimension one. Property
(\ref{decoII}.\ref{il:C}) stated in Theorem~\ref{decoII} is hence fulfilled for
any cover of the form $A \times \widetilde X \to A \times X'$, where $\wtilde X
\to X'$ is a finite cover, étale in codimension one. The existence of such a
cover $\wtilde X \to X$ and of a decomposition of $\sT_{\widetilde X}$
satisfying Properties (\ref{decoII}.\ref{il:A}) and (\ref{decoII}.\ref{il:B})
follows by combining Corollary~\ref{cor:deco} and
Theorem~\ref{thm:integrable}. In summary, this finishes the proof of
Theorem~\ref{decoII}. \qed

\begin{rem}
  The decomposition theorem of Beauville-Bogomolov holds for compact Kähler
  manifolds.  Therefore, we should expect a singular version in the
  non-algebraic context as well.  In particular, Theorem~\ref{decoII} should
  hold for Kähler varieties. There are however two main ingredients in our
  argument which are not yet available in the Kähler context: the Extension
  Theorem~\ref{thm:ext} and the pseudoeffectivity result
  Proposition~\ref{prop:pseudoeffectivitygeneralised}.
\end{rem}

\section{Towards a structure theory}
\label{subsect:fromtangentbundletoX}

If $X$ is any projective manifold with Kodaira dimension zero, $\kappa(X)=0$,
standard conjectures of minimal model theory predict the existence of a
birational contraction\footnote{Following standard use, we call a birational map
  a \emph{contraction map} if its inverse does not contract any divisors.} map
$\lambda : X \dasharrow X_\lambda$, where $X_\lambda$ has terminal singularities
and numerically trivial canonical divisor. Generalising the Beauville-Bogomolov
Decomposition Theorem~\ref{bb}, it is widely expected that $X_\lambda$ admits a
finite cover, étale in codimension one, which can be birationally decomposed
into a product
$$
T \times \prod X_j, 
$$
where $T$ is a torus and the $X_j$ are singular versions of Calabi-Yau manifolds
and irreducible symplectic manifold, which cannot be decomposed further. Such a
decomposition result would clearly be a central pillar to any structure theory
for varieties with Kodaira dimension zero. The main result of the present paper,
Theorem~\ref{decoII}, is a first step in this direction.

Section~\ref{subsect:decomposition} discusses the remaining problems of turning
the decomposition found in the tangent sheaf into a decomposition of the
variety. Section~\ref{subsect:CY} gives a conjectural description of the
irreducible pieces coming out of the decomposition, discussing singular
analogues of Calabi-Yau and irreducible holomorphic-symplectic varieties, and
proving the conjectured description in low dimensions.  Finally, fundamental
groups of varieties with trivial canonical class, which are crucial for our
understanding of this class of varieties, are discussed in the concluding
Section~\ref{subsect:fundamentalgroups}.

\begin{rem}
  Corollary~\ref{cor:qgleichnull} and Theorem~\ref{decoII} allow to restrict our
  attention to varieties with vanishing augmented irregularity. For most of the
  present Section~\ref{subsect:fromtangentbundletoX}, we will therefore only
  consider varieties $X$ with $\wtilde q(X)=0$.
\end{rem}

\subsection{Decomposing varieties with trivial canonical bundle}
\label{subsect:decomposition}

In technically correct terms, the setup of our discussion is now summarised as
follows.

\begin{setup}\label{setup:82}
  Let $X$ be a normal $\bQ$-factorial projective variety with canonical
  singularities such that $K_X$ is torsion and $\wtilde q(X) = 0$. By
  Theorem~\ref{decoII}, there exists a finite cover $f: \wtilde X \to X$, étale
  in codimension one, such that $\omega_X = \sO_X$ and such that there exists a
  decomposition
  $$
  \sT_{\wtilde X} = \bigoplus \sE_i
  $$
  of $\sT_{\wtilde X}$ into strongly stable integrable reflexive subsheaves.
\end{setup}

In view of the desired decomposition of the variety $\wtilde X$, this naturally
leads to the following problems.

\begin{problem}[Algebraicity of leaves]\label{pb:AoL}
  In Setup~\ref{setup:82}, show that the leaves of the foliations $\sE_i$ are
  algebraic, perhaps after passing to another cover.
\end{problem}

\begin{problem}[Decomposition of the variety]\label{pbl} 
  In the setup of Problem~\ref{pb:AoL}, show that the algebraicity of the leaves
  leads to a birational decomposition of $\wtilde X$, perhaps after passing to
  another cover. More precisely, show that there is a birational morphism
  $$
  g : \wtilde X \dasharrow \prod X_i,
  $$
  isomorphic outside of a small set $V \subset \wtilde X$, such that the
  following holds.
  \begin{enumerate}
  \item The varieties $X_j$ are smooth, projective with $\kappa (X_j) = 0$ for
    all $j$.
  \item If $p_j$ denotes the composition of $g$ with $j^{\rm th}$ projection
    $\Pi\, X_i \to X_j$, then $p_j^*(\sT_{Y_j}) = \sE_j$ over $X \setminus V$
    for all $j$.
\end{enumerate}  
\end{problem}

\begin{rem}
  Once it is known that the leaves of $\sE_j$ are algebraic, one easily obtains
  rational maps $X \dasharrow Y_j$ to smooth projective varieties such that
  $\sE_j = \sT_{X/Y_j}$ generically.  The main problem is now to show that the
  equality $\sE_j = \sT_{X/Y_j}$ holds everywhere, and that $\kappa (Y_j) = 0$.
\end{rem} 

A solution to Problem~\ref{pbl} is not the yet desired final outcome of our
decomposition strategy for $X$: since $K_X$ is (numerically) trivial one clearly
aims for a decomposition into varieties with trivial canonical class.  Assuming
that the minimal model program works for varieties of Kodaira dimension zero,
each $X_j$ may be replaced by a minimal model $X'_j$. As a consequence we would
obtain a birational map $g': X \dasharrow \Pi\, X'_i$. If the singularities of
$X'$ are not only canonical but terminal, it follows from \cite{Kawamata08} that
$g'$ is isomorphic in codimension one and decomposes into a finite sequence of
flops. One might hope that each terminal variety with numerically trivial
canonical class decomposes into terminal varieties with trivial canonical class
and strongly stable tangent bundle, after performing a finite cover, étale in
codimension one, and after performing a finite number of flops.

\subsection{Classifying the strongly stable pieces: Calabi-Yau and irreducible holomorphic-symplectic varieties}
\label{subsect:CY}

We start with after a short discussion of the notion of strong stability in
Section~\ref{ssec:ssvs}, showing by way of example that varieties with strongly
stable tangent sheaves are the ``right'' objects when building a structure theory
for spaces of Kodaira dimension zero. The remainder of the present
Section~\ref{subsect:CY} discusses these spaces in detail.

Sections~\ref{subsect:nondeg} and \ref{ssec:aodf} relate stability properties of
the tangent bundle to non-degeneracy of differential forms, and discuss
implications for the exterior algebra of reflexive forms. We apply these results
in the concluding Section~\ref{sect:singularclassification} to show that
singular varieties with strongly stable tangent sheaf are in a very strong sense
natural analogues of Calabi-Yau and irreducible holomorphic-symplectic manifolds,
at least in dimension up to five. There is ample evidence to conjecture that
this description holds in general, for strongly stable varieties of arbitrary
dimension.

\subsubsection{Strong stability versus stability}
\label{ssec:ssvs}

At first sight, it seems tempting to consider varieties with stable tangent
bundle as the building blocks of varieties with semistable tangent sheaf, such
as varieties with trivial canonical bundle. However, the following example shows
that \emph{strong stability} is indeed the correct notion in our setup.

\begin{ex}[A variety with stable, but not strongly stable tangent sheaf]
  Let $Z$ be a projective K3-surface, let $\wtilde X := Z \times Z$ with
  projections $p_1, p_2: \wtilde X \to Z$, and let $\phi \in \Aut_\sO(\wtilde
  X)$ be the automorphism which interchanges the two factors. The quotient $X :=
  \wtilde X / \langle\phi\rangle = \Sym^2(Z)$ is then a projective
  holomorphic-symplectic variety with trivial canonical bundle and rational
  Gorenstein singularities. The quotient map $\pi : \wtilde X \to X$ is finite
  and étale in codimension one. Let $h$ be any ample polarisation on $X$. The
  tangent sheaf $\sT_X$ of $X$ is obviously not strongly stable.
  
  However, we claim that $\sT_X$ is $h$-stable. Indeed, suppose that there
  exists a non-trivial $h$-stable subsheaf $0 \subsetneq \sS \subsetneq \sT_X$
  with slope zero that destabilises $\sT_X$. Then, the reflexive pull-back
  $\wtilde \sS := \pi^{[*]}(\sS)$ is $\pi^*(h)$-polystable, see
  \cite[Lem.~3.2.3]{HL97}, and injects into $\sT_{\wtilde X} = p_1^*(\sT_Z)
  \oplus p_2^*(\sT_Z)$. Since neither of the two sheaves $p_j^*\sT_Z$ is stable
  under the action of $\phi$, clearly $\wtilde \sS$ is not one of these. More is
  true: looking at the maps to the two summands of $\sT_{\wtilde X}$ and using
  that morphisms between stable sheaves with the same slope are either trivial
  or isomorphic, we see that $\sS$ has to be stable of rank two, and isomorphic
  to both $p_1^*\sT_Z$ and $p_2^*\sT_Z$. This is absurd, as restriction to
  $p_1$-fibers shows that the sheaves $p_1^*\sT_Z$ and $p_2^*\sT_Z$ are in fact
  not isomorphic.
\end{ex}

\subsubsection{Non-degeneracy of differential forms}
\label{subsect:nondeg}

If $X$ is a canonical variety with numerically trivial canonical class,
stability of the tangent bundle has strong implications for the geometry of
differential forms $X$. This section is concerned with degeneracy
properties. Conjectural consequences for the structure of the exterior algebra
of forms are discussed in the subsequent Section~\ref{ssec:aodf}.

Non-degeneracy of differential forms will be measured using the following
definition.

\begin{defn}[Contraction of a reflexive form, degeneracy subsheaf]\label{cont}
  Let $X$ be a normal complex variety, let $0 < q \leq \dim X$ be any number and
  $\sigma \in H^0\bigl(X,\, \Omega^{[q]}_X\bigr)$ any reflexive form. The
  \emph{contraction map of $\sigma$} is the unique sheaf morphism
  $$
  i_\sigma: \sT_X \to \Omega^{[q-1]}_X
  $$
  whose restriction to $X_{\reg}$ is given by $\vec u \mapsto \sigma(\vec u,
  \cdot )$. Let $S_{\sigma} := \ker (i_\sigma)$ be the kernel of $i_\sigma$. We
  call $S_{\sigma} \subseteq \sT_X$ the \emph{degeneracy subsheaf} of the
  reflexive form $\sigma$. If $S_{\sigma} = 0$, we say that $\sigma$ is
  \emph{generically non-degenerate}.
\end{defn}

The main result of the present section asserts that in our setup, forms never
degenerate. This can be seen as first evidence for the conjectural
classification of the stable pieces into ``Calabi-Yau'' and
``irreducible holomorphic-symplectic'' which we discuss later in
Section~\ref{sect:singularclassification} below.

\begin{prop}[Non-degeneracy of forms on canonical varieties with stable $\sT_X$]\label{prop:max2}
  Let $X$ be a normal $n$-dimensional projective variety $X$ having at worst
  canonical singularities, $n > 1$. Assume that the canonical divisor $K_X$ is
  numerically trivial, and that the tangent sheaf $\sT_X$ is stable with respect
  to some ample polarisation.  If $\sigma$ is any non-zero reflexive form on
  $X$, then $\sigma$ is generically non-degenerate, in the sense of
  Definition~\ref{cont}.
\end{prop}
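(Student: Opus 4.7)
The plan is to argue by contradiction, combining stability of $\sT_X$ with the general principle that reflexive exterior powers of a semistable sheaf remain semistable in characteristic zero. Suppose $S_\sigma \neq 0$. Since $\sigma \neq 0$, the contraction map $i_\sigma : \sT_X \to \Omega^{[q-1]}_X$ is non-zero, so $S_\sigma$ is a proper subsheaf of $\sT_X$; as the kernel of a morphism into the torsion-free sheaf $\Omega^{[q-1]}_X$ it is automatically saturated. Denoting by $\sI \subseteq \Omega^{[q-1]}_X$ the image of $i_\sigma$, one obtains a short exact sequence
\[
0 \longrightarrow S_\sigma \longrightarrow \sT_X \longrightarrow \sI \longrightarrow 0
\]
of torsion-free sheaves with $r := \rank \sI \geq 1$.

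Next comes a slope comparison. Fix the polarisation $h = (H_1, \ldots, H_{n-1})$ with respect to which $\sT_X$ is stable. Numerical triviality of $K_X$ gives $\mu_h(\sT_X) = 0$, so $h$-stability forces $\mu_h(S_\sigma) < 0$, and additivity of the first Chern class in the sequence above yields $(n-r)\mu_h(S_\sigma) + r\mu_h(\sI) = 0$, hence $\mu_h(\sI) > 0$.

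The central step, and the one I expect to demand the most care, is to show that $\Omega^{[q-1]}_X$ is itself $h$-semistable of slope zero, which is incompatible with $\sI \subseteq \Omega^{[q-1]}_X$ having strictly positive slope. By Proposition~\ref{prop:semistable}, $\sT_X$ and hence $\Omega^{[1]}_X$ is $h$-semistable of slope zero. Since $X$ is not uniruled (as $K_X \equiv 0$ and $X$ has canonical singularities), Theorem~\ref{thm:MR} and its analogue for general torsion-free sheaves apply, so $h$-semistability is detected by restriction to a general complete intersection curve $C \subset X_{\reg}$ cut out by sufficiently high multiples of the $H_i$. On such a curve, $\Omega^{[1]}_X|_C$ is a semistable vector bundle of degree zero, and the classical characteristic-zero fact that exterior powers of semistable bundles on a smooth projective curve remain semistable of the appropriate slope implies the same for $\wedge^{q-1}\bigl(\Omega^{[1]}_X|_C\bigr) = \Omega^{[q-1]}_X|_C$. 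Mehta--Ramanathan then promotes this to $h$-semistability of $\Omega^{[q-1]}_X$ on $X$, yielding the contradiction $\mu_h(\sI) > 0 \leq 0$ and forcing $S_\sigma = 0$.
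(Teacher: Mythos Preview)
Your argument is correct and reaches the same contradiction as the paper, but the route differs in how you bound the slope of the image $\sI \subseteq \Omega^{[q-1]}_X$. You first deduce $\mu_h(\sI) > 0$ from stability of $\sT_X$, and then rule this out by establishing that $\Omega^{[q-1]}_X$ is itself $h$-semistable of slope zero, invoking the characteristic-zero fact that exterior powers of a semistable bundle on a curve stay semistable, transported via Mehta--Ramanathan. The paper instead works entirely with Miyaoka's generic semipositivity on a general complete intersection curve $C$: since $K_X \cdot C = 0$, both $\Omega^{[1]}_X|_C$ and $\sT_X|_C$ are nef, so $\sE|_C$ (a quotient of $\sT_X|_C$) and $\sE^*|_C$ (a quotient of $\wedge^{q-1}\sT_X|_C$) are nef, forcing $\det \sE|_C \equiv 0$ and hence $\mu_h(S_\sigma) = 0$, directly destabilising $\sT_X$. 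Your approach is a touch more conceptual and isolates the key structural fact (semistability of $\Omega^{[q-1]}_X$), while the paper's nefness argument is more hands-on and avoids appealing to the preservation of semistability under exterior powers. One small remark: the clause ``Since $X$ is not uniruled \ldots\ Theorem~\ref{thm:MR} \ldots\ apply'' is a red herring in your line of reasoning, as Mehta--Ramanathan requires no such hypothesis; non-uniruledness enters only through Proposition~\ref{prop:semistable}, which you have already invoked.
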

\begin{proof}
  We argue by contradiction and assume that there exists a reflexive $q$-form
  $\sigma$ whose degeneracy subsheaf does not vanish, $S_{\sigma} \ne
  0$. Consider the exact sequence
  \begin{equation}\label{eq:exactSomega}
    0 \to S_{\sigma} \to \sT_X \xrightarrow{i(\sigma)} \underbrace{\Image
      i(\sigma)}_{=: \sE \,\, \subseteq \,\, \Omega_X^{[q-1]}} \to 0.
  \end{equation}
  Recalling from Proposition~\ref{prop:indepSS} that $\sT_X$ is stable with respect
  to any ample polarisation, we choose an ample Cartier divisor $H$ on $X$, a
  sufficiently large number $m$, and let $(D_j)_{1 \leq j \leq n-1} \in \vert m
  H \vert$ be general elements. Consider the corresponding general complete
  intersection curve $C := D_1 \cap \cdots \cap D_{n-1} \subsetneq X$, which
  avoids the singular locus of $X$.

  Since $K_X$ is torsion, the Kodaira-dimension of $X$ is zero,
  $\kappa(X)=0$. As $X$ has only canonical singularities, this implies that $X$
  is not covered by rational curves.  Miyaoka's Generic Semipositivity
  Theorem~\ref{miyaoka} therefore asserts that the vector bundle $\sT_X|_C \cong
  \bigl(\Omega_X^{[n-1]} \otimes \omega_X^*\bigr)\bigl|_C$ is nef. This has two
  consequences in our setup. On the one hand, since $\sE$ is a quotient of
  $\sT_X$, it follows that $\sE|_C$ and $\det \sE|_C$ are nef. On the other
  hand, since $\sE \subseteq \Omega_X^{[q-1]}$ by definition, its dual
  $\sE^*|_C$ is a quotient of $\wedge^{q-1} \sT_X|_C$, and it follows that
  $\sE^*|_C$ and $\det \sE^*|_C$ are likewise nef.  Consequently, we obtain
  $\det \sE|_C \equiv 0$. The exact Sequence~\eqref{eq:exactSomega} then implies
  that $S_\sigma$ destabilises $\sT_X$. This contradicts the assumed stability
  of $\sT_X$, and finishes the proof of Proposition~\ref{prop:max2}.
\end{proof}

\begin{cor}[Reflexive two-forms on canonical varieties with stable $\sT_X$, I]\label{cor:2formissymplectic-1}
  In the setup of Proposition~\ref{prop:max2}, $h^0 \bigl(X,\, \Omega^{[2]}_X
  \bigr) \leq 1$. 
\end{cor}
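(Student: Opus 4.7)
The plan is to distinguish cases by the parity of $n$ and to combine the generic non-degeneracy result of Proposition~\ref{prop:max2} with the linear-algebraic fact that an alternating $2$-form $\sigma$ on a vector space of dimension $d$ is non-degenerate if and only if $d = 2k$ is even and the top wedge power $\sigma^k$ is non-zero. The odd-dimensional and two-dimensional cases dispose of themselves quickly, while the heart of the argument is a pencil construction in even dimension $n = 2k \geq 4$.

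If $n$ is odd, then at every smooth point $x \in X$ the alternating form $\sigma_x$ on $T_x X$ has non-trivial radical, so for any non-zero reflexive $2$-form $\sigma$ the contraction $i_\sigma : \sT_X \to \Omega^{[1]}_X$ has non-zero kernel at every point of $X_{\reg}$, forcing $S_\sigma \neq 0$. Proposition~\ref{prop:max2} then implies $\sigma = 0$, and so $h^0\bigl(X,\, \Omega^{[2]}_X\bigr) = 0$. If $n = 2$, the statement follows at once from the identification $\Omega^{[2]}_X = \omega_X$ together with Proposition~\ref{Kaw}, which ensures that $K_X$ is torsion and therefore that $h^0\bigl(X,\, \omega_X\bigr) \leq 1$.

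Assume now $n = 2k$ with $k \geq 2$, and suppose for contradiction that there exist two linearly independent reflexive $2$-forms $\sigma_1, \sigma_2 \in H^0\bigl(X,\, \Omega^{[2]}_X\bigr)$. Proposition~\ref{prop:max2} guarantees that each $\sigma_i$ is generically non-degenerate, so by the linear-algebra equivalence recalled above, both $\sigma_i^k \in H^0\bigl(X,\, \omega_X\bigr)$ are non-zero. Since $K_X$ is torsion, we have $h^0\bigl(X,\, \omega_X\bigr) \leq 1$, whence $\sigma_1^k$ and $\sigma_2^k$ are proportional; after rescaling $\sigma_2$ we may assume $\sigma_1^k = \sigma_2^k$. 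We then form the polynomial
$$
p(t) := \bigl( \sigma_1 + t \cdot \sigma_2 \bigr)^k \in H^0\bigl(X,\, \omega_X\bigr) \cong \bC, \qquad t \in \bC,
$$
which has degree exactly $k \geq 2$ in $t$, with non-zero constant term $\sigma_1^k$ and non-zero leading coefficient $\sigma_2^k$. Pick any root $t_0 \in \bC$ of $p$; by $p(0) \neq 0$, it is automatically non-zero. Linear independence of $\sigma_1$ and $\sigma_2$ forces $\sigma_{t_0} := \sigma_1 + t_0 \sigma_2$ to be non-zero, but the vanishing $\sigma_{t_0}^k = p(t_0) = 0$ forces $\sigma_{t_0}$ to be pointwise degenerate on $X_{\reg}$, and hence $S_{\sigma_{t_0}} \neq 0$, contradicting Proposition~\ref{prop:max2}.

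The only genuine subtlety is the linear-algebra translation between generic non-degeneracy of a $2$-form and non-vanishing of its top exterior power; once this is in place, the pencil argument converts the hypothesis $h^0 \geq 2$ into the existence of a non-zero but everywhere degenerate reflexive $2$-form, which Proposition~\ref{prop:max2} immediately excludes. No significant analytic or geometric obstacle is anticipated.
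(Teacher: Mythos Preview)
Your proof is correct and takes a genuinely different route from the paper's. The paper argues that two linearly independent generically non-degenerate $2$-forms $\sigma_1,\sigma_2$ give two isomorphisms $\phi_i=i_{\sigma_i}\colon \sT_X\to\Omega^{[1]}_X$, so that $\phi_1^{-1}\circ\phi_2$ is a non-scalar endomorphism of $\sT_X$, contradicting the fact that a stable sheaf is simple \cite[Cor.~1.2.8]{HL97}. Your approach instead exploits the equivalence ``$S_\sigma=0 \Leftrightarrow \sigma^k\neq 0$'' and uses the pencil $t\mapsto(\sigma_1+t\sigma_2)^k\in H^0(X,\omega_X)\cong\bC$ to locate a non-zero but everywhere degenerate reflexive $2$-form, feeding directly back into Proposition~\ref{prop:max2}. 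Your argument is more elementary in that it avoids the appeal to simplicity of stable sheaves and does not need to verify that the injections $i_{\sigma_i}$ are actually isomorphisms (which in the paper implicitly uses that $\omega_X$ becomes trivial once it admits a non-zero section); the price you pay is the case distinction on parity and the separate treatment of $n=2$. Both proofs ultimately rest on the same input, Proposition~\ref{prop:max2}, and on $K_X$ being torsion.
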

\begin{proof}
  We argue by contradiction and assume that there are two linearly independent
  forms $\sigma_1, \sigma_2 \in H^0\bigl(\wtilde X,\, \Omega^{[2]}_{\wtilde
    X}\bigr)$. Since both forms are non-degenerate by
  Proposition~\ref{prop:max2}, they induce linearly independent isomorphisms
  $\phi_\bullet : \sT_{\wtilde X} \to \Omega^{[1]}_{\wtilde X}$. The composition
  $\phi_1^{-1} \circ \phi_2$ is thus a non-trivial automorphism of $\sT_{\wtilde
    X}$. We obtain that the stable sheaf $\sT_{\wtilde X}$ is not simple,
  contradicting \cite[Cor.~1.2.8]{HL97} and thereby finishing the proof of
  Corollary~\ref{cor:2formissymplectic-1}.
\end{proof}

\begin{cor}[Reflexive two-forms on canonical varieties with stable $\sT_X$, II]\label{cor:2formissymplectic-2}
  In the setup of Proposition~\ref{prop:max2}, if there exists a non-trivial
  reflexive two-form $\sigma \in H^0 \bigl(X,\, \Omega^{[2]}_X \bigr)$, then
  $\sigma$ is a complex-symplectic form on the smooth part of $X$. In
  particular, $\dim X$ is even, $\omega_X$ is trivial, and $X$ has only rational
  Gorenstein singularities. 
\end{cor}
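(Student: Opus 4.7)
The plan is to extract all of the claims from the single fact, guaranteed by Proposition~\ref{prop:max2}, that the degeneracy sheaf $S_\sigma$ vanishes, i.e., the contraction $i_\sigma : \sT_X \to \Omega^{[1]}_X$ is injective. At a general smooth point $p \in X_{\reg}$ this forces the skew-symmetric bilinear form $\sigma_p$ on $T_{X,p}$ to be non-degenerate. Since any skew-symmetric form on an odd-dimensional vector space has non-trivial kernel, I immediately conclude that $n$ is even; write $n = 2m$.

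Next, I would consider the reflexive top form $\sigma^{\wedge m} \in H^0\bigl(X,\, \Omega^{[n]}_X\bigr) = H^0(X,\,\omega_X)$. Since $\sigma$ is non-degenerate on a Zariski-open subset of $X_{\reg}$, the Pfaffian relation implies that $\sigma^{\wedge m}$ is not identically zero. Thus $\omega_X \cong \sO_X(D)$, where $D$ is the (effective) zero divisor of $\sigma^{\wedge m}$. Because $X$ has canonical singularities, $\omega_X$, and hence $D$, is $\bQ$-Cartier; and since $K_X \equiv 0$, the divisor $D$ is numerically trivial. Intersecting with $n-1$ general ample sections shows that an effective numerically trivial $\bQ$-Cartier divisor on a projective variety must vanish, so $D = 0$ and $\omega_X \cong \sO_X$. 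In particular $\omega_X$ is invertible, so $X$ is Gorenstein; canonical singularities are rational by \cite[Thm.~5.22]{KM98}, giving the rational Gorenstein property.

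Finally, since $X$ is projective and irreducible, $h^0(X,\omega_X) = h^0(X,\sO_X) = 1$, so $\sigma^{\wedge m}$ is, up to scalar, the unique non-zero section of $\omega_X$, and is therefore nowhere vanishing. This upgrades the generic non-degeneracy of $\sigma$ to non-degeneracy at every point of $X_{\reg}$. Closedness of $\sigma$ on $X_{\reg}$ follows by choosing any resolution $\pi : \wtilde X \to X$: the Extension Theorem~\ref{thm:ext} applied with $p = 2$ lifts $\sigma$ to a global holomorphic $2$-form $\wtilde\sigma$ on the projective manifold $\wtilde X$, which is closed by Hodge theory, so $d\sigma|_{X_{\reg}} = 0$. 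Thus $\sigma$ is a closed, everywhere non-degenerate holomorphic $2$-form on $X_{\reg}$, i.e., a complex-symplectic form.

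No step presents a serious obstacle; the real work has already been done in Proposition~\ref{prop:max2}. The one point worth treating carefully is the passage ``$\omega_X$ has a section and $K_X \equiv 0$'' $\Rightarrow$ ``$\omega_X \cong \sO_X$'', where the $\bQ$-Cartier property inherited from $K_X$ is essential in order to apply the intersection-theoretic vanishing argument.
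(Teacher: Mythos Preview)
Your proof is correct and follows essentially the same route as the paper: deduce generic non-degeneracy from $S_\sigma = 0$, hence $n = 2m$, then use the section $\sigma^{\wedge m} \in H^0(X,\omega_X)$ together with $K_X \equiv 0$ to rule out any zeros and conclude $\omega_X \cong \sO_X$ and everywhere non-degeneracy. You additionally supply the closedness argument via the Extension Theorem and Hodge theory on a resolution, which the paper's proof leaves implicit.
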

\begin{proof}
  Proposition~\ref{prop:max2} implies that the non-degeneracy subsheaf
  $S_{\sigma}$ vanishes. For general points $x \in X_{\reg}$, this implies that
  $\sigma|_x$ is a non-degenerate, and hence symplectic, form on the vector
  space $T_X|_x$. This already shows that the dimension of $X$ is even, say
  $\dim X = 2k$. If $\tau \in H^0 \bigl( X,\, \omega_X \bigr)$ is the section
  induced by $\wedge^k \sigma$, then $\tau$ does not vanish at $x$.

  To prove that $\sigma$ is a complex-symplectic form on the smooth part of $X$,
  we need to show that non-degeneracy holds at arbitrary points of
  $X_{\reg}$. If not, there exists a point $y \in X_{\reg}$ such that
  $\sigma|_y$ is a degenerate 2-form on the vector space $T_X|_y$. The form
  $\tau$ will therefore vanish at $y$, showing that $K_X$ can be represented by
  a non-trivial, effective $\bQ$-Cartier divisor, contradicting the assumption
  that $K_X$ is numerically trivial.

  The remaining assertions of Corollary~\ref{cor:2formissymplectic-2} follow
  immediately.
\end{proof}

\subsubsection{Exterior algebras of differential forms on the strongly stable pieces}
\label{ssec:aodf}

The algebra of differential forms on irreducible holomorphic-symplectic and
Calabi-Yau manifolds has a rather simple structure cf.~\cite[Props.~1 and
4]{Bea83}. In order to characterise the strongly stable pieces in the singular
case one would need a similar description which we formulate as the following
problem.

\begin{problem}[Forms on varieties with strongly stable tangent bundle]\label{probforms}
  Let $X$ be a normal projective variety of dimension $n > 1$ with $\omega_X
  \cong \sO_X$, having at worst canonical singularities. Assume that the tangent
  sheaf $\sT_X$ is strongly stable. Then show that the following holds.
  \begin{enumerate}
  \item\label{il:bonny} For all odd numbers $q \ne n$, we have $H^0
    \bigl(\wtilde X,\, \Omega^{[q]}_{\wtilde X} \bigr) = 0$ for all finite
    covers $f: \wtilde X \to X$, étale in codimension one.
  \item\label{il:clyde} If there exists a finite cover $g: X' \to X$, étale in
    codimension one, and an even number $0 < q < n$ such that $H^0
    \bigl(X',\,\Omega^{[q]}_{X'} \bigr) \not = 0$, then there exists a reflexive
    $2$-form $\sigma' \in H^0\bigl(X', \Omega_{X'}^{[2]} \bigr)$, symplectic on
    the smooth locus $X'_{\reg}$, such that for any finite cover $f: \wtilde X
    \to X'$, étale in codimension one, the exterior algebra of global reflexive
    forms on $\wtilde X$ is generated by $f^*(\sigma')$. In other words,
    $$
    \bigoplus_p H^0\Bigl(\wtilde X, \Omega_{\wtilde X}^{[p]} \Bigr) = \bC \bigl[
    f^*(\sigma) \bigr].
    $$
  \end{enumerate}
\end{problem}

\begin{rem}\label{rem:stableqvanishes}
  Notice that the assumptions on the strong stability of $\sT_X$ and on the
  dimension of $X$ automatically imply $\wtilde q(X) = 0$.
\end{rem}

As we will will discuss in more detail in the subsequent
Section~\ref{sect:singularclassification}, a positive solution to
Problem~\ref{probforms} leads to a characterisation of canonical varieties with
trivial canonical class and strongly stable tangent bundle as singular analogues
of Calabi-Yau or irreducible holomorphic symplectic manifolds. There are a
number of cases where Problem~\ref{probforms} can be solved. We conclude
Section~\ref{ssec:aodf} with two propositions that provide evidence by
discussing the case where $X$ is smooth, or of dimension $\leq 5$, respectively.

\begin{prop}\label{prop:smoothProblem}
  The claims of Problem~\ref{probforms} hold if $X$ is smooth.
\end{prop}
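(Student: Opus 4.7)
The plan is to reduce to the classical Beauville-Bogomolov Decomposition Theorem~\ref{bb} and then import the known Hodge-theoretic information for the irreducible CY and irreducible holomorphic-symplectic factors. First I would set up: since $X$ is smooth, the Zariski-Nagata purity of the branch locus ensures that every finite cover $\wtilde X \to X$ that is étale in codimension one is automatically étale, and so $\wtilde X$ is again smooth. Combined with Remark~\ref{rem:stableqvanishes}, strong stability of $\sT_X$ gives $\wtilde q(X) = 0$, so no finite étale cover of $X$ has a non-trivial Albanese.

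Next I would apply Theorem~\ref{bb} to $X$: there exists a finite étale cover $\pi : Y \to X$ with $Y \cong T \times \prod X_\nu$. Strong stability of $\sT_X$ implies that $\sT_Y = \pi^*\sT_X$ is stable with respect to any polarisation, hence indecomposable, so the product decomposition has a single factor. Vanishing of $q(Y)$ rules out the torus factor. Therefore $Y$ is either an irreducible Calabi-Yau manifold or an irreducible holomorphic-symplectic manifold; in both cases $Y$ is simply connected. Because $Y$ is simply connected, for any finite étale cover $\wtilde X \to X$ every connected component of $Y \times_X \wtilde X$ is isomorphic to $Y$; picking one component gives a Galois étale cover $Y \to \wtilde X$ with group $G_{\wtilde X} := \pi_1(\wtilde X)$, so that
$$
H^0\bigl(\wtilde X,\, \Omega^q_{\wtilde X}\bigr) \;\cong\; H^0\bigl(Y,\, \Omega^q_Y\bigr)^{G_{\wtilde X}}
$$
for all $q$ and all such covers $\wtilde X$.

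Now the classical computation of Hodge numbers on simply connected CY and IHS manifolds (Beauville, \cite{Bea83}, Props.~1 and 4) does the rest. In the CY case one has $H^0(Y, \Omega^q_Y) = 0$ for $0 < q < n$, so by taking $G_{\wtilde X}$-invariants the same vanishing holds on every finite étale cover of $X$. Part~\iref{il:bonny} follows, and the hypothesis of part~\iref{il:clyde} cannot be satisfied. In the IHS case the exterior algebra is $\bigoplus_q H^0(Y, \Omega^q_Y) = \bC[\sigma_Y]$ for a symplectic form $\sigma_Y \in H^0(Y, \Omega^2_Y)$, which vanishes in odd degrees; by taking invariants, part~\iref{il:bonny} again follows. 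For part~\iref{il:clyde}, assume some cover $X' \to X$ admits a non-trivial reflexive $q$-form for even $0 < q < n$. Then $G_{X'} = \pi_1(X')$ acts on $\bC \sigma_Y$ by a character $\chi$ satisfying $\chi^{q/2} = 1$; replacing $X'$ by the intermediate étale cover $X'' := Y / \ker\chi|_{G_{X'}}$ makes $\sigma_Y$ descend to a symplectic reflexive $2$-form $\sigma'$ on $X''$. For any further étale cover $f: \wtilde X \to X''$ we have $G_{\wtilde X} \subseteq \ker \chi$, so invariants on $\wtilde X$ are spanned by powers of $f^*\sigma'$, giving the asserted algebra generation.

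The only delicate point is the passage to $X''$ in part~\iref{il:clyde}: the literal statement requires the $2$-form to live on the same $X'$ furnished by the hypothesis, but the character $\chi$ on $\bC\sigma_Y$ need only satisfy $\chi^{q/2} = 1$ on $\pi_1(X')$, so $\chi$ itself may be non-trivial; one must therefore either enlarge $X'$ to the intermediate cover on which $\chi$ is trivial, or read the statement as allowing such a replacement. Apart from this bookkeeping issue, the proof in the smooth case is essentially a direct translation of the classical Beauville-Bogomolov picture together with the invariance of global forms under deck transformations.
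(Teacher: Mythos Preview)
Your setup and the treatment of part~(\ref{probforms}.\ref{il:bonny}) coincide with the paper's argument: purity makes all covers étale, strong stability forces the Beauville--Bogomolov cover to be irreducible Calabi--Yau or irreducible holomorphic-symplectic, and taking invariants of odd-degree forms gives the vanishing.

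The gap you yourself flag in part~(\ref{probforms}.\ref{il:clyde}) is, however, genuine and not mere bookkeeping. The statement requires the symplectic $2$-form to live on the \emph{given} cover $X'$, and your observation that the deck group $G_{X'}$ acts on $\bC\sigma_Y$ by a character $\chi$ with $\chi^{q/2}=1$ does not force $\chi=1$. Concretely, if $\dim X = 2k$ and the hypothesis supplies a $q$-form with $0<q<2k$ even, you only obtain that the order of $\chi$ divides $\gcd(k,\,q/2)$; for instance with $k=4$ and $q=4$ a character of order~$2$ is not excluded, so $\sigma_Y^2$ could descend to $X'$ while $\sigma_Y$ does not. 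Passing to the intermediate cover $X''$ is not permitted by the statement.

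The paper closes this gap by a different, sharper route: it shows that $X$ is already simply connected, so that $X=X'=Y$ and the issue evaporates. This is the content of Lemma~\ref{lem:alreadysymplectic}, which is not elementary: one views $X$ as an Enriques manifold in the sense of Oguiso--Schr\"oer \cite{OguisoSchroeer} and combines their divisibility constraints $d\mid(k+1)$ and, using the triviality of $\omega_X$, $d\mid k$ to force the degree $d$ of the universal cover to equal~$1$. Your argument is missing precisely this input; without it, part~(\ref{probforms}.\ref{il:clyde}) is not established as stated.
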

\begin{proof}
  Note that on a smooth variety $X$ the sheaves $\Omega_X^p$ and
  $\Omega_X^{[p]}$ coincide and that any finite cover of $X$ that is étale in
  codimension one is actually étale by purity of the branch locus.

  Let now $X$ be a smooth projective variety of dimension $n$ with $\omega_X
  \cong \sO_X$.  Assume that the tangent bundle $\sT_X$ is strongly stable. As
  noticed in Remark~\ref{rem:stableqvanishes}, this implies that $\wtilde q (X)
  = 0$. Consequently, the fundamental group of $X$ is finite by
  \cite[Thm.~2(2)]{Bea83}. Let $\what X \to X$ be the universal cover. Since
  $\sT_X$ is strongly stable, $\sT_{\what X}$ is stable with respect to any
  polarisation, and $\what X$ is hence irreducible in the sense of the
  Beauville-Bogomolov decomposition Theorem~\ref{bb}. As $\wtilde q (X) = 0$,
  the manifold $\what X$ is therefore either Calabi-Yau or irreducible
  holomorphic-symplectic.

  In order to show (\ref{probforms}.\ref{il:bonny}), pulling back forms from any
  étale cover $\wtilde X \to X$ to the universal cover $\what X$ if necessary,
  it suffices to note that both in the Calabi--Yau and in the irreducible
  holomorphic-symplectic case, $\what X$ does not support differential forms of
  odd degree $p<n$ by \cite[Props.~1 and 3]{Bea83}.

  To show (\ref{probforms}.\ref{il:clyde}), let $X' \to X$ be any étale cover,
  and assume that there exists a non-vanishing form such that that $\sigma' \in
  H^0 \bigl( X',\,\Omega^{[q]}_{X'} \bigr)$ for some even number $0 < q <
  n$. Pulling back $\sigma'$ to the universal cover $\what X$, we see that
  $\what X$ cannot be Calabi--Yau and is therefore irreducible
  holomorphic-symplectic, say with symplectic form $\what \sigma$. Consequently,
  \cite[Prop.~3]{Bea83} implies that the algebra of differential forms on $\what
  X$ is generated by $\what \sigma$. Hence, in order to establish the claim it
  therefore suffices to show that $\what X$ is biholomorphic to $X$ and
  therefore also to $X'$. In other words, we need to show that $X$ is already
  simply-connected. This is done in Lemma~\ref{lem:alreadysymplectic} below.
\end{proof}

We are grateful to Keiji Oguiso for pointing us towards \cite{OguisoSchroeer}
and for explaining the following observation to us.

\begin{lem}\label{lem:alreadysymplectic}
  Let $X$ be a projective manifold whose universal cover is an irreducible
  holomorphic-symplectic manifold. If the canonical bundle of $X$ is trivial,
  $\omega_X \cong \sO_X$, then $X$ is simply-connected, and therefore itself
  irreducible holomorphic-symplectic.
\end{lem}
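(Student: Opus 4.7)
The plan is to apply the holomorphic Lefschetz fixed-point formula to the non-trivial deck transformations of the universal cover. Write $\pi : \wh X \to X$ for the universal cover and $G := \pi_1(X)$, and fix a generator $\sigma$ of the one-dimensional space $H^0(\wh X, \Omega^2_{\wh X})$. Since $\wh X$ is compact (it is irreducible holomorphic-symplectic) and $X$ is a compact manifold, $G$ is finite and acts freely on $\wh X$. Because $G$ preserves the line $\bC \sigma$, there is a character $\chi : G \to \bC^*$ with $g^*\sigma = \chi(g)\,\sigma$ for every $g \in G$.

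First I would show that $\chi(g)^n = 1$ for all $g \in G$, where $2n = \dim \wh X$. Indeed, Beauville's description of the exterior algebra on an IHS manifold says that $\sigma^n$ is a nowhere-vanishing section of $\omega_{\wh X}$. The hypothesis $\omega_X \cong \sO_X$ produces a nowhere-vanishing section of $\omega_X$, whose pull-back is a $G$-invariant nowhere-vanishing section of $\omega_{\wh X}$, necessarily a scalar multiple of $\sigma^n$. Hence $g^*(\sigma^n) = \sigma^n$ and $\chi(g)^n = 1$.

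Second, for any $g \in G \setminus \{e\}$, the action of $g$ on $\wh X$ has no fixed points, so the holomorphic Lefschetz fixed-point formula gives
$$
0 \;=\; \sum_{p=0}^{2n} (-1)^p \,\operatorname{tr}\!\bigl( g^* \mid H^p(\wh X, \sO_{\wh X})\bigr).
$$
For an IHS manifold, the Dolbeault isomorphism and Hodge symmetry (complex conjugation) identify $H^p(\wh X, \sO_{\wh X})$ with $\overline{H^{p,0}(\wh X)}$, which vanishes for odd $p$ and is spanned by $\bar\sigma^k$ for $p = 2k \leq 2n$; on this line $g^*$ acts by $\overline{\chi(g)}^k$. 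Setting $\eta := \overline{\chi(g)}$, the formula reduces to $\sum_{k=0}^n \eta^k = 0$.

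Finally, the identity $(\eta-1)\sum_{k=0}^n \eta^k = \eta^{n+1} - 1$ forces $\eta \ne 1$ and $\eta^{n+1} = 1$; in particular $\chi(g)^{n+1} = 1$ and $\chi(g) \ne 1$. Combined with $\chi(g)^n = 1$ from the first step, we obtain $\chi(g) = \chi(g)^{n+1} \cdot \chi(g)^{-n} = 1$, a contradiction. Therefore $G = \{e\}$, so $X$ is simply-connected, and hence itself irreducible holomorphic-symplectic. The only delicate point in the argument is the identification of the $G$-action on $H^{0,2k}(\wh X)$ as multiplication by $\overline{\chi(g)}^k$; everything else is a direct combination of the IHS condition and Atiyah--Bott's holomorphic fixed-point formula.
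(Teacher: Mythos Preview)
Your argument is correct. The identification of the $g^*$-action on $H^{0,2k}(\wh X)$ is fine: conjugation $H^{p,0}\to H^{0,p}$ is conjugate-linear and commutes with the real operator $g^*$, so an eigenvalue $\chi(g)^k$ on $H^{2k,0}$ becomes $\overline{\chi(g)}^{\,k}$ on $H^{0,2k}$, exactly as you say. The rest is a clean application of Atiyah--Bott together with the description of $H^{\bullet,0}$ on an IHS manifold.

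The paper's proof is different in presentation: it observes that $X$ is an \emph{Enriques manifold} in the sense of Oguiso--Schr\"oer and then quotes two of their results, namely that the covering degree $d$ divides $k+1$ (where $\dim X = 2k$), and that triviality of $\omega_X$ forces $d \mid k$; hence $d=1$. Your proof is in effect a self-contained derivation of the numerical constraints underlying those two citations: the Lefschetz computation gives $\chi(g)^{n+1}=1$ with $\chi(g)\ne 1$ for every $g\ne e$ (this is the engine behind $d\mid k+1$ and the cyclicity of $G$ in \cite{OguisoSchroeer}), while the triviality of $\omega_X$ gives $\chi(g)^n=1$ (the content of $d\mid k$). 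So the two approaches rest on the same mechanism; yours is more elementary and avoids the external reference, at the cost of a few more lines, while the paper's version is shorter but opaque unless one unpacks \cite{OguisoSchroeer}.
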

\begin{proof}
  The assumptions on $X$ imply that $X$ is an \emph{Enriques manifold} in the
  sense of Oguiso and Schröer~\cite{OguisoSchroeer}, see also
  \cite{BoissiereNieperSarti}. Since the canonical bundle of $X$ is trivial, and
  since the universal cover of $X$ is irreducible holomorphic-symplectic, the
  fundamental group of $X$ is finite, cf.~\cite[Thm.~2(2)]{Bea83}. Let $d$
  denote the degree of the universal covering map $\what X \to X$, and set $\dim
  X = \dim \what X = n = 2k$. It then follows from
  \cite[Prop.~2.4]{OguisoSchroeer} that $d\, \vert\, (k+1)$. Moreover, since $X$
  is assumed to have trivial canonical bundle, \cite[Prop.~2.6]{OguisoSchroeer}
  implies that additionally $d\, \vert\, k$. Consequently, we have $d =1$, which
  proves the claim.
\end{proof}

\begin{prop}
  The claims of Problem~\ref{probforms} hold if $\dim X \leq 5$.
\end{prop}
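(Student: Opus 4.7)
The plan is to proceed by a case analysis on $n := \dim X \in \{2,3,4,5\}$, using three observations that apply uniformly to every cover $\wtilde X \to X$ étale in codimension one. First, strong stability is inherited by $\wtilde X$, since any further étale-in-codimension-one cover of $\wtilde X$ is also one of $X$; hence Corollaries~\ref{cor:max1}, \ref{cor:2formissymplectic-1}, \ref{cor:2formissymplectic-2} and~\ref{cor:64} all apply to $\wtilde X$. Second, Remark~\ref{rem:stableqvanishes} yields $\wtilde q(\wtilde X) = 0$, and $\omega_{\wtilde X} \cong \sO_{\wtilde X}$ follows from $\omega_X \cong \sO_X$ together with étaleness in codimension one. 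Combining these I extract three working facts: (a) $h^0(\wtilde X, \Omega^{[1]}_{\wtilde X}) = h^0(\wtilde X, \Omega^{[n-1]}_{\wtilde X}) = 0$ by Corollary~\ref{cor:max1}; (b) Corollary~\ref{cor:64} provides non-degenerate wedge pairings identifying $h^0(\wtilde X, \Omega^{[p]}_{\wtilde X})$ with $h^0(\wtilde X, \Omega^{[n-p]}_{\wtilde X})$ for every $p$; and (c) Corollaries~\ref{cor:2formissymplectic-1} and~\ref{cor:2formissymplectic-2} give $h^0(\wtilde X, \Omega^{[2]}_{\wtilde X}) \leq 1$, with any non-zero reflexive $2$-form symplectic on $\wtilde X_{\reg}$ and forcing $n$ to be even.

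For part~(\ref{probforms}.\ref{il:bonny}), the odd $q$ with $1 \leq q < n$ are $q = 1$ (and its dual $q = n-1$) in every dimension, plus $q = 3$ when $n = 5$. The values $q \in \{1, n-1\}$ vanish by (a), which settles $n \in \{2, 3, 4\}$ entirely. In the remaining case $n = 5$, $q = 3$, pairing (b) gives $h^0(\wtilde X, \Omega^{[3]}_{\wtilde X}) = h^0(\wtilde X, \Omega^{[2]}_{\wtilde X})$, which vanishes by (c) since $5$ is odd.

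For part~(\ref{probforms}.\ref{il:clyde}), I would first observe that the hypothesis is vacuous except when $n = 4$: for $n = 2$ no admissible $q$ exists, while for odd $n \in \{3, 5\}$ the only even $q$ with $0 < q < n$ are $q = 2$ (ruled out by the parity statement in (c)) and, when $n = 5$, also $q = 4$ (identified by (b) with $q = 1$, which vanishes by (a)). The only genuine case is therefore $n = 4$, $q = 2$: if the hypothesis holds, the given $\sigma' \in H^0(X', \Omega^{[2]}_{X'})$ is symplectic on $X'_{\reg}$ by (c); for any further cover $f: \wtilde X \to X'$ étale in codimension one, the pullback $\wtilde \sigma := f^*(\sigma')$ is non-zero on $\wtilde X_{\reg}$ and symplectic there. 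Then (a) gives $h^0(\wtilde X, \Omega^{[1]}_{\wtilde X}) = h^0(\wtilde X, \Omega^{[3]}_{\wtilde X}) = 0$, (c) gives $H^0(\wtilde X, \Omega^{[2]}_{\wtilde X}) = \bC \cdot \wtilde \sigma$, and $H^0(\wtilde X, \Omega^{[4]}_{\wtilde X}) = H^0(\wtilde X, \omega_{\wtilde X}) \cong \bC$ is spanned by $\wtilde \sigma \wedge \wtilde \sigma$, which is nowhere zero by symplecticity. Hence $\bigoplus_p H^0(\wtilde X, \Omega^{[p]}_{\wtilde X}) = \bC[\wtilde \sigma]$, as required.

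In this range of dimensions there is no genuine obstacle: every case is forced by the Hodge-type pairing of Corollary~\ref{cor:64} combined with the rigid behaviour of reflexive $2$-forms supplied by Corollaries~\ref{cor:2formissymplectic-1} and~\ref{cor:2formissymplectic-2}. The real difficulty appears only from $n = 6$ onwards, where new even degrees $q \in \{4, 6, \ldots\}$ are not paired with $q \in \{1, 2\}$ by Corollary~\ref{cor:64} and are therefore not directly controlled by the results available here; ruling out odd-degree reflexive forms of intermediate degree (for instance $q = 5$ in $n = 7$) would then seem to require additional input, such as a singular analogue of Bogomolov's description of the exterior algebra of parallel tensors on manifolds with holonomy $\mathrm{SU}$ or $\mathrm{Sp}$.
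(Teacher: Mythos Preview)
Your proof is correct and follows essentially the same approach as the paper: both arguments combine Corollary~\ref{cor:max1} for degrees $1$ and $n-1$, the duality of Corollary~\ref{cor:64} to pair complementary degrees, and Corollaries~\ref{cor:2formissymplectic-1} and~\ref{cor:2formissymplectic-2} to control reflexive $2$-forms. Your write-up is in fact somewhat more thorough than the paper's --- you treat $n=2$ explicitly (the paper silently omits it) and you spell out in full why the exterior algebra on $\wtilde X$ is generated by $f^*(\sigma')$ when $n=4$, whereas the paper simply invokes the two corollaries and leaves the verification to the reader.
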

\begin{proof}
  Let $X$ be a projective variety of dimension greater than one, having at worst
  canonical singularities. Assume that $X$ has a trivial canonical bundle,
  $\omega_X \cong \sO_X$, and a strongly stable tangent sheaf $\sT_X$.  Again we
  have $\wtilde q(X) = 0$, since $\sT_X$ is strongly stable. Fix a finite cover
  $\wtilde X \to X$, étale in codimension one.

  If $\dim X = 3$, then Corollary~\ref{cor:max1} immediately implies that $h^0
  \bigl(\wtilde X, \, \Omega_{\wtilde X}^{[1]} \bigr) = h^0 \bigl( \wtilde X, \,
  \Omega_{\wtilde X}^{[2]} \bigr) =
  0$. Conditions~(\ref{probforms}.\ref{il:bonny}) and
  (\ref{probforms}.\ref{il:clyde}) of Problem~\ref{probforms} are therefore
  satisfied.

  Now assume that $\dim X = 4$. In this setting, Corollary~\ref{cor:max1} gives
  that $h^0 \bigl(\wtilde X, \, \Omega_{\wtilde X}^{[1]} \bigr) = h^0 \bigl(
  \wtilde X, \, \Omega_{\wtilde X}^{[3]} \bigr) = 0$. The claims of
  Problem~\ref{probforms} thus follow from
  Corollary~\ref{cor:2formissymplectic-2} and from the fact that
  $h^0\bigl(\wtilde X,\, \Omega^{[2]}_{\wtilde X}\bigr) \leq 1$, as shown in
  Corollary~\ref{cor:2formissymplectic-1}.

  It remains to consider the case where $\dim X = 5$, where
  Corollary~\ref{cor:max1} asserts that $h^0 \bigl(\wtilde X, \, \Omega_{\wtilde
    X}^{[1]} \bigr) = h^0 \bigl( \wtilde X, \, \Omega_{\wtilde X}^{[4]} \bigr) =
  0$. The claims of Problem~\ref{probforms} will follow once we show that $h^0
  \bigl(\wtilde X, \, \Omega_{\wtilde X}^{[2]} \bigr)$ and $h^0 \bigl( \wtilde
  X, \, \Omega_{\wtilde X}^{[3]} \bigr)$ vanish as well. For that, recall from
  item (\ref{cor:64}.\ref{il:trout}) of Corollary~\ref{cor:64} that there exists
  a non-trivial $3$-form on $\widetilde X$ if and only if there exists a
  non-trivial reflexive $2$-form on $\widetilde X$. However, by
  Corollary~\ref{cor:2formissymplectic-2} any non-trivial $2$-form would be
  non-degenerate, forcing $\dim X$ to be even, a contradiction.
\end{proof}

\subsubsection{Calabi-Yau and holomorphic-symplectic varieties}
\label{sect:singularclassification}

The following definition is motivated by the description of the exterior algebra
of Calabi-Yau manifolds and irreducible holomorphic-symplectic manifolds,
\cite[Props.~1 and 4]{Bea83}, and by the discussion of Problem~\ref{probforms}
in the previous section.

\begin{defn}[Calabi-Yau and symplectic varieties in the singular case]\label{def:CYSympl}
  Let $X$ be a normal projective variety with $\omega_X \cong \sO_X$, having at
  worst canonical singularities.
  \begin{enumerate}
  \item\label{il:CY} We call $X$ \emph{Calabi-Yau} if $H^0 \bigl(\wtilde X, \,
    \Omega_{X}^{[q]} \bigr) = 0$ for all numbers $0 < q < \dim X$ and all finite
    covers $\wtilde X \to X$, étale in codimension one.
  \item\label{il:symplectic} We call $X$ \emph{irreducible
      holomorphic-symplectic} if there exists a reflexive $2$-form $\sigma \in
    H^0\bigl(X, \Omega_{X}^{[2]} \bigr)$ such that $\sigma$ is everywhere
    non-degenerate on $X_{\reg}$, and such that for all finite covers $f:
    \wtilde X \to X$, étale in codimension one, the exterior algebra of global
    reflexive forms is generated by $f^*(\sigma)$.
  \end{enumerate}
\end{defn}

\begin{rem}[Augmented irregularity of Calabi-Yau and symplectic varieties]
  If $X$ is Calabi-Yau or irreducible holomorphic-symplectic in the sense of
  Definition~\ref{def:CYSympl}, it follows immediately that the augmented
  irregularity of $X$ vanishes, $\wtilde q(X) = 0$.
\end{rem}

\begin{rem}[Definition~(\ref{def:CYSympl}.\ref{il:CY}) for ``Calabi-Yau'' in the smooth case]
  By \cite[Sect.~3, Prop.~2]{Bea83} the conditions spelled out in
  (\ref{def:CYSympl}.\ref{il:CY}) are in the smooth case equivalent to the
  existence of a Kähler metric with holonomy $SU(m)$. If $X$ is smooth and
  Calabi-Yau in the sense of Definition~\ref{def:CYSympl}, then $X$ is not
  necessarily simply-connected, but may have finite fundamental group. If we
  assume additionally that $\dim X$ is even, then a simple computation with
  holomorphic Euler characteristics shows that $X$ is in fact simply-connected,
  cf.~\cite[Prop.~2 and Rem.]{Bea83}.
\end{rem}

\begin{rem}[Definition~(\ref{def:CYSympl}.\ref{il:symplectic}) for ``irreducible symplectic'' in the smooth case]
  If $X$ is smooth and irreducible holomorphic-symplectic in the sense of
  Definition~\ref{def:CYSympl}, then $X$ is simply-connected. In fact, even
  without the condition on the algebra of differential forms on étale covers, if
  $X$ is a holomorphic-symplectic manifold of complex dimension $2n$ such that
  $$
  H^{k,0}(X) \cong \left\{ 
    \begin{aligned}
      \bC & \,\,\,\, \text{ if $k$ is even} \\
      0 &  \,\,\,\, \text{ if $k$ is odd,}
    \end{aligned}
  \right.
  $$
  then $X$ is simply-connected, that is, $X$ is an irreducible
  holomorphic-symplectic manifold, see \cite[Prop.~A.1]{HuybrechtsNieper2011}.
\end{rem}

Assuming Problem~\ref{probforms} can be solved, the following two propositions
provide a classification of the strongly stable pieces in the conjectural
version of the Beauville-Bogomolov decomposition for the singular case.

\begin{prop}[Characterisation of strongly stable pieces, I]\label{prop:chsp1}
  Let $X$ be Calabi-Yau or irreducible holomorphic-symplectic in the sense of
  Definition~\ref{def:CYSympl}. Then $\sT_X$ is strongly stable in the sense of
  Definition~\ref{def:strongStab}.
\end{prop}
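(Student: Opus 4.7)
The plan is to argue by contradiction, assuming that $\sT_X$ is not strongly stable, and deriving the existence of a non-zero reflexive differential form that is incompatible with the defining properties of Definition~\ref{def:CYSympl}. By Definition~\ref{def:strongStab}, failure of strong stability means there is a finite cover $f \colon \wtilde X \to X$, étale in codimension one, and an ample polarisation on $\wtilde X$ with respect to which $\sT_{\wtilde X} \cong f^{[*]} \sT_X$ fails to be stable. First I observe that the defining conditions of Definition~\ref{def:CYSympl} are inherited by $\wtilde X$: any further cover of $\wtilde X$ that is étale in codimension one is again such a cover of $X$, so both the vanishing of reflexive forms (Calabi--Yau case) and the generation of the exterior algebra by the pull-back of $\sigma$ (irreducible holomorphic-symplectic case) continue to hold on $\wtilde X$. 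In particular $\omega_{\wtilde X} \cong \sO_{\wtilde X}$ and $\wtilde q(\wtilde X) = 0$. By Proposition~\ref{prop:semistable}, $\sT_{\wtilde X}$ is semistable, so any saturated destabilising subsheaf $0 \ne \sE \subsetneq \sT_{\wtilde X}$ has slope zero.

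Next, I apply the splitting Theorem~\ref{deco} to $\wtilde X$ and $\sE$. This yields a further finite cover $g \colon \wh X \to \wtilde X$, étale in codimension one, together with a direct sum decomposition $\sT_{\wh X} \cong \sE' \oplus \sF$, where $\sE' := g^{[*]} \sE$ and both $\det \sE' \cong \sO_{\wh X}$ and $\det \sF \cong \sO_{\wh X}$. Set $r := \rank \sE'$, which lies in the range $0 < r < n := \dim X$ because both summands are proper. The inclusion $\sO_{\wh X} \cong \det (\sE')^* \hookrightarrow \Omega^{[r]}_{\wh X}$ produces a non-zero global reflexive $r$-form $\eta \in H^0\bigl( \wh X,\, \Omega^{[r]}_{\wh X}\bigr)$. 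Wherever the decomposition $\sT_{\wh X} = \sE' \oplus \sF$ is locally free, the form $\eta$ is decomposable and its contraction $i_\eta \colon \sT_{\wh X} \to \Omega^{[r-1]}_{\wh X}$ has kernel precisely equal to $\sF$. In particular, $\ker i_\eta$ has positive rank $n-r$ on a big open subset of $\wh X$.

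The final step is to derive a contradiction in each case. If $X$ is Calabi--Yau, then $H^0\bigl(\wh X,\, \Omega^{[r]}_{\wh X}\bigr) = 0$ for $0 < r < n$, which immediately contradicts $\eta \ne 0$. If $X$ is irreducible holomorphic-symplectic with symplectic form $\sigma$, then the pull-back $\tau := (f \circ g)^{[*]} \sigma$ generates the exterior algebra of reflexive forms on $\wh X$, so $\eta = \lambda \cdot \tau^k$ for some $\lambda \in \bC^*$ and some integer $k$, forcing $r = 2k$. On the big open subset $V \subseteq \wh X_{\reg}$ where $f \circ g$ is étale, the form $\tau$ is the pull-back of a non-degenerate $2$-form via an étale morphism between smooth varieties, hence itself non-degenerate. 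A standard linear-algebra argument (or Hard Lefschetz applied to $\sigma^{k-1} \wedge \cdot$) then shows that the contraction $i_{\tau^k}$ has trivial kernel at every point of $V$ as long as $2k < n$; this contradicts the non-vanishing of $\ker i_\eta = \sF$ on $V$. I expect the main conceptual step to be the application of Theorem~\ref{deco}, which is what bridges the gap between the polarisation-dependent property of stability and the polarisation-independent statements about reflexive differential forms built into Definition~\ref{def:CYSympl}; the remaining symplectic linear algebra is routine, but needs the reduction to a cover where $\det \sE'$ is actually trivial (not just numerically trivial) in order to produce the form $\eta$.
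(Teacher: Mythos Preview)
Your proof is correct and follows essentially the same route as the paper: argue by contradiction, pass to a cover where the tangent sheaf is unstable, invoke Theorem~\ref{deco} to obtain a further cover with a splitting $\sT_{\what X} \cong \sE' \oplus \sF$ and trivial determinants, and then read off a non-zero reflexive $r$-form $\eta$ with $0 < r < n$ whose degeneracy subsheaf contains $\sF$. The paper dispatches the symplectic case in one sentence (``$\sF \subseteq S_\eta$, so $\eta$ cannot be a wedge power of a symplectic form''), whereas you spell out the underlying linear-algebra fact that $i_v(\tau^k) = k\,(i_v\tau)\wedge\tau^{k-1} \ne 0$ for $v \ne 0$ and $k < \tfrac{n}{2}$; this is indeed the symplectic Lefschetz property on $\bigwedge^\bullet V^*$ rather than Hodge-theoretic Hard Lefschetz, but the argument is sound.
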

\begin{proof}
  Let $X$ be Calabi-Yau or irreducible symplectic. We argue by contradiction and
  assume that there exists a finite cover $g: \wtilde X \to X$, étale in
  codimension one, and ample Cartier divisors $\wtilde H_1, \ldots \wtilde
  H_{n-1}$ on $\wtilde X$ such that the tangent sheaf $\sT_{\wtilde X}$ is not
  stable with respect to the $\wtilde H_i$. In this setting, Theorem~\ref{deco}
  asserts that there exists a further finite cover $h: \what X \to \wtilde X$
  and a proper decomposition
  \begin{equation}\label{eq:dCC}
    \sT_{\what X} \cong  \sE \oplus \sF.    
  \end{equation}
  with $\det \sE \cong \sO_{\what X}$. Setting $r := \rank \sE$, the
  splitting~\eqref{eq:dCC} immediately gives an embedding $\sO_{\what X} \cong
  \det \sE^* \into \Omega^{[r]}_{\what X}$, and an associated form $\tau \in H^0
  \bigl( X,\, \Omega^{[r]}_{\what X} \bigr)$. Since $0 < r < \dim X$, it follows
  that $X$ cannot be Calabi-Yau.

  Since $\sF$ is contained in the degeneracy subsheaf $S_\tau$, as introduced in
  Definition~\ref{cont}, it is clear that $\tau$ cannot be a wedge-power of the
  pull-back of any symplectic form on $X$. This rules out that $X$ is
  irreducible holomorphic-symplectic in the sense of
  Definition~\ref{def:CYSympl}. We obtain a contradiction, which finishes the
  proof of Proposition~\ref{prop:chsp1}.
\end{proof}

A positive solution to Problem~\ref{probforms} would immediately give a partial
converse to Proposition~\ref{prop:chsp1}.

\begin{prop}[Characterisation of strongly stable pieces, II]
  Let $X$ be a normal projective variety with $\omega_X = \sO_X$ having at worst
  canonical singularities. Assume that $\sT_X$ is strongly stable. If the
  assertions of Problem~\ref{probforms} hold, then either
  \begin{enumerate}
  \item the semistable sheaf $\wedge^{[2]} \sT_X$ is strongly stable, and $X$ is
    Calabi-Yau, or
  \item there exists a finite cover $\wtilde X \to X$, étale in codimension one,
    such that the sheaf $\wedge^{[2]} \sT_{\wtilde X}$ is not $\wtilde H$-stable
    for some polarisation $\wtilde H$ on $\wtilde X$, and $\wtilde X$ is
    irreducible holomorphic-symplectic. \qed
  \end{enumerate}
\end{prop}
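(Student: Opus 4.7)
The plan is to argue by case distinction according to whether $\wedge^{[2]} \sT_X$ is strongly stable, using the assumed conclusions of Problem~\ref{probforms} to pass between stability of exterior powers of $\sT_X$ and the existence of reflexive differential forms on covers.

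For Case~(1), assume $\wedge^{[2]} \sT_X$ is strongly stable and argue $X$ is Calabi-Yau by contradiction. If some cover $X' \to X$ étale in codimension one admitted a non-trivial reflexive $q$-form with $0 < q < n := \dim X$, then Problem~\ref{probforms}(\ref{il:bonny}) forces $q$ to be even, and Problem~\ref{probforms}(\ref{il:clyde}) supplies a symplectic form $\sigma' \in H^0(X', \Omega^{[2]}_{X'})$ that is non-degenerate on $X'_{\reg}$. The induced isomorphism $\sT_{X'} \cong \Omega^{[1]}_{X'}$ of reflexive sheaves identifies $\wedge^{[2]} \sT_{X'}$ with $\Omega^{[2]}_{X'}$, carrying the Poisson bivector dual to $\sigma'$ to a nowhere-vanishing global section. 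For $n \geq 3$ this yields a saturated trivial subsheaf $\sO_{X'} \hookrightarrow \wedge^{[2]} \sT_{X'}$ of slope zero in an ambient sheaf of rank $\binom{n}{2} \geq 3$ and slope zero, destabilising it and contradicting strong stability of $\wedge^{[2]}\sT_X$. The case $n = 2$ is automatic, as Problem~\ref{probforms}(\ref{il:bonny}) already forces $H^0(\Omega^{[1]}) = 0$ on every cover.

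For Case~(2), assume $\wedge^{[2]} \sT_X$ is not strongly stable and fix a cover $Y \to X$ étale in codimension one together with a polarisation $\widetilde H$ such that $\wedge^{[2]} \sT_Y$ is not $\widetilde H$-stable. It suffices to construct a non-trivial reflexive two-form on some further cover $\widehat X$ of $Y$: Problem~\ref{probforms}(\ref{il:clyde}) then supplies a symplectic form on $\widehat X$ generating the reflexive exterior algebra of every subsequent cover, so $\widehat X$ is irreducible holomorphic-symplectic by Definition~\ref{def:CYSympl}(\ref{il:symplectic}), while the Case~(1) argument shows the resulting Poisson bivector destabilises $\wedge^{[2]} \sT_{\widehat X}$. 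To produce this form, combine strong stability of $\sT_Y$ with polystability of tensor powers of stable sheaves in characteristic zero—applied via restriction to general complete intersection curves as in Theorem~\ref{thm:MR}—to obtain a polystable decomposition $\wedge^{[2]} \sT_Y = \bigoplus \sE_i$ into stable summands of slope zero. For any summand $\sE$, the inclusion $\sE \hookrightarrow \wedge^{[2]}\sT_Y$ dualises to a surjection $\Omega^{[2]}_Y \twoheadrightarrow \sE^*$; composing the natural maps $(\Omega^{[1]}_Y)^{\otimes 2\rank \sE} \twoheadrightarrow \wedge^{[\rank \sE]} \Omega^{[2]}_Y \twoheadrightarrow \det \sE^*$ and invoking Propositions~\ref{prop:pseudoeffectivitygeneralised} and \ref{prop:angela} on a $\bQ$-factorialisation shows $\det \sE$ numerically trivial. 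Since $\widetilde q(Y) = 0$ by Remark~\ref{rem:stableqvanishes}, the determinant is torsion, and an index-one cover $\widehat X \to Y$ trivialises it. When the chosen summand has rank one, the resulting inclusion $\sO_{\widehat X} \hookrightarrow \wedge^{[2]}\sT_{\widehat X}$ is a nowhere-vanishing Poisson bivector whose skew adjoint $\Omega^{[1]}_{\widehat X} \to \sT_{\widehat X}$, by stability of source and target of equal rank and matching slope zero, must be an isomorphism; its inverse is the sought reflexive two-form.

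The chief difficulty is ensuring that, possibly after iterated passage to covers, some stable summand of the polystable decomposition actually has rank one. If every summand of every such decomposition on every cover had rank $\geq 2$, then each would carry no non-zero global section, and the construction above would fail to produce a two-form. I expect that the hypothesis that Problem~\ref{probforms} holds for $X$ rules out this pathology, since higher-rank stable summands of $\wedge^{[2]}\sT$ without a trivial component correspond representation-theoretically to exotic reduced holonomy groups (such as $G_2$ or $\mathrm{Spin}(7)$) whose algebras of invariant forms contain generators of degree different from two, contradicting the generation statement of Problem~\ref{probforms}(\ref{il:clyde}). Translating this representation-theoretic obstruction into the singular algebraic setting—perhaps by iteratively passing to Galois covers that further refine the polystable decomposition, or by a direct Schur-functor argument isolating the trivial factor—is the principal technical step on which the full proof relies.
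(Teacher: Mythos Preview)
Your Case~(1) argument is correct and is essentially what the paper has in mind: if $\wedge^{[2]}\sT_X$ is strongly stable but $X$ fails to be Calabi-Yau, Problem~\ref{probforms} manufactures a symplectic form on a cover, and the associated Poisson bivector gives a rank-one trivial subsheaf of $\wedge^{[2]}\sT$, contradicting strong stability.

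Your Case~(2), however, exposes a genuine gap --- one that the paper itself does not close, since the proposition is stated with a bare \qed. The implication you need is ``$\wedge^{[2]}\sT_X$ not strongly stable $\Rightarrow$ some cover carries a nontrivial reflexive form of degree $0<q<n$'' (equivalently, ``$X$ Calabi-Yau $\Rightarrow$ $\wedge^{[2]}\sT_X$ strongly stable''). This does \emph{not} follow from the assertions of Problem~\ref{probforms} as formulated: those assertions govern only the algebra $\bigoplus_p H^0\bigl(\Omega^{[p]}\bigr)$, not the stability of Schur functors of $\sT_X$. In the smooth case the implication holds for a reason outside Problem~\ref{probforms}: the holonomy of a Calabi-Yau manifold is $SU(n)$, and $\wedge^2$ of the standard representation of $SU(n)$ is irreducible, so $\wedge^{[2]}\sT_X$ is stable via the Kobayashi--Hitchin correspondence. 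The paper is tacitly relying on this picture, but no singular substitute is supplied.

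Your proposed workaround --- polystable decomposition of $\wedge^{[2]}\sT_Y$, numerically trivial determinants via Propositions~\ref{prop:pseudoeffectivitygeneralised} and~\ref{prop:angela}, index-one covers --- is sound as far as it goes, but it stalls exactly where you say: nothing forces a rank-one summand to appear. A section of $\wedge^{[r]}\bigl(\wedge^{[2]}\sT\bigr)$ for $r\geq 2$ lands in a Schur functor $S^{\lambda}\sT$ that need not be an exterior power, so it does not directly yield a reflexive form. Your holonomy heuristic about $G_2$ and $\mathrm{Spin}(7)$ is suggestive but not a proof in the singular algebraic setting. In short, you have correctly located the missing idea; the paper's \qed\ papers over the same point.
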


\begin{rem}\label{rem:smoothclassification}
  In the second case of the previous proposition one would of course rather like
  $X$ itself to be irreducible holomorphic-symplectic.  This is in fact true if
  $X$ is additionally assumed to be smooth: If $X$ is not Calabi--Yau, then the
  universal cover $\widetilde X$ of $X$ is irreducible
  holomorphic-symplectic. Since additionally the canonical bundle $\omega_X$ is
  assumed to be trivial, Lemma~\ref{lem:alreadysymplectic} implies that $X$
  itself is irreducible holomorphic-symplectic.
\end{rem}

\subsection{Fundamental groups of varieties with trivial canonical class}
\label{subsect:fundamentalgroups}

A Kähler manifold $X$ with trivial canonical class and vanishing augmented
irregularity $\wtilde q(X)$ has finite fundamental group, see
\cite[Thm.~1]{Bea83}. We believe that the same should hold for projective
varieties, in our singular setting. We show that this is true, at least under
the assumption that $\chi(X,\sO_X) \ne 0$.

\begin{prop}[Fundamental groups of canonical varieties with $K_X \equiv 0$, I]\label{prop:finitefundamental}
  Let $X$ be a normal projective variety with at worst canonical
  singularities. If $K_X$ is torsion and if $\chi(X,\sO_X) \ne 0$, then
  $\pi_1(X)$ is finite, of cardinality 
  $$
  |\pi_1(X)| \leq \frac{2^{n-1}}{|\chi(X, \sO_X)|}.
  $$
\end{prop}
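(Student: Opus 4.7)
The plan is to bound the order of every finite quotient of $\pi_1(X)$ uniformly by $2^{n-1}/|\chi(X, \sO_X)|$, from which finiteness of $\pi_1(X)$ together with the precise estimate will follow. For each finite-index normal subgroup $H \le \pi_1(X)$ of index $d$, I construct the associated finite cover $f : Y_H \to X$, étale in codimension one; since $Y_H$ is again normal projective with at worst canonical singularities and torsion canonical class, the same analysis applies to $Y_H$. The argument rests on three ingredients: (i) multiplicativity $\chi(Y_H, \sO_{Y_H}) = d \cdot \chi(X, \sO_X)$ of the holomorphic Euler characteristic; (ii) the vanishing $\wtilde q(Y_H) = 0$, a consequence of $\chi(X, \sO_X) \ne 0$; and (iii) a uniform a priori estimate $|\chi(Y_H, \sO_{Y_H})| \le 2^{n-1}$.

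For ingredient (i), first pass to the index-one cover to reduce to $\omega_X \cong \sO_X$. Multiplicativity then follows from $\chi(Y_H, \sO_{Y_H}) = \chi(X, f_* \sO_{Y_H})$ combined with a Riemann--Roch computation on resolutions: $f_* \sO_{Y_H}$ is reflexive of rank $d$, and since $f$ is étale in codimension one the rational Chern classes of $f_* \sO_{Y_H}$ on the étale locus vanish, so the expected correction terms to $d \cdot \chi(X, \sO_X)$ are absent; preservation of $\chi$ under resolutions of rational singularities handles the passage between $X$ and its desingularisation. Ingredient (ii) is then immediate: were $\wtilde q(Y_H) > 0$, Kawamata's Proposition~\ref{Kaw} together with Remark~\ref{rem:pollux} would produce a further cover of $Y_H$ of the form $F \times B$ with $B$ a positive-dimensional Abelian variety, on which $\chi(F \times B, \sO) = \chi(F, \sO_F) \cdot \chi(B, \sO_B) = 0$; by (i) this would force $\chi(X, \sO_X) = 0$, contradicting the hypothesis.

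The hard part will be (iii). I plan to combine Hodge duality for klt spaces (Proposition~\ref{prop:forms-2}) with the non-degenerate wedge pairing of Proposition~\ref{prop:forms-1} to deduce the symmetry $h^p(Y_H, \sO_{Y_H}) = h^0(Y_H, \Omega^{[p]}_{Y_H}) = h^0(Y_H, \Omega^{[n-p]}_{Y_H}) = h^{n-p}(Y_H, \sO_{Y_H})$, which in particular forces $\chi(Y_H, \sO_{Y_H}) = 0$ whenever $n$ is odd; thus I may assume $n$ is even. For even $n$, pass to a further cover $Z \to Y_H$ on which Corollary~\ref{cor:deco} yields a direct-sum decomposition $\sT_Z \cong \bigoplus_i \sE_i$ into strongly stable reflexive summands with trivial determinants. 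This induces a splitting of each $\Omega^{[p]}_Z$ into reflexive tensor products of wedge powers of the $\sE_i^*$, so $\chi(Z, \sO_Z)$ can be written as an alternating sum over multi-indices of multiplicities of $\sO_Z$ inside these tensor products. Strong stability of each $\sE_i$ together with triviality of $\det \sE_i$ should constrain these multiplicities sharply enough to reproduce, in reflexive form, the multiplicative Beauville bound $|\chi(Z, \sO_Z)| \le \prod_i 2^{\rank \sE_i - 1} \le 2^{n-1}$ familiar from the smooth Kähler setting. Transferring back from $Z$ to $Y_H$ via multiplicativity gives (iii), and the principal obstacle will be justifying this multiplicative reduction of $\chi(Z, \sO_Z)$ without a genuine product structure on the variety $Z$.
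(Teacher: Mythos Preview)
The paper's proof takes a completely different and much shorter route: pass to a resolution $\pi\colon \wtilde X \to X$, use rationality of canonical singularities to get $\chi(\wtilde X,\sO_{\wtilde X}) = \chi(X,\sO_X) \ne 0$, verify via Miyaoka's generic semipositivity (Theorem~\ref{miyaoka}) that the invariant $\kappa^+(\wtilde X) := \max\bigl\{\kappa(\det\sF) : \sF \subseteq \Omega^p_{\wtilde X}\text{ coherent, some }p\bigr\}$ vanishes, and then invoke Campana \cite[Cor.~5.3]{Ca95} as a black box to obtain finiteness of $\pi_1(\wtilde X)$ with the stated bound. Takayama's isomorphism $\pi_1(\wtilde X)\cong\pi_1(X)$ \cite{Takayama2003} finishes the argument.

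Your plan has a fatal structural gap at the very first step. A uniform bound on the orders of all finite quotients of a group $G$ does \emph{not} imply that $G$ is finite: any infinite simple group has no nontrivial finite quotients whatsoever. The missing hypothesis is residual finiteness of $\pi_1(X)$, and this is a well-known open problem for fundamental groups of projective (or K\"ahler) manifolds; passing to the smooth resolution via Takayama does not help, since the question is open there too.

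There are further problems downstream. Your ingredient (i) is false for covers that are merely étale in codimension one: for an abelian surface $A$ the quotient map $A \to A/\{\pm 1\}$ has degree $2$, yet $\chi(A,\sO_A)=0$ while $\chi(A/\{\pm 1\},\sO)=2$. So you cannot transfer the bound from $Z$ back to $Y_H$ in step (iii) by multiplicativity. And the bound $|\chi(Z,\sO_Z)| \le 2^{n-1}$ is itself not established: the Beauville-type estimate you invoke rests on the K\"unneth formula for an honest product of varieties, whereas a direct-sum splitting of $\sT_Z$ gives no control on the individual dimensions $h^0\bigl(Z,\wedge^{[k]}\sE_i^*\bigr)$. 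You correctly flag this as the principal obstacle, but no mechanism to overcome it is known in the singular setting --- which is exactly why the paper falls back on Campana's theorem applied to the resolution.
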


\begin{rem}
  If $X$ is smooth and $K_X$ is torsion, then then classical Beauville-Bogomolov
  Decomposition Theorem~\ref{bb} together with
  Proposition~\ref{prop:finitefundamental} shows that $\chi(X,\sO_X) \ne 0$
  implies $\wtilde q(X) = 0$.
\end{rem}

\begin{proof}[Proof of Proposition~\ref{prop:finitefundamental}]
  Set $n := \dim X$.  Let $\pi: \wtilde X \to X$ be a strong resolution of
  singularities. Recalling from \cite[Thm.~5.22]{KM98} that $X$ has rational
  singularities, we obtain that $\chi(\wtilde X,\sO_{\wtilde X}) = \chi(X,\sO_X)
  \ne 0$. Consider the invariant
  $$
  \kappa^+ \bigl(\wtilde X \bigr) := \max \left\{ \kappa \bigl( \det \sF \bigr)
    \, \bigl| \, \sF \text{ is a coherent subsheaf of $\Omega^p_{\wtilde X}$,
      for some $p$} \right\}.
  $$
  We are going to show that $\kappa^+ \bigl(\wtilde X \bigr) = 0$.  Using the
  assumption that $\chi(\wtilde X,\sO_{\wtilde X}) \ne 0$, Campana has then
  shown in \cite[Cor.~5.3]{Ca95} that $\pi_1(\wtilde X)$ is finite, of
  cardinality at most $2^{n-1} \cdot |\chi(\wtilde X, \sO_{\wtilde
    X})|^{-1}$. Since the natural map $\pi_1\bigl(\wtilde X \bigr) \to \pi_1(X)$
  is isomorphic by \cite[Thm.~1.1]{Takayama2003}, this implies that $\pi_1(X)$
  is likewise finite of the same cardinality.

  So let $0 \leq p \leq n$ be any number and let $\sF \subseteq
  \Omega^p_{\wtilde X}$ be a coherent subsheaf. As a subsheaf of a torsion-free
  sheaf, $\sF$ is itself torsion-free, and therefore locally free in codimension
  one. Next, let $C \subset X$ be a general complete intersection curve. Recall
  that the strong resolution map $\pi$ is isomorphic along $C$, and denote the
  preimage curve by $\wtilde C := \pi^{-1}(C)$. The restricted sheaves
  $\Omega^p_{\wtilde X}\bigl|_{\wtilde C}$ and $\sF|_{\wtilde C}$ are then both
  locally free.

  Since $K_X$ is torsion, the Kodaira-dimension of $X$ is zero,
  $\kappa(X)=0$. As $X$ has only canonical singularities, this shows that $X$ is
  not covered by rational curves. Miyaoka's Generic Semipositivity
  Theorem~\ref{miyaoka} therefore implies that $\Omega^q_{\wtilde X}|_{\wtilde
    C}$ is nef for all $q$. Better still, we have $\deg \Omega^n_{\wtilde
    X}|_{\wtilde C} = 0$, so that
  $$
  \bigl( \Omega^p_{\wtilde X}|_{\wtilde C} \bigr)^* \cong \wedge^p
  \sT_X|_{\wtilde C} \cong \Hom \Bigl(\Omega^n_{\wtilde X}|_{\wtilde C},\,
  \Omega^{n-p}_{\wtilde X}|_{\wtilde C} \Bigr) \cong \bigl( \Omega^n_{\wtilde
    X}|_{\wtilde C} \bigr)^* \otimes \Omega^{n-p}_{\wtilde X}|_{\wtilde C}
  $$
  is likewise as a nef vector bundle on the curve $\wtilde C$. Its quotient
  $\sF^*|_{\wtilde C}$ is then nef as well.  In summary, we obtain that $
  c_1(\sF) \cdot \wtilde C \leq 0 $.  Since the curves $\wtilde C$ are moving,
  this implies $\kappa (\det \sF) \leq 0$, and therefore $\kappa^+ \bigl(\wtilde
  X \bigr) \leq 0$. Since $\kappa \bigl(\wtilde X\bigr) = 0$, we obtain
  $\kappa^+ \bigl(\wtilde X\bigr) = 0$, as claimed. This finishes the proof of
  Proposition~\ref{prop:finitefundamental}.
\end{proof}

\begin{cor}[Fundamental groups of canonical varieties with $K_X \equiv 0$, II]\label{cor:almostLast}
  Let $X$ be a normal projective variety with at worst canonical singularities.
  Assume that $\dim X \leq 4$, and that the canonical divisor $K_X$ is
  numerically trivial. Then $\pi_1(X)$ is almost Abelian, that is, $\pi_1(X)$
  contains an Abelian subgroup of finite index.
\end{cor}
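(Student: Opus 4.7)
The plan is to reduce, via Corollary~\ref{cor:qgleichnull}, to the case of a canonical variety $Z$ of dimension at most four with $\omega_Z \cong \sO_Z$ and $\wtilde q(Z) = 0$, and then to apply Proposition~\ref{prop:finitefundamental} whenever the Euler characteristic $\chi(Z,\sO_Z)$ is non-zero. First I would apply Corollary~\ref{cor:qgleichnull} to produce a finite morphism $f : A \times Z \to X$, étale in codimension one, where $A$ is an Abelian variety and $Z$ is a normal projective variety with canonical singularities, trivial canonical sheaf, and vanishing augmented irregularity. After the standard identification of the topological fundamental group of a canonical space with that of a resolution (Takayama), the cover $f$ realises $\pi_1(A\times Z) \cong \pi_1(A)\times \pi_1(Z)$ as a finite-index subgroup of $\pi_1(X)$; since being almost Abelian is invariant under commensurability and $\pi_1(A) \cong \bZ^{2\dim A}$ is Abelian, it remains to show that $\pi_1(Z)$ is almost Abelian.

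Next I would compute $\chi(Z,\sO_Z) = \sum_p (-1)^p h^0(Z, \Omega^{[p]}_Z)$ by combining the Hodge-type isomorphism of Proposition~\ref{prop:forms-2} with the vanishings $h^0(Z, \Omega^{[1]}_Z) = q(Z) = 0$ and $h^0(Z, \Omega^{[\dim Z - 1]}_Z) = 0$ from Corollary~\ref{cor:max1}, together with the duality $h^0(Z, \Omega^{[p]}_Z) \cong h^0(Z, \Omega^{[\dim Z - p]}_Z)^*$ of Corollary~\ref{cor:64}. A direct computation then yields $\chi(Z,\sO_Z) = 2$ when $\dim Z = 2$ and $\chi(Z,\sO_Z) = 2 + h^0(Z, \Omega^{[2]}_Z) \geq 2$ when $\dim Z = 4$; in both cases Proposition~\ref{prop:finitefundamental} applies and gives finiteness of $\pi_1(Z)$, which is the desired conclusion in these dimensions.

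The main obstacle is the case $\dim Z = 3$, where the same computation only gives $\chi(Z,\sO_Z) = 1 - 0 + 0 - 1 = 0$ and Proposition~\ref{prop:finitefundamental} cannot be applied. Here my plan is to first apply Corollary~\ref{cor:deco} to obtain a further finite cover $\wtilde Z \to Z$, étale in codimension one, with a decomposition $\sT_{\wtilde Z} \cong \bigoplus \sE_i$ into strongly stable summands of trivial determinant; the vanishings of Corollary~\ref{cor:max1} and the duality of Corollary~\ref{cor:64} rule out any non-trivial proper summand of rank one or two, since such a summand would produce a non-zero reflexive one- or two-form on $\wtilde Z$ and contradict $\wtilde q(\wtilde Z) = 0$. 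Consequently $\sT_{\wtilde Z}$ is strongly stable and $\wtilde Z$ is a three-dimensional canonical Gorenstein Calabi--Yau variety with compound Du Val singularities by Reid's classification. A crepant projective resolution $Y \to \wtilde Z$ by a smooth Calabi--Yau threefold is then available, and $\wtilde q(Y) = 0$ is inherited from $\wtilde q(\wtilde Z) = 0$ via rationality of the singularities applied to every finite étale cover of $Y$; the classical Beauville--Bogomolov Theorem~\ref{bb} forces $Y$ to be an irreducible Calabi--Yau threefold, hence simply connected. A final application of Takayama's theorem yields $\pi_1(\wtilde Z) = \pi_1(Y) = 1$, and since $\wtilde Z \to Z$ is a finite cover, $\pi_1(Z)$ is finite. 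The essential difficulty is this dimension-three smooth-model reduction, which relies on classical decomposition results outside the paper's singular framework and is precisely what restricts the statement to $\dim X \leq 4$.
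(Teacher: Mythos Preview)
Your overall strategy---reduce via Corollary~\ref{cor:qgleichnull} to $A\times Z$ with $\wtilde q(Z)=0$, then show $\pi_1(Z)$ finite by checking $\chi(Z,\sO_Z)\ne 0$---matches the paper's, and your $\chi$-computations in dimensions $2$ and $4$ are correct. Two points deserve comment.

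First, a minor citation issue: the fact that $\pi_1(A\times Z)$ sits as a finite-index subgroup of $\pi_1(X)$ for a cover étale in codimension one is not Takayama's theorem (which concerns resolutions). The paper invokes \cite[Prop.~2.10(2)]{Kollar95s} and \cite[Prop.~1.3]{Campana91} for this step; Takayama's result is used elsewhere, to identify $\pi_1$ of a space with that of a resolution.

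The genuine gap is in your treatment of $\dim Z=3$. You assert that a canonical Gorenstein threefold with trivial $\omega$ admits a crepant projective resolution by a smooth Calabi--Yau threefold. This is false in general. Terminal Gorenstein threefold singularities (ordinary double points, for instance) do not admit crepant resolutions by smooth varieties: a small resolution of a node is crepant but typically not projective, while the blow-up of the node is projective but not crepant. What always exists is a crepant $\bQ$-factorial \emph{terminalisation}, whose output still has terminal singularities. Your subsequent appeal to the smooth Beauville--Bogomolov theorem therefore has no footing. (The inheritance $\wtilde q(Y)=0$ from $\wtilde q(\wtilde Z)=0$ is also delicate, since covers of $\wtilde Z$ étale in codimension one need not correspond to étale covers of $Y$; but this is moot once the resolution step fails.)

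The paper sidesteps all of this: for $\dim Z\le 3$ it simply cites \cite[4.17.3]{Kollar95s}, which already establishes that $\pi_1$ is almost Abelian in those dimensions. The new content in the paper's proof is precisely the $\dim Z=4$ case, which you handle correctly.
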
 
\begin{proof}
  Recall from \cite[4.17.3]{Kollar95s} that the statement of
  Corollary~\ref{cor:almostLast} is well-known if $\dim X \leq 3$. We will
  therefore assume for the remainder of the proof that $X$ is of dimension four.

  Let $f: \wtilde X \to X$ be the index-one cover associated with $K_X$. As we
  have noted before, $f$ is étale in codimension one, $\wtilde X$ has canonical
  singularities, and $\omega_{\wtilde X} = \sO_{\wtilde X}$, cf.~\cite[5.19 and
  5.20]{KM98}. The image of the natural map $\pi_1(\wtilde X) \to \pi_1(X)$ has
  finite index in $\pi_1(X)$, cf.~\cite[Prop.~2.10(2)]{Kollar95s} and
  \cite[Prop.~1.3]{Campana91}. Replacing $X$ by $\wtilde X$, if necessary, we
  may therefore assume without loss of generality that $\omega_X$ is
  trivial. Passing to a further cover, Corollary~\ref{cor:qgleichnull} even
  allows to assume that $X$ is of the form $X = A \times Z$, where $A$ is an
  Abelian variety, and $Y$ is normal projective variety with at worst canonical
  singularities, with trivial canonical class and vanishing augmented
  irregularity, $\omega_Z \cong \sO_Z$ and $\wtilde q(Z) =0$.

  If $\dim Z \leq 3$, then \cite[4.17.3]{Kollar95s} asserts that $\pi_1(Z)$ is
  almost Abelian. Since $\pi_1(A)$ is Abelian, this finishes the proof.

  It remains to consider that case where $\dim Z = 4$, that is, where $X=Z$ and
  $\wtilde q(Z)=0$. In this case, we finish proof by showing that the
  fundamental group of $X$ is finite.  Recall from Corollary~\ref{cor:max1} that
  $X$ does not carry any reflexive $1$-form or $3$-forms. Using
  Proposition~\ref{prop:forms-2} to relate $H^p\bigl(X,\, \sO_X\bigr)$ with the
  space of reflexive $p$-forms we see that $\chi(X,\mathcal O_X) > 0$, and we
  conclude by Proposition~\ref{prop:finitefundamental} that $\pi_1(X)$ is
  finite, thus finishing the proof of Corollary~\ref{cor:almostLast}.
\end{proof} 

\begin{rem}[Fundamental groups of smooth 4-folds with $\kappa=0$]
  If $X$ is a smooth projective $4$-fold with $\kappa (X) = 0$ admitting a good
  minimal model $X'$, then $\pi_1(X) = \pi_1(X')$ by
\cite[Thm.~1.1]{Takayama2003} or  \cite[Thm.~7.8.1]{Kollar93}. Consequently, $\pi_1(X)$ is almost Abelian.
\end{rem}

Assuming that the claims of Problem~\ref{probforms} hold, the following
corollary complements the results obtained in Section~\ref{sec:kawamata}, and in
particular the results of Corollary~\ref{cor:64}.

\begin{cor}[Fundamental groups of even-dim.\ $X$ with $\sT_X$ strongly stable]
  Let $X$ be a normal projective variety with $\omega_X \cong \sO_X$ having at
  worst canonical singularities. Suppose furthermore that $\dim X$ is even and
  that $\sT_X$ is strongly stable. If Problem~\ref{probforms} has a positive
  solution, then $\chi(X,\sO_X) > 0$ and $\pi_1(X)$ is finite. \qed
\end{cor}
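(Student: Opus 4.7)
The plan is to compute $\chi(X, \sO_X)$ using the Hodge-type duality of Proposition~\ref{prop:forms-2}, which for the klt pair $(X, 0)$ gives $h^p(X, \sO_X) = h^0\bigl(X, \Omega^{[p]}_X\bigr)$ for all $0 \leq p \leq n := \dim X$. Once positivity of the alternating sum is established, finiteness of $\pi_1(X)$ will follow immediately from Proposition~\ref{prop:finitefundamental}, since $\omega_X \cong \sO_X$ forces $K_X$ to be torsion. The key idea is to apply the assumed positive solution of Problem~\ref{probforms} \emph{directly to $X$}, taking the identity as the ``cover étale in codimension one''.

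From $\omega_X \cong \sO_X$ one reads off $h^0(X, \Omega^{[0]}_X) = h^0(X, \Omega^{[n]}_X) = 1$. For each odd $0 < q < n$, clause (\ref{probforms}.\ref{il:bonny}) of Problem~\ref{probforms}, applied with $\wtilde X = X$, yields $h^0(X, \Omega^{[q]}_X) = 0$. The remaining task is to count reflexive forms of even degree $0 < q < n$, for which I would distinguish two cases.

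If every such $h^0(X, \Omega^{[q]}_X)$ vanishes, then because $n$ is even,
$$
\chi(X, \sO_X) = 1 + (-1)^n = 2 > 0.
$$
Otherwise some even $0 < q < n$ satisfies $h^0(X, \Omega^{[q]}_X) \neq 0$. I would then apply clause (\ref{probforms}.\ref{il:clyde}) of Problem~\ref{probforms} first with $g = \Id_X$ and subsequently with $f = \Id_X$, obtaining a reflexive $2$-form $\sigma \in H^0(X, \Omega^{[2]}_X)$, symplectic on $X_{\reg}$, whose powers generate the entire algebra of reflexive forms on $X$:
$$
\bigoplus_{p=0}^{n} H^0\bigl(X, \Omega^{[p]}_X\bigr) = \bC[\sigma].
$$
This forces $h^0(X, \Omega^{[2k]}_X) = 1$ for $0 \leq k \leq n/2$ and $h^0(X, \Omega^{[2k+1]}_X) = 0$, so that $\chi(X, \sO_X) = n/2 + 1 > 0$. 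In either case $\chi(X, \sO_X) > 0$, and Proposition~\ref{prop:finitefundamental} concludes the argument.

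The only delicate point is the case analysis: the hypothesis of clause (\ref{probforms}.\ref{il:clyde}) is phrased for an \emph{arbitrary} cover $X' \to X$, and one must recognise that $X' = X$ with $g = \Id_X$ is a permitted instance, as is $\wtilde X = X$ with $f = \Id_X$ in its conclusion. That observation is what turns the abstract generation statement into the explicit dimension count required for the Euler characteristic. Beyond this, the argument is a bookkeeping exercise combining Proposition~\ref{prop:forms-2} with the finiteness criterion of Proposition~\ref{prop:finitefundamental}.
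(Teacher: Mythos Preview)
Your argument is correct and is exactly the computation the paper has in mind: the corollary is stated with a bare \qed, so the paper expects the reader to combine Proposition~\ref{prop:forms-2}, the two clauses of Problem~\ref{probforms} (applied with the identity cover), and Proposition~\ref{prop:finitefundamental} precisely as you do. Your observation that $\sigma^{k}$ is nonzero for $0 \leq k \leq n/2$ because $\sigma$ is symplectic on $X_{\reg}$ is the one detail worth making explicit, and you handle it correctly.
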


\providecommand{\bysame}{\leavevmode\hbox to3em{\hrulefill}\thinspace}
\providecommand{\MR}{\relax\ifhmode\unskip\space\fi MR }
\providecommand{\MRhref}[2]{%
  \href{http://www.ams.org/mathscinet-getitem?mr=#1}{#2}
}
\providecommand{\href}[2]{#2}

\end{document}